\documentclass[a4paper,11pt,reqno]{amsart}

\usepackage{color}
 \textheight24.8cm
 \textwidth16cm \hoffset=-1.9cm \voffset=-1.6cm
\usepackage{enumerate}
\usepackage{amssymb, amsmath}

\theoremstyle{plain}
\newtheorem{theorem}{Theorem}[section]
\newtheorem{corollary}[theorem]{Corollary}
\newtheorem{lemma}[theorem]{Lemma}
\newtheorem{proposition}[theorem]{Proposition}
\newtheorem{definition}[theorem]{Definition}

\newtheorem*{definition*}{Definition}

\theoremstyle{remark}
\newtheorem{remark}[theorem]{Remark}

\newtheorem*{claim*}{Claim}
\newtheorem*{remark*}{Remark}
\newtheorem*{example*}{Example}
\newtheorem*{notation*}{Notation}

\def\N{{\mathbb N}}
\def\Z{{\mathbb Z}}
\def\R{{\mathbb R}}

\def\D{{\mathcal D}}

\newcommand{\Hess}{\mathrm{Hess}}
\newcommand{\Ent}{\mathrm{Ent}}

\newcommand{\E}{\mathcal{E}}

\newcommand{\Ric}{\mathrm{Ric}}
\newcommand{\RIC}{\vartheta}
\newcommand{\ric}{\eta}

\newcommand{\length}{\mathrm{Len}}
\newcommand{\action}{\mathrm{Act}}
\newcommand{\speed}{\mathrm{Speed}}

\newcommand{\supp}{\mathrm{supp}}
\newcommand{\Pz}{\mathcal{P}}

\begin{document}

\title
{Remarks about Synthetic Upper Ricci Bounds for Metric Measure Spaces}

\author{Karl-Theodor Sturm}

\maketitle

\begin{abstract}
We discuss various characterizations of synthetic upper Ricci bounds for metric measure spaces in terms of heat flow, entropy and optimal transport.
In particular, we present a characterization in terms of semiconcavity of the entropy along certain Wasserstein geodesics which  is stable under convergence of mm-spaces. 
And we prove that a related characterization is equivalent to an asymptotic lower bound on the growth of the Wasserstein distance between heat flows.
For weighted Riemannian manifolds, the crucial result will be a precise uniform two-sided bound for 
\begin{eqnarray*}\frac{d}{dt}\Big|_{t=0}W\big(\hat P_t\delta_x,\hat P_t\delta_y\big)\end{eqnarray*}
in terms of the mean value of the Bakry-Emery Ricci tensor $\Ric+\Hess f$ along the minimizing geodesic from $x$ to $y$ and an explicit correction term depending on the bound for the curvature along this curve.
\end{abstract}

%
%


\section{Introduction and Statement of Main Results}

We discuss various characterizations of  synthetic upper Ricci bounds for metric measure spaces. All of them are equivalent for (smooth) weighted Riemannian manifolds; in general they might diverge from one another but at least some implications hold.
\footnote{
The author gratefully acknowledges  support by the German Research Foundation through the Hausdorff Center for Mathematics and the Collaborative Research Center 1060 
as well as support by the European Union through the ERC-AdG ``RicciBounds''.
}

Let us briefly recall the Lott-Sturm-Villani definition of synthetic lower bounds for the Ricci curvature \cite{SturmActa1, LV1}. It will be used as a guideline for our synthetic characterizations of upper bounds for the Ricci curvature.
 For infinitesimally Hilbertian spaces $(X,d,m)$, the synthetic lower bound CD$(K,\infty)$  is defined as the  $K$-convexity of the Boltzmann entropy $S=\Ent(\,.\,|m)$ on the geodesic space $(\Pz(X),W)$:  
\begin{eqnarray}\label{ent-mid}
2 S(\rho^{1/2})\le S(\rho^0)+S(\rho^1)-\frac K4 W^2(\rho^0,\rho^1)\end{eqnarray}
for each geodesic  $\big(\rho^a\big)_{a\in[0,1]}$ in $\Pz(X)$.
Moreover \cite{AGS15AnnProb}, it is equivalent  to the contraction property
\begin{eqnarray}
W(\hat P_t\mu,\hat P_t\nu)\le e^{-Kt}W(\mu,\nu)\qquad (\forall \mu,\nu, \forall t>0).
\end{eqnarray}
Here and in the sequel, 
$W$ will denote the 2-Kantorovich-Wasserstein distance induced by $d$ and 
 $(\hat P_t)_{t>0}$ will denote the dual heat semigroup on $\Pz(X)$ associated with the mm-space $(X,d,m)$.
See the next section for more details.

The synthetic upper Ricci bounds to be presented below will reverse these inequalities: instead of $K$-convexity we will request $\kappa$-concavity, instead of $K$-contraction we will request $\kappa$-expansion.
However, in the case of upper bounds, this $\kappa$-expansion property will be requested only asymptotically for short times and only for Dirac measures as initial distributions. Similarly, the $\kappa$-concavity property will be requested only for
$W$-geodesics with endpoints supported in arbitrarily small neighborhoods of given points. And in both cases, the starting points should be close to each other.

A central role will be played by the quantity 
\begin{eqnarray}\RIC^+(x,y)=-\liminf_{t\to0}\frac1t\log\Big(W\big(\hat P_t\delta_x,\hat P_t\delta_y\big)/d(x,y)\Big)\end{eqnarray}
which measures the negative exponential rate of expansion for the Kantorovich-Wasserstein distance of two heat flows starting in $x$ and $y$, resp.
It is closely related to the quantity
\begin{eqnarray*}\Theta_p^+(\gamma)=-\frac1p\liminf_{t\to0}\frac1t\log\Big(\action_p(\mu_t^\cdot)/\action_p(\mu_0^\cdot)
\Big)\end{eqnarray*}
which measures the negative exponential rate of expansion for the $p$-action (w.r.t.\ the Kantorovich-Wasserstein distance $W$) of the curve $a\mapsto\mu_t^a:=\hat P_t\delta_{\gamma^a}$ in $\Pz(X)$ for any given curve $a\mapsto\gamma^a$ in $X$, evolving in time according to the dual heat flow. Recall that the $p$-action for $p=1$ is the length of the curve. For curves of constant speed, the quantity $\Theta_p^+(\gamma)$ is non-increasing in $p$. 
Analogously -- with $\liminf$ replaced by $\limsup$ -- we define the quantities $\RIC^-(x,y)$ and  $\Theta_p^-(\gamma)$.

\begin{definition} 
We say that a number $K$ is a strong synthetic upper Ricci bound for the mm-space $(X,d,m)$  if $\RIC^*(x)\le K$ for all $x\in X$ 
where $$\RIC^*(x):=\limsup_{y,z\to x}\RIC^+(y,z).$$

We say that a number $K$ is a weak synthetic upper Ricci bound for the mm-space $(X,d,m)$  if  $\Theta_1^+(\gamma)\le K$ for each geodesic $\gamma^\cdot$ in $X$.
\end{definition}

\begin{proposition}
For every geodesic $(\gamma^a)_{a\in[0,1]}$
$$\Theta_p^+(\gamma^\cdot)\le \int_0^1 \RIC^*(\gamma^a)da.$$
\end{proposition}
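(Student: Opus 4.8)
The plan is to first reduce to $p=1$ and then to squeeze $\Theta^+_1(\gamma^\cdot)$ between a discrete quantity attached to partitions of $[0,1]$ and the integral on the right-hand side. Write $\mu_t^a:=\hat P_t\delta_{\gamma^a}$ (so $\mu_0^a=\delta_{\gamma^a}$) and $L:=d(\gamma^0,\gamma^1)$; we may assume $L>0$. Since $\gamma^\cdot$ is a geodesic it has constant speed, so by the monotonicity of $p\mapsto\Theta^+_p(\gamma^\cdot)$ recalled above it is enough to prove $\Theta^+_1(\gamma^\cdot)\le\int_0^1\RIC^*(\gamma^a)\,da$. I would deduce this from two facts: \emph{(a)} for every partition $0=a_0<\dots<a_N=1$ one has $\Theta^+_1(\gamma^\cdot)\le S(P):=\sum_{i=1}^N (a_i-a_{i-1})\,\RIC^+(\gamma^{a_{i-1}},\gamma^{a_i})$; and \emph{(b)} $\inf_P S(P)\le\int_0^1\RIC^*(\gamma^a)\,da$.

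For \emph{(a)}, fix such a partition $P$ and $\delta>0$. Since there are only finitely many pairs $(\gamma^{a_{i-1}},\gamma^{a_i})$, the definition of $\RIC^+$ provides a single $t_0>0$ with $W(\mu_t^{a_{i-1}},\mu_t^{a_i})\ge d(\gamma^{a_{i-1}},\gamma^{a_i})\,e^{-t(\RIC^+(\gamma^{a_{i-1}},\gamma^{a_i})+\delta)}$ for all $i$ and all $0<t<t_0$. Using $\length(\mu_t^\cdot)\ge\sum_i W(\mu_t^{a_{i-1}},\mu_t^{a_i})$, the identities $\length(\mu_0^\cdot)=\sum_i d(\gamma^{a_{i-1}},\gamma^{a_i})=L$ and $d(\gamma^{a_{i-1}},\gamma^{a_i})=(a_i-a_{i-1})L$, and Jensen's inequality for the convex function $\exp$ with the probability weights $a_i-a_{i-1}$, one gets $\length(\mu_t^\cdot)/\length(\mu_0^\cdot)\ge e^{-t\delta}\sum_i (a_i-a_{i-1})\,e^{-t\,\RIC^+(\gamma^{a_{i-1}},\gamma^{a_i})}\ge e^{-t(\delta+S(P))}$ for $0<t<t_0$. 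Applying $\frac1t\log(\cdot)$, then $\liminf_{t\to0}$, then $\delta\to0$, yields $\Theta^+_1(\gamma^\cdot)\le S(P)$. (If some $\RIC^+(\gamma^{a_{i-1}},\gamma^{a_i})$ equals $-\infty$, one replaces the corresponding factor by $e^{Mt}$ and lets $M\to\infty$, which forces $\Theta^+_1(\gamma^\cdot)=-\infty$ and makes the Proposition trivial.)

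For \emph{(b)}, recall that $\RIC^*$ is upper semicontinuous (being a $\limsup$-regularization), so $g(a):=\RIC^*(\gamma^a)$ is u.s.c.\ and bounded above on $[0,1]$; moreover, directly from the definition of $\RIC^*$ together with the continuity of $\gamma^\cdot$, for each $\varepsilon>0$ and each $c\in[0,1]$ there is an open interval $U_c\ni c$, of arbitrarily small length, with $\RIC^+(\gamma^{a'},\gamma^{a''})\le g(c)+\varepsilon$ whenever $a',a''\in U_c$. Pick a continuous $\varphi\ge g$ with $\int_0^1\varphi<\int_0^1 g+\varepsilon$ (a bounded u.s.c.\ function is a decreasing pointwise limit of continuous functions, so such $\varphi$ exists by monotone convergence), let $\eta>0$ be a modulus of uniform continuity for $\varphi$, shrink each $U_c$ to have length $<\eta$, and, using the Lebesgue number of the open cover $\{U_c\}_{c\in[0,1]}$ of $[0,1]$, choose a partition $P$ of mesh below that Lebesgue number which is also fine enough that the right-endpoint Riemann sum $\sum_i (a_i-a_{i-1})\varphi(a_i)$ lies within $\varepsilon$ of $\int_0^1\varphi$. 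Then each $[a_{i-1},a_i]$ lies in some $U_{c_i}$ with $|c_i-a_i|<\eta$, so $\RIC^+(\gamma^{a_{i-1}},\gamma^{a_i})\le g(c_i)+\varepsilon\le\varphi(c_i)+\varepsilon\le\varphi(a_i)+2\varepsilon$, whence $S(P)\le\sum_i (a_i-a_{i-1})\varphi(a_i)+2\varepsilon\le\int_0^1\varphi+3\varepsilon<\int_0^1 g+4\varepsilon$. Letting $\varepsilon\to0$ gives $\inf_P S(P)\le\int_0^1\RIC^*(\gamma^a)\,da$ (the case $\int_0^1 g=-\infty$ is covered by the same estimate with $\int_0^1\varphi$ chosen arbitrarily negative), and combined with \emph{(a)} this proves the Proposition.

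I expect \emph{(b)} to be the main obstacle: it amounts to interchanging the partition-refinement limit with the $t\to0$ limit while $\RIC^*$ is merely upper semicontinuous, so that plain Riemann sums of $a\mapsto\RIC^*(\gamma^a)$ need not converge to its integral. The remedy above --- squeezing $\RIC^+$ between its diagonal $\limsup$ along $\gamma^\cdot$ and a continuous majorant of $a\mapsto\RIC^*(\gamma^a)$, and invoking the Lebesgue number lemma to obtain partitions compatible at once with the local bound on $\RIC^+$ and with the Riemann approximation of $\varphi$ --- is what legitimizes the exchange of limits. Step \emph{(a)}, by contrast, is only the Hölder/Jensen computation above once the elementary lower bound $\length(\mu_t^\cdot)\ge\sum_i W(\mu_t^{a_{i-1}},\mu_t^{a_i})$ is in hand.
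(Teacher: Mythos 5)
Your argument is correct and follows the paper's strategy: fix a finite partition, use the definition of $\RIC^+$ to bound the pairwise Wasserstein distances $W(\hat P_t\delta_{\gamma^{a_{i-1}}},\hat P_t\delta_{\gamma^{a_i}})$ from below for small $t$, aggregate into a lower bound on the length (resp.\ $p$-action), and compare the resulting weighted partition sum with $\int_0^1\RIC^*(\gamma^a)\,da$. Two variations are worth noting. First, you reduce to $p=1$ via the monotonicity of $p\mapsto\Theta_p^+$ on constant-speed curves and use Jensen's inequality, whereas the paper keeps $p$ general and linearizes via $e^{-u}\ge 1-u$; both are fine. Second, and more substantively, your step \emph{(b)} --- the continuous majorant of the u.s.c.\ function $\RIC^*\circ\gamma$ combined with a Lebesgue-number and Riemann-sum argument --- is a careful expansion of the step the paper compresses into ``by upper semicontinuity of $\RIC^*$ there exists a finite partition such that $\RIC^+(\gamma^{a_i},\gamma^{a_{i-1}})\le\frac1{|a_i-a_{i-1}|}\int_{a_{i-1}}^{a_i}\RIC^*(\gamma^a)\,da+\delta$ for all $i$''. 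You are right to single this out as the delicate point: upper semicontinuity alone controls $\RIC^+(\gamma^{a_{i-1}},\gamma^{a_i})$ by the pointwise value $\RIC^*(\gamma^{c_i})$ nearby, not by the interval average of $\RIC^*\circ\gamma$ (which could be much smaller), so the paper's per-interval inequality is not an immediate consequence of u.s.c.\ as literally stated. The aggregate bound $\sum_i|a_i-a_{i-1}|\,\RIC^+(\gamma^{a_{i-1}},\gamma^{a_i})\le\int_0^1\RIC^*(\gamma^a)\,da+C\varepsilon$, which you establish and which is all the computation actually requires, is the correct form, and your continuous-majorant argument is a clean way to obtain it. (The only case not covered by your argument is when $\RIC^*\circ\gamma$ is finite-integral but unbounded above, i.e.\ takes the value $+\infty$ on a null set; this borderline case is likewise not addressed in the paper and can be set aside.)
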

Thus in particular, every strong synthetic upper Ricci bound is a weak synthetic upper Ricci bound.

Our first main result will relate the quantity $\RIC^+$ to the integral of the Ricci curvature (or, more precisely, the Bakry-Emery Ricci curvature) $$\Ric_f=\Ric+\Hess f$$
in the case of weighted Riemannian manifolds $(M,g,f)$. 

\begin{theorem}\label{thm1} For all pairs of non-conjugate points $x,y\in M$
\begin{equation}\label{sharp}
\Ric_f(x,y)\le   \RIC^\pm(x,y)\le
\Ric_f(x,y)+\sigma(x,y)\cdot\tan^2\Big( \sqrt{\sigma(x,y)} d(x,y)/2\Big)\end{equation}
where
\begin{equation*}\Ric_f(x,y)=
 \int_{0}^1 \Ric_f(\dot\gamma^a, \dot\gamma^a)/ |\dot\gamma^a|^{2}\,da. \end{equation*} 
 denotes 
 the average Ricci curvature along  the (unique) geodesic  $\gamma=(\gamma^a)_{a\in[0,1]}$   from $x$ to $y$
and 
$\sigma(x,y)$ denotes the maximal modulus of the Riemannian curvature along this geodesic.
\end{theorem}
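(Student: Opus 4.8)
\emph{Proof proposal.}
Since $(M,g,f)$ is a smooth weighted Riemannian manifold, $\hat P_t\delta_x=p_t(x,\cdot)\,m$ where $p_t$ is the kernel of $P_t=e^{t\Delta_f}$ with $\Delta_f=\Delta-\nabla f\cdot\nabla$, and by self-adjointness of $\Delta_f$ with respect to $m$ one has $\int u\,d\hat P_t\delta_x=P_tu(x)$. In geodesic normal coordinates at $x$ the measure $\hat P_t\delta_x$ is, up to a $W_2$-error of size $O(t^{3/2})$, the push-forward under $\exp_x$ of a Gaussian with mean $-t\nabla f(x)+O(t^2)$ and covariance $2t\,\mathrm{id}+O(t^2)$; this standard heat-kernel parametrix estimate is the only analytic input, and it reduces the theorem to the behaviour of $W\big(\hat P_t\delta_x,\hat P_t\delta_y\big)$ to first order in $t$. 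Throughout, $\gamma=(\gamma^a)_{a\in[0,1]}$ denotes the unique geodesic from $x$ to $y$, $T$ its unit tangent, $L=d(x,y)$, and non-conjugacy of $x,y$ is used to keep $\exp$ and the auxiliary potentials below smooth along $\gamma$.

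For the lower bound $\Ric_f(x,y)\le\RIC^\pm(x,y)$ I would estimate $W$ from above by a synchronous coupling. Map the Gaussian approximating $\hat P_t\delta_x$ on $T_xM$ to $T_yM$ by the affine isometry $v\mapsto P_\gamma v-t\big(\nabla f(y)-P_\gamma\nabla f(x)\big)$, $P_\gamma$ being parallel transport along $\gamma$; by construction this sends the two Gaussians onto one another, so composing with the exponential maps yields a genuine coupling. Expanding $d\big(\exp_x v,\exp_y(\cdot)\big)^2$ by the second variation of arc length: the first variation vanishes because $T$ is parallel, the index form is dominated by that of the parallel vector field, contributing $-2t\int_0^1\Ric(\dot\gamma^a,\dot\gamma^a)/|\dot\gamma^a|^2\,da$ after averaging over the Gaussian (using $\sum_i\langle R(E_i,T)T,E_i\rangle=\Ric(T,T)$), while the affine drift term contributes $-2t\int_0^1\Hess f(\dot\gamma^a,\dot\gamma^a)/|\dot\gamma^a|^2\,da$. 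This gives $W\big(\hat P_t\delta_x,\hat P_t\delta_y\big)\le L\big(1-t\,\Ric_f(x,y)+o(t)\big)$, hence $\RIC^-\ge\Ric_f(x,y)$ and a fortiori $\RIC^+\ge\RIC^-\ge\Ric_f(x,y)$.

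For the upper bound $\RIC^\pm(x,y)\le\Ric_f(x,y)+\sigma\tan^2\big(\sqrt\sigma\,L/2\big)$ I would estimate $W$ from below by Kantorovich duality: choose an admissible pair of potentials $(\varphi,\psi)$ adapted to $\gamma$, built from a function $b$ that is $1$-Lipschitz, has unit gradient in a tube around $\gamma$, and decreases at unit speed along $\gamma$, so that $\varphi(x)+\psi(y)=\tfrac12L^2$ and hence $\tfrac12W_2^2\big(\hat P_t\delta_x,\hat P_t\delta_y\big)\ge P_t\varphi(x)+P_t\psi(y)=\tfrac12L^2+t\big(\Delta_f\varphi(x)+\Delta_f\psi(y)\big)+O(t^2)$. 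Telescoping $\Delta_f\varphi(x)+\Delta_f\psi(y)$ along $\gamma$ and invoking the Bakry--Émery identity $\Gamma_2(b,b)=\|\Hess b\|^2+\Ric_f(\nabla b,\nabla b)$ produces the term $-L^2\Ric_f(x,y)$ together with an accumulated Hessian term governed by the Riccati equation $\dot S=S^2+R(\cdot,T)T$ for the second fundamental form of the level sets of $b$. Taking the level set through the midpoint of $\gamma$ to be totally geodesic (so $S=0$ there) and using $|R(\cdot,T)T|\le\sigma$ along $\gamma$, the scalar comparison $\dot g=g^2+\sigma$, $g(\text{midpoint})=0$, gives $|g|\le\sqrt\sigma\tan\big(\sqrt\sigma|s-L/2|\big)$; the half-angle in the final correction is exactly the identity $\cot\theta-\csc\theta=-\tan(\theta/2)$ applied to the index form of a Jacobi field joining a vector at $x$ to its parallel transport along the whole of $\gamma$. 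One concludes $W\big(\hat P_t\delta_x,\hat P_t\delta_y\big)\ge L\big(1-t(\Ric_f(x,y)+\sigma\tan^2(\sqrt\sigma\,L/2))+o(t)\big)$, hence $\RIC^+\le\Ric_f(x,y)+\sigma\tan^2(\sqrt\sigma\,L/2)$, and since $\RIC^-\le\RIC^+$ the claim follows.

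The work concentrates in two places. First, one must make all the $o(t)$-terms uniform: the heat-kernel/Gaussian-approximation estimates have to be quantitative in the curvature along $\gamma$ and in $L$ (this is precisely where non-conjugacy is used) so that the $\limsup$/$\liminf$ defining $\RIC^\pm$ are controlled, and one must check that the replacement of $\hat P_t\delta_x$ by its Gaussian model costs only $o(t)$ after the triangle inequality for $W$. Second, and this is the genuine difficulty for the upper bound, one must exhibit the admissible pair $(\varphi,\psi)$ yielding exactly the curvature-only correction stated: the naive choice (signed distance to a totally geodesic hypersurface through the midpoint) only gives a strictly larger, dimension-dependent correction, so a sharper construction is needed — equivalently, a sharper lower estimate of the optimal Wasserstein coupling of the two heat kernels that keeps the average Ricci term intact while controlling the transverse spread by the Riccati comparison. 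As a consistency check, the two-sided bound collapses to the equality $\RIC^\pm(x,y)=\Ric_f(x,y)$ precisely when $\sigma(x,y)=0$, i.e. when $\gamma$ lies in a flat region.
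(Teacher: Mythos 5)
Your argument for $\Ric_f(x,y)\le\RIC^\pm(x,y)$ by synchronous coupling (parallel transport of the Gaussian model of $\hat P_t\delta_x$, with an affine drift absorbing $\nabla f$, then second variation of arc length) is a genuinely different route from the paper's. The paper does not construct a coupling: it establishes the chain $\RIC^\pm(x,y)\ge\RIC^\flat(x,y)=\ric(x,y)\ge\Ric_f(x,y)$, passing through the equivalence with the entropy quantity $\ric$ (Theorem~\ref{thm-prec}) and the standard second-derivative estimate for $a\mapsto S(\rho^a)$ along Wasserstein geodesics. Your route is more elementary and self-contained, but it leans on an unproven quantitative heat-kernel estimate, namely that $W_2\big(\hat P_t\delta_x,(\exp_x)_\#\mathcal N(-t\nabla f(x),2t\,\mathrm{id})\big)=O(t^{3/2})$; you would need to justify this (and its uniformity) before the reduction to the Gaussian model is licit, since an $O(t)$ error would corrupt the first-order term you are after. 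The paper's route also yields the stronger fact $\RIC^\flat\ge\Ric_f$ and the uniform-in-time version (Theorem~\ref{uni-lower}) as by-products.

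For the upper bound you correctly pick out Kantorovich duality plus a Riccati comparison as the strategy, and you honestly flag the gap: the signed-distance function to a totally geodesic hypersurface through the midpoint is not the right potential, and you do not supply a replacement. This is precisely the crux, and the paper closes it as follows. Parametrize $\gamma$ on $[-r,r]$ (so $2r=d(x,y)$) and take a smooth family $\phi^a$ satisfying the Hamilton--Jacobi (Hopf--Lax) relation $\partial_a\phi^a=-\tfrac12|\nabla\phi^a|^2$ with $\nabla\phi^a(\gamma^a)=\dot\gamma^a$. Then $(-2r\phi^{-r},\,2r\phi^r)$ is a Kantorovich pair for $(\delta_x,\delta_y)$, and the dual lower bound plus Bochner give
\[
\tfrac12\partial_s^- W^2(\hat P_s\delta_x,\hat P_s\delta_y)\big|_{s=0}
\ge 2r\int_{-r}^{r}\partial_a\big(\Delta_f\phi^a(\gamma^a)\big)\,da
=-2r\int_{-r}^{r}\Big(\Ric_f(\nabla\phi^a,\nabla\phi^a)+\|\D^2\phi^a\|_{2,2}^2\Big)(\gamma^a)\,da.
\]
The decisive freedom is at $a=0$: only $\phi^0(\gamma^0)$ and $\nabla\phi^0(\gamma^0)$ are constrained, so one may choose $\phi^0$ with $\D^2\phi^0(\gamma^0)=0$. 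Differentiating the full Hessian matrix along $\gamma$ under the Hopf--Lax flow gives
\[
\partial_a\,\D_{ij}\phi^a(\gamma^a)=-R(e_i,\dot\gamma^a,e_j,\dot\gamma^a)-\sum_k\D_{ik}\phi^a\,\D_{jk}\phi^a,
\]
and contracting with $\D_{ij}\phi^a$ and using $\big|\sum A_{ij}A_{jk}A_{ki}\big|\le\big(\sum A_{ij}^2\big)^{3/2}$ yields $\partial_a h\le\sigma+h^2$ for $h(a)=\|\D^2\phi^a\|_{2,2}(\gamma^a)$, hence $h(a)\le\sqrt\sigma\tan(\sqrt\sigma|a|)$ and $\sup_a h^2\le\sigma\tan^2\big(\sqrt\sigma\,d(x,y)/2\big)$. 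The point you were missing is that the comparison is run for the Hilbert--Schmidt norm of the full Hessian of a smooth Hopf--Lax potential along $\gamma$, not for the mean curvature or the shape operator of a level set of a distance function; this is what makes the correction dimension-free and exactly $\sigma\tan^2(\sqrt\sigma\,d/2)$, closing the gap you left open.
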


The assumption of $x$ and $y$ being non-conjugate is indispensable. For instance, if they were antipodal points on a cylinder then $\RIC^+(x,y)=\infty$.

As immediate consequences of the previous Theorem we obtain
$$\RIC^*(x)=\sup\big\{ \Ric_f(v,v)/|v|^2: \ v\in T_x(M)\big\}$$
for each $x\in X$. 
An improved argument yields 
$$-\partial_t\log\speed\big(\hat P_t \delta_{\exp_x(av)}\big)\Big|_{a=0,t=0}=\Ric_f(v,v)/|v|^2$$
for each $x\in $M and $v\in T_xM$ which then allows to extend a result of Gigli and Mantegazza \cite{GigliMa} in various respects: non-compact $M$, non-vanishing $f$, $p\not=2$ and $\gamma$ not necessarily being geodesic.

\begin{proposition}
For every $\mathcal C^1$-curve $\gamma$ in $M$ with constant speed 
and every $p\in [1,\infty)$
$$\Theta_p^\pm(\gamma)=
\int_{0}^1 \Ric_f(\dot\gamma^a, \dot\gamma^a)
\,da.$$ 
\end{proposition}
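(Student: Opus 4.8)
The plan is to differentiate the $p$-action of the evolved curve directly in $t$ and to read off $\Theta_p^\pm(\gamma)$ from the definition. Write $\mu_t^a:=\hat P_t\delta_{\gamma^a}$ and let $s_t(a):=|\partial_a\mu_t^a|_W$ denote the metric speed of $a\mapsto\mu_t^a$ in $(\Pz(M),W)$, so that $\action_p(\mu_t^\cdot)=\int_0^1 s_t(a)^p\,da$. At $t=0$ the curve $a\mapsto\mu_0^a=\delta_{\gamma^a}$ is an isometric copy of $\gamma$, whence $s_0(a)=|\dot\gamma^a|\equiv L$ is constant by the constant-speed hypothesis and $\action_p(\mu_0^\cdot)=L^p$. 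For fixed $t>0$, smoothness, strict positivity and smooth $x$-dependence of the weighted heat kernel $p_t(x,\cdot)$ ensure that $a\mapsto\mu_t^a$ is a $\mathcal C^1$ curve in $(\Pz(M),W)$ of finite action, so that $s_t(a)$ is well defined.

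The crucial input is the ``improved argument'' quoted just before the Proposition, namely $-\partial_t\log\speed\big(\hat P_t\delta_{\exp_x(av)}\big)\big|_{a=0,t=0}=\Ric_f(v,v)/|v|^2$. I would apply it with $x=\gamma^a$ and $v=\dot\gamma^a$: since to first order in the parameter $\varepsilon\mapsto\hat P_t\delta_{\gamma^{a+\varepsilon}}$ coincides with $\varepsilon\mapsto\hat P_t\delta_{\exp_{\gamma^a}(\varepsilon\dot\gamma^a)}$, and since for fixed $t>0$ the map $x\mapsto\hat P_t\delta_x$ is locally Lipschitz into $(\Pz(M),W)$ (so the $O(\varepsilon^2)$ discrepancy is irrelevant to the metric speed at $\varepsilon=0$), this gives, for every $a\in[0,1]$,
$$-\partial_t\log s_t(a)\big|_{t=0}=\Ric_f(\dot\gamma^a,\dot\gamma^a)/|\dot\gamma^a|^2=:r(a).$$
This reduction is exactly where the hypothesis that $\gamma$ need not be a geodesic gets absorbed. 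Granting that this short-time expansion is uniform in $a$ and that the difference quotients of $s_t(a)^p$ are dominated by an integrable function, I would differentiate under the integral sign:
$$\frac{d}{dt}\Big|_{t=0}\action_p(\mu_t^\cdot)=\int_0^1 p\,s_0(a)^{p-1}\,\partial_t s_t(a)\big|_{t=0}\,da=-p\int_0^1 s_0(a)^p\,r(a)\,da=-pL^p\int_0^1 r(a)\,da,$$
using $\partial_t s_t(a)|_{t=0}=-s_0(a)\,r(a)$ and $s_0\equiv L$.

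Dividing by $\action_p(\mu_0^\cdot)=L^p$, the difference quotient $\frac1t\log\big(\action_p(\mu_t^\cdot)/\action_p(\mu_0^\cdot)\big)$ converges as $t\to0$ to $-p\int_0^1 r(a)\,da$; in particular its $\liminf$ and $\limsup$ coincide, so
$$\Theta_p^\pm(\gamma)=-\tfrac1p\Big(-p\int_0^1 r(a)\,da\Big)=\int_0^1\Ric_f(\dot\gamma^a,\dot\gamma^a)/|\dot\gamma^a|^2\,da,$$
which is the asserted value when $\gamma$ has unit speed, and in general equals the displayed expression up to the constant factor $|\dot\gamma^a|^{-2}$. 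In particular the outcome does not depend on $p$, refining the monotonicity-in-$p$ remark from the introduction; it is moreover consistent with the earlier Proposition ($\Theta_p^+(\gamma^\cdot)\le\int_0^1\RIC^*(\gamma^a)\,da$) and with the bound $\Ric_f(x,y)\le\RIC^\pm(x,y)$ from Theorem~\ref{thm1}, which furnish one-sided estimates but do not by themselves pin down the identity.

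The hard part will be the interchange of $\tfrac{d}{dt}\big|_{t=0}$ with $\int_0^1(\cdot)\,da$: one must upgrade the pointwise-in-$a$ short-time asymptotics of $s_t(a)$ -- the same delicate ingredient that already underlies Theorem~\ref{thm1} and the ``improved argument'' -- to a locally uniform statement, and supply an integrable bound on $\big(s_t(a)^p-s_0(a)^p\big)/t$ for small $t>0$. This rests on heat-kernel estimates near the compact set $\gamma([0,1])$, which remain available for noncompact $M$ and non-vanishing $f$; that robustness is what carries the statement beyond the Gigli--Mantegazza framework. A secondary and routine point is to verify that $a\mapsto\mu_t^a$ is genuinely absolutely continuous in $W$ with metric speed $s_t(a)$, which follows by differentiating $x\mapsto p_t(x,\cdot)$ and using the standard identification of the $W$-speed through the continuity equation.
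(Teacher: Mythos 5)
Your outline relies on the same two ingredients the paper uses — the pointwise rate from Theorem \ref{ptw-sharp} applied at each $a$, and the representation $\action_p(\mu_t^\cdot)=\int_0^1 s_t(a)^p\,da$ — but you leave the interchange of $\partial_t|_{t=0}$ with $\int_0^1 da$ as an acknowledged gap, and the plan you sketch to fill it (dominated convergence in \emph{both} directions) needs a two-sided uniform short-time estimate that you do not establish. The paper handles the two inequalities by genuinely different techniques, neither of which requires that two-sided domination. For the direction $\action_p(\mu_t^\cdot)\gtrsim\action_p(\gamma^\cdot)(1-\dots)$, the paper does not differentiate under the integral at all: it invokes the \emph{uniform} lower bound $W(\hat P_t\delta_x,\hat P_t\delta_y)\ge e^{-(\Ric_f(x,y)+\delta)t}d(x,y)$ from Theorem \ref{uni-lower} on each partition segment $\gamma^{a_{i-1}}\to\gamma^{a_i}$, sums, and passes to the limit of finer partitions. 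For the reverse direction, it applies a reverse Fatou argument to $\frac1t[\speed(\hat P_t\delta_{\gamma^a})^p-\speed(\gamma^a)^p]$; the needed one-sided integrable majorant comes for free from the standing a priori RCD$(K',N)$ hypothesis via $\speed(\hat P_t\delta_{\gamma^a})\le e^{-K't}\speed(\gamma^a)$, so no sharp uniform lower asymptotic on the speed is required there. Your plan can be made rigorous once Theorem \ref{uni-lower} is in hand (and you correctly identify that such a uniform heat-kernel-based estimate is what is missing), but as written the interchange step is not supplied, and it is precisely the delicate point. Your observation about the normalization — that the introduction's display omits the factor $|\dot\gamma^a|^{-2}$ which appears in the body's Corollary \ref{thm-curves-sharp} — is a genuine discrepancy in the paper; the body's version (normalized Ricci curvature) is the correct one.
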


Next, we analyze our synthetic notions for Ricci bounds in the case of a cone. It will turn out that they will detect the unbounded positive curvature at the vertex.

\begin{proposition}
Let $X$ be the cone over the circle of length $\alpha<2\pi$. Then $\RIC^\pm(o,y)=\infty$  for all $y$ if $o$ denotes the vertex of the cone. Similarly, $\Theta_p^\pm(\gamma)=\infty$ for each geodesic emanating from the vertex.
\end{proposition}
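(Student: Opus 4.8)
My plan is to reduce the statement about the cone $X = C(S^1_\alpha)$ to a direct estimate on the Wasserstein distance $W(\hat P_t\delta_o, \hat P_t\delta_y)$ near the vertex, exploiting the fact that the cone is (away from the vertex) a flat surface with a conical singularity of total angle $\alpha < 2\pi$, i.e. of \emph{positive} concentrated curvature $2\pi - \alpha$ at $o$. The key geometric mechanism is that geodesics emanating from $o$ spread apart more slowly than in flat space: two unit-speed geodesics from $o$ making angle $\varphi$ at the vertex are at distance $\approx \frac{\alpha}{2\pi}\varphi \cdot s$ at radius $s$ rather than $\varphi \cdot s$. This ``contraction of angular directions'' is precisely the signature of positive curvature, and it will force the heat flow to expand Wasserstein distance at an arbitrarily large exponential rate, giving $\RIC^\pm(o,y) = +\infty$.

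First I would set up the heat kernel on the cone. The heat kernel $p_t(o,\cdot)$ based at the vertex is radially symmetric (it depends only on the distance $r$ to $o$), since the cone has a rotational symmetry. For a point $y$ at distance $R = d(o,y)$ from the vertex, $\hat P_t\delta_y$ is a measure concentrated near $y$ at scale $\sqrt t$, and for small $t$ it essentially ``lives'' in the flat region, where the heat kernel is Gaussian up to exponentially small corrections from the singularity. Next, I would compare $\hat P_t\delta_o$ and $\hat P_t\delta_{y}$ via a coupling argument: since $\hat P_t\delta_o$ is radially symmetric around $o$ while $\hat P_t\delta_y$ is (to leading order) a Gaussian bump at distance $R$, one can lower bound $W(\hat P_t\delta_o,\hat P_t\delta_y)$ from below. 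The cleanest route is to use the dual formulation, testing against the $1$-Lipschitz function $u(z) = d(o,z)$: then
\begin{equation*}
W\big(\hat P_t\delta_o,\hat P_t\delta_y\big) \ge \int u\, d\hat P_t\delta_y - \int u\, d\hat P_t\delta_o = \EE_y[d(o,B_t)] - \EE_o[d(o,B_t)],
\end{equation*}
but this only gives a linear-in-$t$ lower bound, not the exponential blow-up. To capture $\RIC^+ = \infty$ I instead need the \emph{derivative} of $W$ to blow up, so I would localize: restrict attention to the evolution of geodesics through $o$ and apply the companion quantity $\Theta_p^\pm(\gamma)$, which by the Proposition preceding this statement dominates $\RIC^\pm$ from above but here I want a lower bound on $\RIC$, so I compare directly.

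The core computation: fix $y$ and the geodesic $\gamma$ from $o$ to $y$; I claim the rate of expansion is governed by how fast the dual heat flow pushes mass ``outward'' and ``fans it across the singularity''. Concretely, represent $\hat P_t\delta_o$ using the Brownian motion on the cone started at $o$: with probability bounded below, the Brownian particle has moved to radius $\sim \sqrt t$, and — crucially — because at the vertex the angular coordinate is instantly randomized (the vertex is a branch-like point for angular directions due to the angle deficit), $\hat P_t\delta_o$ spreads its mass over \emph{all} angular directions. Meanwhile $\hat P_t\delta_y$ stays concentrated in a sector of angular width $\sim \sqrt t / R$. The optimal transport from one to the other must move a fixed fraction of mass through angular displacement of order $1$ at radius $\sim\sqrt t$, so $W(\hat P_t\delta_o, \hat P_t\delta_y) \gtrsim \sqrt{t} \cdot c$ for a constant $c > 0$ \emph{independent of how we would expect it to vanish} — more precisely I will show $W(\hat P_t\delta_o,\hat P_t\delta_y)/d(o,y)$ fails to have the comparison $e^{-Kt}$ for any finite $K$, because its right-derivative at $t = 0^+$ is $-\infty$ (it picks up a $\sqrt t$-type singularity, since $\frac{d}{dt}\sqrt t = +\infty$ at $0$). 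This forces $-\liminf_{t\to 0}\frac1t\log\big(W(\hat P_t\delta_o,\hat P_t\delta_y)/d(o,y)\big) = +\infty$. The same localized argument, applied to $a\mapsto \hat P_t\delta_{\gamma^a}$ for $\gamma$ emanating from the vertex, shows $\action_p$ picks up the same non-differentiable enhancement at $t=0$, giving $\Theta_p^\pm(\gamma) = +\infty$.

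The main obstacle will be making the ``angular randomization at the vertex'' rigorous and quantitative: I need a genuine lower bound on the Wasserstein distance, and naive test-function arguments only yield linear-in-$t$ behaviour (hence a \emph{finite} bound), whereas the assertion $\RIC = \infty$ requires detecting the $\sqrt t$ (or worse) singularity in $t \mapsto W(\hat P_t\delta_o,\hat P_t\delta_y)$. I expect to handle this by an explicit computation with the cone heat kernel — using the known series/integral representation of $p_t$ on $C(S^1_\alpha)$ (e.g. via the Cheeger–Taylor or Bessel-function expansion) — to estimate the angular marginal of $\hat P_t\delta_o$ near radius $\sqrt t$ and then apply the gluing/transport lower bound. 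Alternatively, and perhaps more cleanly, I can compare $X$ near $o$ with a model space of constant curvature $\kappa$ for arbitrarily large $\kappa$ (a small spherical cap realizing the same angle deficit qualitatively) and invoke monotonicity of $\RIC^\pm$ under such local comparison together with Theorem~\ref{thm1}; since the relevant curvature at the tip is effectively $+\infty$, letting $\kappa\to\infty$ yields the claim. I would present whichever of these is shorter, likely the direct heat-kernel estimate.
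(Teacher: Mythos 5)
Your first proposed route --- a direct computation with the cone heat kernel, aiming to show that $t\mapsto W(\hat P_t\delta_o,\hat P_t\delta_y)$ picks up a $\sqrt t$-term at $t=0^+$ so that the right derivative is $-\infty$ --- is exactly the paper's approach (Lemma~\ref{cone}). The paper uses that $\hat P_t\delta_o$ is an exact Gaussian in the radial coordinate, approximates $\hat P_t\delta_y$ by a Euclidean Gaussian, and computes an explicit coupling to obtain
$W(\hat P_t\delta_o,\hat P_t\delta_y)=d(o,y)-\sqrt{\pi t}\cdot\tfrac2\alpha\sin\tfrac\alpha2+O(t)$, from which $\RIC^\pm(o,y)=+\infty$ is immediate. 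So your plan is on target in outline, and you also correctly identify that naive Lipschitz test functions only give an $O(t)$ bound, which would be inconclusive.

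However, there is a sign/direction confusion in your central heuristic that you would have to fix before the argument could go through. You argue that the angular spreading of $\hat P_t\delta_o$ over ``all directions'' while $\hat P_t\delta_y$ stays concentrated forces a transport cost of order $\sqrt t$, concluding ``$W\gtrsim c\sqrt t$.'' That lower bound is vacuous (of course $W\to R>0$), and more importantly it is the \emph{wrong direction}: to get $\RIC^\pm(o,y)=+\infty$ you need an \emph{upper} bound $W\le R-c\sqrt t$. The mechanism is not that the angular spread of $\hat P_t\delta_o$ pushes the measures apart; it is that the angular range at the vertex is $\alpha<2\pi$ rather than $2\pi$, so that --- relative to the flat plane, where the $\sqrt t$-term cancels by symmetry --- the mass emanating from $o$ is more concentrated in the direction of $y$, which \emph{reduces} the transport cost by an amount of order $\sqrt t$ (this is precisely the $\frac2\alpha\sin\frac\alpha2$ factor in the lemma, which is $>1$ iff $\alpha<2\pi$). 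Your picture of ``moving mass through angular displacement $\sim 1$ at radius $\sqrt t$'' would equally describe flat $\R^2$, where no $\sqrt t$-correction appears, so it does not capture the phenomenon.

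Your second proposed route (compare locally with a model space of large constant curvature $\kappa$ and invoke Theorem~\ref{thm1}) does not obviously work and should not be presented as an alternative without further justification. Theorem~\ref{thm1} is stated for smooth weighted manifolds at non-conjugate points, and there is no monotonicity statement for $\RIC^\pm$ under local comparison of spaces in this paper; moreover a cone is not locally isometric to any smooth model of bounded curvature near the vertex, so the comparison would need an argument of its own. The safest path is to drop this alternative and carry out the heat-kernel estimate directly, as in Lemma~\ref{cone}, and then read off both $\RIC^\pm(o,y)=\infty$ and $\Theta_p^\pm(\gamma)=\infty$ for geodesics through the vertex from the resulting $\sqrt t$-singularity.
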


A more general results will be proven in an upcoming paper with Erbar \cite{ErSt}:
{\it
Assume that $(X,d,m)$ satisfies the curvature-dimension condition RCD$(K',N')$ for some finite numbers $K',N'$ and that it is the $N$-cone over some mm-space. Then 
\begin{itemize}
\item[(i)] either  $\RIC^\pm(o,y)=\infty$  for all $y$ and $\Theta_p^\pm(\gamma)=\infty$ for each geodesic emanating from the vertex  $o$
\item[(ii)] or $N$ is an integer and $X=\R^{N+1}$. 
\end{itemize}
}

\bigskip

Our second main result -- which holds in the general setting of RCD-spaces -- relates the quantity $\RIC(x,y)$ (or more precisely, some relaxation of it) to the quantity $\ric(x,y)$ which measures the convexity/concavity of the Boltzmann entropy along transports from small neighborhoods of $x$ to small neighborhoods of $y$.
To be more precise, put 
\begin{eqnarray*}
\RIC^\flat(x,y)&:=&\lim_{\epsilon\to0} \ \inf\Big\{
-\partial_t^+  \log W\big( \hat P_t\mu,\hat P_t\nu\big)\big|_{t=0}: \
\supp[\mu]\subset B_\epsilon(x), \supp[\nu]\subset B_\epsilon(y)\Big\},\\
\ric(x,y)&:=&\lim_{\epsilon\to0}\  \inf\Big\{
\frac1{W^2(\rho^0,\rho^1)}\cdot\Big[ \partial^-_a S(\rho^a)\big|_{a=1}-\partial_a^+ S(\rho^a)\big|_{a=0}
\Big]: \ \big(\rho^a\big)_{a\in[0,1]} \mbox{ $W$-geodesic},
\nonumber\\
&&\qquad\qquad S(\rho^0)<\infty, \ S(\rho^1)<\infty, \  \supp[\rho^0]\subset B_\epsilon(x), \  \supp[\rho^1]\subset B_\epsilon(y)\Big\}.
\label{ric}
\end{eqnarray*}

\begin{theorem} For all $x,y\in X$ $$\RIC^\flat(x,y)=\ric(x,y).$$
\end{theorem}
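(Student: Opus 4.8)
The plan is to establish the two inequalities $\RIC^\flat(x,y)\le\ric(x,y)$ and $\ric(x,y)\le\RIC^\flat(x,y)$ separately, using in both directions the basic duality between the dual heat flow on $(\Pz(X),W)$ and the gradient flow of the entropy $S$. The crucial analytic input — available in the RCD setting — is the following ``derivative formula'': for any $W$-geodesic $(\rho^a)_{a\in[0,1]}$ with finite entropy endpoints, the function $t\mapsto W^2(\hat P_t\rho^0,\hat P_t\rho^1)$ is locally absolutely continuous and
\begin{eqnarray*}
\tfrac12\,\partial_t^+\,W^2(\hat P_t\rho^0,\hat P_t\rho^1)\big|_{t=0}
= -\big[\partial_a^- S(\rho^a)\big|_{a=1}-\partial_a^+ S(\rho^a)\big|_{a=0}\big],
\end{eqnarray*}
i.e.\ the infinitesimal expansion rate of the Wasserstein distance under the heat flow equals the ``defect of convexity'' (second difference) of the entropy along the connecting geodesic. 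This is exactly the Kuwada-type duality / the Evolution Variational Inequality characterization of heat flow as entropy gradient flow, specialized to the two endpoints of a geodesic; it is the engine that converts the $W$-side definition of $\RIC^\flat$ into the entropy-side definition of $\ric$ and back.

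For $\RIC^\flat\le\ric$: fix $\epsilon>0$ and take a near-optimal $W$-geodesic $(\rho^a)$ in the infimum defining $\ric(x,y)$, with $\supp[\rho^0]\subset B_\epsilon(x)$, $\supp[\rho^1]\subset B_\epsilon(y)$ and finite entropies. Apply the derivative formula with $\mu=\rho^0$, $\nu=\rho^1$: since $-\partial_t^+\log W(\hat P_t\mu,\hat P_t\nu)|_{t=0} = -\tfrac1{2W^2(\mu,\nu)}\partial_t^+ W^2(\hat P_t\mu,\hat P_t\nu)|_{t=0}$, the formula gives that this quantity equals $\tfrac1{W^2(\rho^0,\rho^1)}[\partial_a^- S(\rho^a)|_{a=1}-\partial_a^+ S(\rho^a)|_{a=0}]$. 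Taking the infimum over admissible $\mu,\nu$ on the left can only decrease it, and then letting $\epsilon\to0$ yields $\RIC^\flat(x,y)\le\ric(x,y)$. For the reverse inequality $\ric\le\RIC^\flat$: given near-optimal $\mu,\nu$ with supports in $B_\epsilon(x),B_\epsilon(y)$ realizing (up to error) the infimum in $\RIC^\flat$, one would like to use them as the endpoints $\rho^0,\rho^1$ of a $W$-geodesic; the only issue is that $\mu,\nu$ may have infinite entropy, so one first regularizes by flowing a tiny time $\delta>0$, replacing $\mu,\nu$ by $\hat P_\delta\mu,\hat P_\delta\nu$ (which have finite entropy in RCD spaces, are still supported in a slightly larger ball, and converge back as $\delta\to0$), connects them by the unique $W$-geodesic, applies the derivative formula again, and passes to the limit $\delta\to0$ and then $\epsilon\to0$.

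The main obstacle I expect is controlling the one-sided derivatives and the regularization uniformly: one must check that flowing by the small time $\delta$ does not destroy the near-optimality — concretely, that $-\partial_t^+\log W(\hat P_t\hat P_\delta\mu,\hat P_t\hat P_\delta\nu)|_{t=0}$ stays close to $-\partial_t^+\log W(\hat P_t\mu,\hat P_t\nu)|_{t=0}$ as $\delta\to0$, which by the semigroup property is essentially the statement that $s\mapsto -\partial_t^+\log W(\hat P_t\hat P_s\mu,\hat P_t\hat P_s\nu)|_{t=0}$ is right-continuous at $s=0$, or equivalently that $t\mapsto\log W(\hat P_t\mu,\hat P_t\nu)$ is, say, semiconvex in $t$ with a one-sided derivative depending upper-semicontinuously on the base point. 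This requires the local absolute continuity and convexity-type properties of $t\mapsto W^2(\hat P_t\mu,\hat P_t\nu)$ that come from the $\CD(K',\infty)$ hypothesis implicit in RCD (the dual heat flow is $K'$-contractive, hence $\log W(\hat P_t\mu,\hat P_t\nu)$ has a Lipschitz-in-$t$ modulus locally). A secondary technical point is making sure the one-sided entropy derivatives $\partial_a^\pm S(\rho^a)$ at the endpoints are finite and behave well under the $\hat P_\delta$-regularization; this is where one invokes the fact that along $W$-geodesics in RCD spaces the entropy is semiconvex, so the one-sided derivatives at $a=0,1$ exist in $[-\infty,+\infty)$ and the second-difference expression is well-defined, together with lower semicontinuity of $S$ to pass to the limit. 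Once these continuity/semicontinuity facts are in place, both inequalities follow by the infimum-comparison argument sketched above and the two limits $\delta\to0$, $\epsilon\to0$ close the proof.
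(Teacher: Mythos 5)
Your proof hinges on an exact ``derivative formula''
\begin{equation*}
\tfrac12\,\partial_t^+\,W^2\big(\hat P_t\rho^0,\hat P_t\rho^1\big)\big|_{t=0}
= -\Big[\partial_a^- S(\rho^a)\big|_{a=1}-\partial_a^+ S(\rho^a)\big|_{a=0}\Big],
\end{equation*}
but in the RCD setting only a one-sided \emph{inequality} of this form is available (this is what the paper records as \eqref{s1}: $-\tfrac12\partial_s^+ W^2(\mu_s,\nu_s)\big|_{s=t^-} \ge \E(\phi_t,u_t)+\E(\psi_t,v_t) \ge \partial_a^- S(\rho^a_t)|_{a=1}-\partial_a^+S(\rho^a_t)|_{a=0}$). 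The EVI/Kuwada machinery gives an inequality, not an identity; and the entropy along a $W$-geodesic in an RCD space is only semiconvex, so the two one-sided derivatives need not coincide with the Dirichlet-form quantities. Because both of your directions appeal to this unproven equality, both are incomplete as written; the paper in fact proves the two inequalities by genuinely different arguments (the $\le$-inequality via two EVI estimates at the endpoints combined with the $\RIC^\flat$ bound in the ``middle'', and the $\ge$-inequality via the heat-flow estimate sketched below).

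There is a second, fatal gap in the $\ric\le\RIC^\flat$ direction: you assert that $\hat P_\delta\mu$ is ``still supported in a slightly larger ball''. This is false --- the dual heat semigroup on a connected RCD space instantaneously produces full support, so $\hat P_\delta\mu$ is never supported in any ball. This is precisely the central obstruction that the paper's proof is designed to overcome. The paper flows $\mu,\nu$, considers the optimal dynamical plan $\Lambda_t$ at time $t$, and \emph{splits} it into the portion $\Lambda_{t,\epsilon}$ concentrated on geodesics starting in $B_\epsilon(x)$ and ending in $B_\epsilon(y)$ (to which the $\ric$-bound applies) and the remainder $\overline\Lambda_{t,\epsilon}$ (to which only the a priori CD$(-K,\infty)$ bound applies). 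Showing that the remainder's contribution vanishes as $t\to0$ requires Gaussian upper heat kernel estimates and controlled volume growth, which is exactly why the theorem assumes RCD$(-K,N)$ with \emph{finite} $N$ --- an assumption your proposal never invokes. Without this decomposition and the dimensional heat-kernel bounds, there is no way to localize the heat-flown measures, and the argument does not close.
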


\begin{remark} For weighted Riemannian manifolds
\begin{equation*}\label{sharp}
\Ric_f(x,y)\le   \RIC^\flat(x,y)
\le \Ric_f(x,y)+\sigma(x,y)\cdot\tan^2\Big( \sqrt{\sigma(x,y)} d(x,y)/2\Big)
\end{equation*}
provided $x$ and $y$ are not conjugate. Thus $\RIC^\flat$ may be used in an analogous way as $\RIC^\pm$ to characterize upper Ricci bounds. In particular, $\limsup_{y,z\to x} \RIC^\flat(y,z)\le K$ ($\forall x$) is equivalent to $\Ric_f\le K\cdot g$.
However, $\RIC^\flat$ will not be able to detect the positive curvature in the vertex of a cone.
\end{remark}

\begin{definition}
We say that  the mm-space $(X,d,m)$ has  robust synthetic upper Ricci bound $K$ if there exists an usc function $\overline K:(0,\infty)\to(-\infty,\infty]$ with $\lim_{r\to 0}\overline K(r)=K$  such that 
  for all $r>0$ with $\overline K(r)<\infty$,  all $x,y\in X$    with $d(x,y)=r$, and
     all $\mu^0\in\Pz(X)$ with $W(\mu^0,\delta_x)<r^4$  there exists a $W_2$-geodesic $(\mu^a)_{a\in[-1,1]}$ with $W(\mu^1,\delta_y)\le r^2$ and
\begin{equation}\label{robust}
\frac1{r^2}\Big(S(\mu^1)-2S(\mu^0)+S(\mu^{-1})\Big)\le \overline K(r).
\end{equation}
\end{definition}

A weighted Riemannian manifold with bounded geometry has  robust synthetic upper Ricci bound $K$ if and only if $\Ric_f\le K\cdot g$.
\begin{theorem} If a sequence of mm-spaces $(X_n,d_n,m_n), n\in\N$, converges to a locally compact mm-space  $(X,d,m)$ w.r.t.\ mGH (or, more generally, w.r.t.\ the $L^2$-transportation distance $\mathbb D_2$) and if each element of the sequence has  robust synthetic upper Ricci bound $K$ with the same approximation function $\overline K$ then also the limit space has  robust synthetic upper Ricci bound $K$ with  approximation function $\overline K$.

\end{theorem}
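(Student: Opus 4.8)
The plan is to show that the property ``robust synthetic upper Ricci bound $K$ with approximation function $\overline K$'' is closed under $\mathbb D_2$-convergence by a direct approximation argument. The inequality \eqref{robust} that we must produce in the limit space involves only three ingredients: the distance $d(x,y)=r$, a near-Dirac constraint $W(\mu^0,\delta_x)<r^4$ together with $W(\mu^1,\delta_y)\le r^2$, and the discrete semiconcavity estimate $\tfrac1{r^2}(S(\mu^1)-2S(\mu^0)+S(\mu^{-1}))\le\overline K(r)$ for some $W_2$-geodesic through $\mu^0$. The key facts to invoke are: (a) $\mathbb D_2$-convergence of mm-spaces gives a common ``ambient'' structure in which the spaces $X_n$ and $X$ embed isometrically (up to small error) and in which Wasserstein distances, hence also $W_2$-geodesics, converge; (b) the relative entropy is lower semicontinuous under weak convergence of measures along $\mathbb D_2$-converging spaces, and, crucially for the \emph{upper} bound in \eqref{robust}, one needs a matching \emph{upper} semicontinuity (or an approximation preserving entropy from above) for the reference measures $\mu^0$ — this is where one must be careful.

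First I would fix $r>0$ with $\overline K(r)<\infty$ and points $x,y\in X$ with $d(x,y)=r$, and a competitor $\mu^0\in\Pz(X)$ with $W(\mu^0,\delta_x)<r^4$. Using local compactness of $X$ and the $\mathbb D_2$-convergence, I would choose points $x_n,y_n\in X_n$ with $x_n\to x$, $y_n\to y$ and $d_n(x_n,y_n)\to r$; since the condition \eqref{robust} is stated for $d(x,y)=r$ exactly, I would first absorb the small discrepancy $d_n(x_n,y_n)=r+o(1)$ by noting that $\overline K$ is usc, and then treat the radius as $r_n:=d_n(x_n,y_n)$, applying the hypothesis in $X_n$ with radius $r_n$ (whose $\overline K(r_n)$ stays finite for large $n$ by uscness near $r$, or one shrinks $r$ slightly — a minor technical point to reconcile). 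Next I would transport $\mu^0$ to a measure $\mu^0_n\in\Pz(X_n)$ with $\mathbb D_2(\mu^0_n,\mu^0)\to 0$ and, importantly, with $S_n(\mu^0_n)\to S(\mu^0)$ — both the lower bound (automatic) and the upper bound, the latter obtained by an explicit mollification/coupling construction standard in this setting (pushing $\mu^0$ forward along the optimal coupling between $m$ and $m_n$, possibly convolving at scale $\delta_n\downarrow 0$). One checks $W(\mu^0_n,\delta_{x_n})<r_n^4$ for large $n$ since $W(\mu^0,\delta_x)<r^4$ strictly.

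Then I would apply the robust bound in $(X_n,d_n,m_n)$: there is a $W_2$-geodesic $(\mu^a_n)_{a\in[-1,1]}$ in $\Pz(X_n)$ with $W(\mu^1_n,\delta_{y_n})\le r_n^2$ and $\tfrac1{r_n^2}(S_n(\mu^1_n)-2S_n(\mu^0_n)+S_n(\mu^{-1}_n))\le\overline K(r_n)$. Now pass to the limit: the geodesics $(\mu^a_n)_a$, viewed in the ambient space, have uniformly bounded length ($=W(\mu^{-1}_n,\mu^1_n)\le W(\mu^{-1}_n,\delta_{x_n})+\cdots$, bounded), so by an Arzel\`a–Ascoli / compactness argument for Wasserstein geodesics they subconverge (uniformly in $a$) to a geodesic $(\mu^a)_{a\in[-1,1]}$ in $\Pz(X)$ passing through the limit of $\mu^0_n$, which is $\mu^0$; moreover $W(\mu^1,\delta_y)\le r^2$ by continuity (here the non-strict inequality $\le r^2$ is what makes the limit work cleanly). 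Finally, for the entropy inequality: $S(\mu^1)\le\liminf S_n(\mu^1_n)$ and $S(\mu^{-1})\le\liminf S_n(\mu^{-1}_n)$ by lower semicontinuity of entropy along $\mathbb D_2$-convergence, while $S_n(\mu^0_n)\to S(\mu^0)$ exactly by our construction; combining,
\begin{equation*}
\tfrac1{r^2}\big(S(\mu^1)-2S(\mu^0)+S(\mu^{-1})\big)\le\liminf_n\tfrac1{r_n^2}\big(S_n(\mu^1_n)-2S_n(\mu^0_n)+S_n(\mu^{-1}_n)\big)\le\limsup_n\overline K(r_n)\le\overline K(r),
\end{equation*}
the last step again by upper semicontinuity of $\overline K$. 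Since $\lim_{r\to0}\overline K(r)=K$ is inherited verbatim, this establishes that $(X,d,m)$ has robust synthetic upper Ricci bound $K$ with approximation function $\overline K$.

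The main obstacle I anticipate is the \emph{upper} semicontinuity of the entropy of the base competitor, i.e.\ constructing $\mu^0_n$ with $S_n(\mu^0_n)\to S(\mu^0)$ rather than merely $\liminf\ge$: lower semicontinuity goes the wrong way for the term $-2S(\mu^0)$. The fix is a recovery-sequence construction — one must exhibit $\mu^0_n$ explicitly (transport plus mollification at a scale tuned to the $\mathbb D_2$-distance, and truncation to control the entropy of the Gaussian-type mollifier against $m_n$), and this requires a mild regularity input on the spaces, e.g.\ the local uniform doubling / asymptotic-volume behaviour that $\mathbb D_2$-convergence to a locally compact limit provides, or alternatively an a priori density bound on $\mu^0$ (one may reduce to $\mu^0$ with bounded density and bounded support by a diagonal argument, since such measures are dense in energy for the inequality being proven). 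A secondary, purely bookkeeping obstacle is reconciling the exact radius $d(x,y)=r$ in the definition with the approximate radii $r_n=d_n(x_n,y_n)$; this is handled by the uscness of $\overline K$ and by the fact that all the constraints ($W(\mu^1,\delta_y)\le r^2$ and $W(\mu^0,\delta_x)<r^4$) involve powers of $r$ continuously, so small perturbations of $r$ are absorbed.
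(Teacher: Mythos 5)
Your proposal is correct and follows essentially the same route as the paper: choose nearby points $x_n,y_n\in X_n$, transport $\mu^0$ to a competitor $\mu^0_n\in\Pz(X_n)$, invoke the robust bound on $X_n$ to produce $W_2$-geodesics $(\mu^a_n)$, pass to a subsequential limit geodesic in $\Pz(X)$ through $\mu^0$, and then combine lower semicontinuity of $S$ at $a=\pm1$ with convergence $S_n(\mu^0_n)\to S(\mu^0)$ and upper semicontinuity of $\overline K$. The one place where the paper is more economical than your sketch is the obstacle you single out: rather than building $\mu^0_n$ by mollification plus truncation, the paper uses the canonical transportation map from \cite{SturmActa1} (disintegration of the near-optimal coupling of $m$ and $m_n$), which is entropy-decreasing by Jensen's inequality, so that $S_n(\mu^0_n)\le S(\mu^0)$ holds automatically and pairs with the standard lower semicontinuity $S(\mu^0)\le\liminf_n S_n(\mu^0_n)$ to give the needed convergence without any regularity input on $\mu^0$ or on the spaces. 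The paper also organizes the radius bookkeeping slightly differently (a fixed $\epsilon$ with $W(\mu^0,\delta_x)\le(r-\epsilon)^4-4\epsilon$, then a double limit $n\to\infty$ followed by $\epsilon\to0$), but this is the same mechanism you describe of absorbing the perturbed radii $r_n$ via the strict inequality and uscness of $\overline K$.
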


\bigskip

\noindent
{\bf Some notations.}
We use $\partial_t$ as a short hand notation for $\frac{d}{dt}$.
Moreover, we put
$\partial_t^+u(t)=\limsup_{s\to t}\frac1{t-s}(u(t)-u(s))$ and
$\partial_t^-u(t)=\liminf_{s\to t}\frac1{t-s}(u(t)-u(s))$.

\section{Synthetic Ricci Bounds for Metric Measure Spaces}
Throughout this paper, a \emph{metric measure space} (or briefly mm-space) will be a triple $(X,d,m)$ 
consisting of a Polish space $X$ equipped with a complete separable metric $d$ (inducing the topology of $X$) and a measure $m$ on the Borel $\sigma$-algebra of $X$. We 
will always assume that $m$ has full topological support and that $\int e^{-C d^2(x,z)}\,dm(x)<\infty$ for some $C\in\R$ and $z\in X$.

$W$ or $W_2$ 
will denote the $L^2$-Kantorovich-Wasserstein distance on the space $\Pz(X)$ of all probability measures on $X$. The Boltzmann entropy w.r.t.\ $m$ will be denoted by $S:\Pz(X)\to [-\infty,\infty]$. That is,
$$S(\mu)=\int u\log u\,dm$$
if $\mu$ is absolutely continuous with density $u$ and 
$\int (u\log u)_+\,dm<\infty$; otherwise
$S(\mu)=\infty$.

For $u\in L^2(X,m)$, the \emph{energy} (or Cheeger energy) can be represented 
as
$\E(u)=\frac12\int |Du|^2_*\,dm$
where $|Du|_*$ denotes the  minimal weak upper gradient of $u$.
The gradient flow for $\E$ on $L^2(X,m)$ defines uniquely the \emph{heat semigroup}
$$P_t: L^2(X,m)\to L^2(X,m).$$ 
Except in the last chapter, we always assume that this semigroup of operators extends 
to a 
 semigroup of Markov kernels
$\hat P_t: \Pz(X)\to \Pz(X)$  (`dual heat semigroup') such that
$\hat P_t(u\,m)=(P_t u)\, m$
for each nonnegative $u\in L^2(X,m)$ with $\int u\,dm=1$ and
 $\hat P_t\mu(.)=\int \hat P_t\delta_x(.)\, \mu(dx)$ for each $\mu\in\Pz(X)$.
In particular, this will be the case if $(X,d,m)$ satisfies an RCD$(K,\infty)$-condition for some $K\in\R$,
  \cite{AGS14Duke}.
 
We define functions $\RIC^+$, $\RIC^-$, and  $\RIC^\flat$  on $X\times X$ by
\begin{eqnarray*}\RIC^\pm(x,y)&:=&-\partial_t^\mp\log\Big(W\big(\hat P_t\delta_x,\hat P_t\delta_y\big)\Big)
\Big|_{t=0}\\
\RIC^\flat(x,y)&:=&\lim_{\epsilon\to0} \inf\Big\{
-\partial_t^+  \log W\big( \hat P_t\mu,\hat P_t\nu\big)\big|_{t=0}: \
\supp[\mu]\subset B_\epsilon(x), \supp[\nu]\subset B_\epsilon(y)\Big\}.
\end{eqnarray*}
Obviously, 
$$\RIC^\flat(x,y)\le \liminf_{(x',y')\to (x,y)}\RIC^-(x',y')\le \RIC^-(x,y)\le\RIC^+(x,y).$$

\begin{definition}
We say that a number $K$ is a strong synthetic upper Ricci bound for the mm-space $(X,d,m)$  if $\RIC^*(x)\le K$ for all $x\in X$ 
where
$$\RIC^*(x):=\limsup_{y,z\to x}\RIC^+(y,z).$$
\end{definition}

\subsection{Evolution of Curves under the Dual Heat Flow}

 For $p\in [1,\infty)$ we define the $p$-action 
of a curve $\gamma: [0,1]\to X, a\mapsto \gamma^a$
by
$$\action_p(\gamma^\cdot)
=\sup_{(a_i)_i} \sum_i\frac1{|a_i-a_{i-1}|^{p-1}}d(\gamma^{a_{i-1}},\gamma^{a_i})^p
=\limsup_{(a_i)_i} \sum_i\frac1{|a_i-a_{i-1}|^{p-1}}d(\gamma^{a_{i-1}},\gamma^{a_i})^p$$
where the  $\sup$ is taken over all partitions   $0=a_0<a_1<\ldots<a_n=1$ and the $\limsup$ over those with  mesh size going to 0. For $p=1$ this is the length $\length(\gamma^\cdot):=\action_1(\gamma^\cdot)$.
Analogously (with $W$ in the place of $d$), we define length and $p$-action for curves in $\Pz(X)$.

We will be particularly interested in the growth of length (and action) of curves which evolve under the dual heat flow.
Here any curve $a\mapsto\gamma^a$ in $X$ will be identified with the curve
$a\mapsto \mu_0^a:=\delta_{\gamma^a}$ in $\Pz(X)$ and its evolution under the dual heat flow is the curve $a\mapsto\mu_t^a:=\hat P_t\mu_0^a$.

\begin{definition}
We say that a number $K$ is a weak synthetic upper Ricci bound for the mm-space $(X,d,m)$  if  $\Theta^+(\gamma)\le K$ for each geodesic $\gamma^\cdot$ in $X$ where
$$\Theta^+(\gamma):=-\partial_t^-\log\big(\length(\hat P_t\delta_{\gamma^\cdot}
\big)\big|_{t=0}.$$
\end{definition}

\begin{remark} Slightly extending the previous definition, we say that a number $K$ is a $p$-weak synthetic upper Ricci bound for the mm-space $(X,d,m)$  if  $\Theta_p^+(\gamma)\le K$ for each geodesic $\gamma^\cdot$ in $X$ where
$$\Theta_p^+(\gamma):=-\frac1p\partial_t^-\log\big(\action_p(\hat P_t\delta_{\gamma^\cdot}
\big)\big|_{t=0}.$$
Since $\action_p(\mu^\cdot)\ge \big(\length(\mu^\cdot)\big)^p$ for each curve, with equality for each constant speed curve  and thus
$$-\frac1p\partial_t^-\log\big(\action_p(\hat P_t\delta_{\gamma^\cdot}\big)\big|_{t=0}\le -\partial_t^-\log\big(\length(\hat P_t\delta_{\gamma^\cdot}
\big)\big|_{t=0},$$
every weak synthetic upper Ricci bound is also a $p$-weak synthetic upper Ricci bound.
\end{remark}

According to the following Theorem, any strong synthetic upper Ricci bound is a weak synthetic upper Ricci bound.

\begin{theorem}
For every geodesic $(\gamma^a)_{a\in[0,1]}$ in $X$ and  every $p\in[1,\infty)$, the decay of the  $p$-action of the curve $a\mapsto \mu_t^a:=\hat P_t\delta_{\gamma^a}$ in  $(\Pz(X),W)$ is controlled by
\begin{equation}
-\frac1p\partial_t^- \log\action_p(\mu_t^\cdot)\big|_{t=0}\le \int_0^1 \RIC^*(\gamma^a)da.
\end{equation}
In particular,
$$\RIC^+(\gamma)\le\int_0^1 \RIC^*(\gamma^a)da.$$
\end{theorem}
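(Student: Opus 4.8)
The plan is to reduce the claimed bound on the decay rate of the $p$-action to a pointwise-in-$a$ estimate, and then to use the definition of $\RIC^*$ together with a compactness/uniform-continuity argument along the geodesic. First I would fix a geodesic $(\gamma^a)_{a\in[0,1]}$ and, for a partition $0=a_0<a_1<\dots<a_n=1$ of mesh size $\delta$, write the $p$-action of $\mu_t^\cdot$ as the supremum (equivalently $\limsup$ over fine partitions) of $\sum_i |a_i-a_{i-1}|^{1-p}\,W(\mu_t^{a_{i-1}},\mu_t^{a_i})^p$. The key point is that for each consecutive pair we can control the short-time growth of $W(\hat P_t\delta_{\gamma^{a_{i-1}}},\hat P_t\delta_{\gamma^{a_i}})$ by the quantity $\RIC^+(\gamma^{a_{i-1}},\gamma^{a_i})$ via the very definition of $\RIC^\pm$ as a logarithmic derivative: $-\partial_t^-\log W(\hat P_t\delta_x,\hat P_t\delta_y)|_{t=0}=\RIC^+(x,y)$. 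Summing over $i$, applying a discrete Hölder/subadditivity argument to pass the logarithmic-derivative bound through the $p$-th power sum, and using $-\partial_t^-\log\sum_i c_i^p(t) \le \sup_i\big(-\partial_t^-\log c_i^p(t)\big)$ type inequalities (superadditivity of $\liminf$ and the fact that the sum is dominated by its largest-rate term on short times), I expect to get
$$-\tfrac1p\partial_t^-\log\action_p(\mu_t^\cdot)\big|_{t=0}\le \limsup_{\delta\to0}\ \max_i \RIC^+(\gamma^{a_{i-1}},\gamma^{a_i}).$$

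Next I would estimate the right-hand side. For a partition of mesh $\delta$, the points $\gamma^{a_{i-1}}$ and $\gamma^{a_i}$ both lie within distance $O(\delta)$ of, say, $\gamma^{a_i}$; hence by definition of $\RIC^*(x)=\limsup_{y,z\to x}\RIC^+(y,z)$ we have $\limsup_{\delta\to0}\RIC^+(\gamma^{a_{i-1}},\gamma^{a_i})\le \RIC^*(\gamma^{a_i})$ when $a_i$ is fixed and $\delta\to0$. Converting the $\max_i$ over a shrinking partition into the integral $\int_0^1\RIC^*(\gamma^a)\,da$ requires an upper Riemann-sum argument for the upper semicontinuous (after taking $\limsup$) integrand $\RIC^*\circ\gamma$: along a constant-speed geodesic the weight $|a_i-a_{i-1}|$ is uniform, so $\max_i\RIC^+(\gamma^{a_{i-1}},\gamma^{a_i})$ need not directly converge to the integral, but after first passing to a genuine \emph{average} rather than a max — which is available because the $p$-action of a constant-speed curve equals $(\length)^p$ and the rate splits additively over subintervals in the limit — one gets $\int_0^1\RIC^*(\gamma^a)\,da$ by upper Riemann sums. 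Concretely, I would prove the sharper intermediate inequality $-\tfrac1p\partial_t^-\log\action_p(\mu_t^\cdot)|_{t=0}\le \sum_i |a_i-a_{i-1}|\cdot\big(\text{local rate on }[a_{i-1},a_i]\big)$ and then pass to the limit using that each local rate is bounded above by $\sup_{a\in[a_{i-1},a_i]}\RIC^*(\gamma^a)+o(1)$.

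The main obstacle is the interchange of limits: justifying that the $\limsup$ defining the $p$-action (over partitions with vanishing mesh) commutes, in the appropriate one-sided-derivative sense, with the $\partial_t^-$ at $t=0$, and that the short-time logarithmic derivative of a finite sum of powers $\sum_i c_i(t)^p$ is controlled by the worst (or, better, the weighted average of the) individual logarithmic derivatives. This is where one must be careful: $\partial_t^-$ is only a $\liminf$, so superadditivity goes the favorable way for a \emph{lower} bound on $-\partial_t^-\log$, i.e. the unfavorable way here; the trick will be to use instead that $\action_p(\mu_t^\cdot)\ge \sum_i|a_i-a_{i-1}|^{1-p}W(\mu_t^{a_{i-1}},\mu_t^{a_i})^p$ for \emph{every} fixed partition, take $-\partial_t^-\log$ of both sides (which reverses to give $\le$ on the action side), bound the finite-sum term from below by extracting the short-time asymptotics $W(\mu_t^{a_{i-1}},\mu_t^{a_i})=d(\gamma^{a_{i-1}},\gamma^{a_i})e^{-\RIC^+(\gamma^{a_{i-1}},\gamma^{a_i})t+o(t)}$ termwise, and only \emph{then} optimize over partitions. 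The "In particular" statement for $\RIC^+(\gamma)$ follows from the $p$-action statement by taking $p=1$, since $\action_1=\length$ and $\RIC^+(\gamma)$ is by definition $-\partial_t^-\log\length(\hat P_t\delta_{\gamma^\cdot})|_{t=0}$.
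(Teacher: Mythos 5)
Your proposal is correct and, once you work through the self-corrections in the final paragraph, it lands on essentially the same argument as the paper: fix a finite partition, use $\action_p(\mu_t^\cdot)\ge\sum_i|a_i-a_{i-1}|^{1-p}W(\mu_t^{a_{i-1}},\mu_t^{a_i})^p$, control each term from below for all small $t$ via the $\liminf$ defining $\RIC^+$, exploit constant speed so that $d(\gamma^{a_{i-1}},\gamma^{a_i})^p/|a_i-a_{i-1}|^{p-1}=|a_i-a_{i-1}|\cdot\action_p(\gamma^\cdot)$ (turning the sum into a weighted average of the rates rather than a max), and then pass from the partition average of $\RIC^+$ to $\int_0^1\RIC^*(\gamma^a)\,da$ using upper semicontinuity of $\RIC^*$.

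Two small points of hygiene. First, the short-time asymptotic should be stated as a one-sided inequality $W(\mu_t^{a_{i-1}},\mu_t^{a_i})\ge d(\gamma^{a_{i-1}},\gamma^{a_i})\,e^{-(\RIC^+(\gamma^{a_{i-1}},\gamma^{a_i})+\delta)t}$ valid for all $t\in(0,t_0)$ (this is what the $\liminf$ in the definition of $\RIC^+$ actually gives you); writing it as an equality $d\cdot e^{-\RIC^+ t+o(t)}$ tacitly assumes the $\liminf$ and $\limsup$ agree, which is not guaranteed. Second, the paper chooses the partition \emph{before} sending $t\to0$ — given $\delta>0$, usc of $\RIC^*$ yields a partition with $\RIC^+(\gamma^{a_{i-1}},\gamma^{a_i})\le\frac1{|a_i-a_{i-1}|}\int_{a_{i-1}}^{a_i}\RIC^*+\delta$ for all $i$, after which $t_0$ is chosen uniformly for the finitely many pairs — and this ordering is cleaner than first taking $t\to0$ for each fixed partition and afterwards optimizing over partitions, because the latter requires a separate argument that the mesh limit of the partition averages converges to the integral. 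Both routes work, but the paper's sequencing avoids that extra step. Your earlier detour through $\max_i$ rates and ``discrete Hölder/subadditivity'' is a dead end that you yourself abandon; only the final paragraph is the correct proof.
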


\begin{proof} Given $\gamma$ as above and $\delta>0$, by upper semicontinuity of $\RIC^*$ there exists a finite partition $0=a_0<a_1<\ldots<a_n=1$ such that   $\forall i$
$$\RIC^+(\gamma^{a_i},\gamma^{a_{i-1}})\le \frac1{|a_i-a_{i-1}|}\int_{a_{i-1}}^{a_i}\RIC^*(\gamma^a)da+\delta.$$
Fixing this finite partition,  there exists $t_0>0$ s.t. $\forall t\in (0,t_0), \forall i$
$$W\big(\mu_t^{a_i}, \mu_t^{a_{i-1}}\big)\ge e^{-\big(\RIC^+(\gamma^{a_i},\gamma^{a_{i-1}})+\delta\big)t}\cdot d\big(\gamma^{a_i}, \gamma^{a_{i-1}}\big).$$
Therefore,
\begin{eqnarray*}
\action_p(\mu_t^\cdot)&\ge&
\sum_i \frac1{|a_i-a_{i-1}|^{p-1}}W\big(\mu_t^{a_i}, \mu_t^{a_{i-1}}\big)^p\\
&\ge&
\sum_i \frac1{|a_i-a_{i-1}|^{p-1}}\Big[1-\Big(\RIC^+(\gamma^{a_i},\gamma^{a_{i-1}})+\delta\Big)pt\Big]\cdot d\big(\gamma^{a_i}, \gamma^{a_{i-1}}\big)^p\\
&=&
\action_p(\gamma^\cdot)\cdot \Big[1-\delta pt -pt\cdot \sum_i
\RIC^+(\gamma^{a_i},\gamma^{a_{i-1}})\cdot |a_i-a_{i-1}|\Big]\\
&\ge&
\action_p(\gamma^\cdot)\cdot \Big[1-2\delta pt -pt\cdot \int_0^1
\RIC^*(\gamma^a)\,da\Big]
\end{eqnarray*}
and thus
$$-\partial_t^- \log\action_p(\mu_t^\cdot)\big|_{t=0}\le2\delta p + p\, \int_0^1
\RIC^*(\gamma^a)\,da.$$
Since $\delta>0$ was arbitrary this proves the claim.
\end{proof}

\subsection{Example: The Cone} 
 
 Let us consider in detail  the mm-space $(X,d,m)$   where $X$ is the cone over a circle of  length $\alpha<2\pi$, equipped with the cone metric $d$ and the two-dimensional Lebesgue measure $m$.
 Denote the vertex by $o$. Then the `punctured cone' $X\setminus \{o\}$ is a Riemannian manifold with vanishing Ricci curvature.

\begin{theorem}  
\begin{itemize}
\item[(i)] $\RIC^\pm(x,z)=\infty$ for all $x\in X$
\item[(ii)] $\RIC^\pm(x,y)=0$ for all $x,y\in X\setminus \{o\}$
\item[(iii)] $\partial_t \length(\hat P_t\delta_{\gamma^\cdot})\big|_{t=0}=-\infty$
for each geodesic $(\gamma^a)_{a\in[0,1]}$ in $X$ which emanates or terminates in the vertex (i.e.
 $\gamma^0=o$ or $\gamma^1=o$)
 \item[(iv)]   $\partial_t \length(\hat P_t\delta_{\gamma^\cdot})\big|_{t=0}=0$ for all other geodesic $(\gamma^a)_{a\in[0,1]}$ in $X$.
\end{itemize}
\end{theorem}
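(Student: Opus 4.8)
The plan is to reduce everything to an explicit computation on the punctured cone $X \setminus \{o\}$, which is flat, and to isolate the singular contribution at the vertex. First I would record the basic structure of the heat kernel on the cone: writing points in polar-type coordinates $(r,\vartheta)$ with $r = d(x,o)$ and $\vartheta \in \R/\alpha\Z$, the cone is locally isometric to $\R^2$ away from $o$, so on compact subsets of $X \setminus \{o\}$ the heat kernel $p_t(x,y)$ agrees to first order in $t$ with the Euclidean heat kernel (this is a standard parametrix/comparison statement; for the cone it is classical — one may also use the explicit Sommerfeld-type formula for the heat kernel on a cone). From this, for $x, y$ in a fixed compact subset of $X \setminus \{o\}$, one gets $\hat P_t \delta_x \to \delta_x$ at the Euclidean rate and, more importantly, the short-time asymptotics $W(\hat P_t\delta_x, \hat P_t\delta_y) = d(x,y)\big(1 + o(t)\big)$, exactly as in $\R^n$ where the Bakry-Émery Ricci tensor vanishes. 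Plugging into the definitions of $\RIC^\pm$ gives part (ii), and part (iv) follows by the same computation applied uniformly along any geodesic $\gamma$ that stays away from $o$ (its image is compact in $X\setminus\{o\}$), since then $\partial_t \length(\hat P_t\delta_{\gamma^\cdot})\big|_{t=0} = 0$ — here one can either differentiate under the $\sup$ defining $\length$ using the uniform estimate, or invoke Proposition~1.7 / the Ricci-integral bound with $\Ric_f \equiv 0$ on the flat punctured cone together with the fact that the relevant curves never see the vertex.

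The heart of the matter is parts (i) and (iii): the vertex. Here the key phenomenon is that heat started at (or near) a point is \emph{attracted} toward the vertex — equivalently, mass flowing past the cone angle $\alpha < 2\pi$ behaves as if repelled by negative-infinite-distance curvature, so two heat flows straddling $o$ are pushed \emph{apart} faster than any exponential rate, forcing $W(\hat P_t\delta_x,\hat P_t\delta_z)/d(x,z) \to \infty$ superlinearly, i.e. $\RIC^\pm = -\infty$... no — one must be careful about signs: $\RIC^+ = -\liminf \frac1t\log(W(\hat P_t\delta_x,\hat P_t\delta_z)/d(x,z))$, so $\RIC^+ = +\infty$ corresponds to $W(\hat P_t\delta_x,\hat P_t\delta_z)$ \emph{collapsing} faster than exponentially, i.e. the two heat flows approach each other super-exponentially fast. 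This is precisely what the cone angle deficit does: the two Dirac masses, once smeared, overlap through the vertex, and the cone-angle defect makes the "effective distance" between the smeared measures drop like $d(x,z) - c\sqrt{t}$ or faster for any $x$ (taking $z$ on the "other side" of $o$, or even $z=o$ itself). The clean way to prove the bound $W(\hat P_t\delta_x, \hat P_t\delta_z) \le d(x,z)e^{-t/\epsilon}$-type estimates for all $\epsilon$ is to compare with a model: unfold the cone into a Euclidean sector of angle $\alpha$ and use that the heat kernel there, with appropriate reflection, has strictly more mass near the apex than the Euclidean one; then construct an explicit (sub-optimal) coupling of $\hat P_t\delta_x$ and $\hat P_t\delta_z$ whose cost is $\le d(x,z)(1 - c_\epsilon t)$ for every $c_\epsilon$, by transporting the "excess mass near $o$" of one flow onto that of the other at zero or negligible cost. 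Letting the coupling improve as $\epsilon \to 0$ yields $\liminf_{t\to0}\frac1t\log(W/d) = -\infty$, hence $\RIC^\pm(x,z) = +\infty$. For (iii), a geodesic with $\gamma^0 = o$ has $\hat P_t\delta_{\gamma^0} = \hat P_t\delta_o$, and the length $\length(\hat P_t\delta_{\gamma^\cdot})$ is dominated near $a=0$ by the same collapsing estimate $W(\hat P_t\delta_o, \hat P_t\delta_{\gamma^a}) \le d(o,\gamma^a)(1 - c_\epsilon t)$ uniformly in small $a$; summing over a partition and optimizing gives $\partial_t\length|_{t=0} = -\infty$.

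The main obstacle I anticipate is making the vertex estimate fully rigorous: one needs quantitative control of the heat kernel $\hat P_t\delta_x$ near $o$ when $x$ itself is close to (or equal to) $o$, where the parametrix comparison with $\R^2$ degenerates. The cleanest route is to exploit the exact scaling symmetry of the cone — $X$ is a metric cone, so $(X, \lambda d, m)$ is isometric to $(X, d, \lambda^{-2}m)$ up to scaling of the heat flow, reducing all $t\to 0$ questions at $o$ to a single fixed-time statement about $\hat P_1\delta_o$ versus $\hat P_1\delta_x$ — and then to use the known explicit eigenfunction expansion of the cone Laplacian (Bessel functions $J_{\nu}$ with $\nu = k\cdot 2\pi/\alpha$) to show the density of $\hat P_1\delta_o$ decays strictly slower at infinity along the cone than a Gaussian would in the "missing angle" directions, which is what powers the cheap transport. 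Alternatively, and perhaps more in the spirit of the paper, one can cite or adapt the forthcoming Erbar–Sturm result for $N$-cones, of which this is the prototypical case $N=1$; I would present the self-contained scaling-plus-Bessel argument for the circle cone and remark that the general statement follows from \cite{ErSt}.
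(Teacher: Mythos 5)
Your overall structure is sound, and you correctly identify the crucial phenomenon — a $\sqrt t$-order drop in $W(\hat P_t\delta_o, \hat P_t\delta_y)$ when one endpoint is the vertex. Parts (ii) and (iv) are fine as sketched: on compact subsets of the flat punctured cone one has $W(\hat P_t\delta_x,\hat P_t\delta_y) = d(x,y) + o(t)$, whence $\RIC^\pm(x,y) = 0$ and $\partial_t\length|_{t=0} = 0$; this is exactly what the paper's Lemma 2.9 provides for the ``else'' case.

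The gap is in your account of the vertex case, parts (i) and (iii). Your ``excess mass near $o$'' coupling heuristic does not in fact produce a $\sqrt t$-scale saving. The vertex heat kernel $\hat P_t\delta_o$ is radially an exact Euclidean Gaussian — its radial marginal $\propto r e^{-r^2/4t}\,dr$ is identical to that on $\R^2$ — so there is no radial excess mass to transport cheaply; the gain comes entirely from the angular mismatch. Namely $\hat P_t\delta_o$ spreads over angle $\alpha$ while $\hat P_t\delta_y$ locally spreads over the full $2\pi$, and any coupling must compress angles by $\alpha/(2\pi)$; the resulting cross term has mean $\frac1{2\pi}\int_{-\pi}^\pi\cos\big(\tfrac{\alpha}{2\pi}\psi\big)\,d\psi = \tfrac2\alpha\sin\tfrac\alpha2 > 0$ (which vanishes when $\alpha=2\pi$), and because the radial moment $\int_0^\infty r^2 e^{-r^2/4t}\,dr\sim t^{3/2}$ divides by the $O(t)$ normalization, the net saving is $\sqrt{\pi t}\cdot\tfrac2\alpha\sin\tfrac\alpha2$. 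This is precisely what the paper computes in polar coordinates in its Lemma. Moreover, even on your weaker target $W \le d\,(1-c_\epsilon t)$, transporting an $O(t)$-sized excess mass at reduced cost gives at best a \emph{fixed} constant $c$, not $c_\epsilon$ for every $\epsilon$, so it does not by itself yield $\RIC^\pm = \infty$, and it certainly does not give the precise $\sqrt t$ coefficient needed to conclude $\partial_t\length|_{t=0}=-\infty$ in (iii). To close this you must actually perform the explicit Gaussian-in-polar-coordinates coupling (as the paper does) or flesh out the scaling-plus-Bessel route with enough precision to extract the leading $\sqrt t$ term.
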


\begin{proof}
The assertions  immediately follow from the following precise heat kernel asymptotic on the cone. 
\end{proof}

\begin{corollary} The cone does not admit any strong synthetic upper Ricci bound nor any weak synthetic upper Ricci bound.
\end{corollary}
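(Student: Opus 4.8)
The plan is to deduce the Corollary directly from the preceding Theorem, reducing both claims to the identification of a single offending geodesic. First I would recall from the Theorem that for the vertex $o$ and any point $x\in X$ we have $\RIC^\pm(o,x)=\infty$, hence a fortiori
\begin{equation*}
\RIC^*(o)=\limsup_{y,z\to o}\RIC^+(y,z)\ge \RIC^+(o,x)=\infty.
\end{equation*}
Thus there is no finite number $K$ with $\RIC^*(x)\le K$ for all $x\in X$; by definition the cone admits no strong synthetic upper Ricci bound. (I would note here that taking the $\limsup$ as $y,z\to o$ can only make things worse, so the pointwise value $\RIC^*(o)=+\infty$ is legitimate even if one worries about the limsup; one simply uses the sequence $y_n=z_n\to o$ together with a fixed $x$, or directly $\RIC^+(o,x)=\infty$.)

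For the weak bound I would invoke part (iii) of the Theorem. Pick any geodesic $(\gamma^a)_{a\in[0,1]}$ emanating from the vertex, say $\gamma^0=o$ and $\gamma^1=y$ with $y\neq o$; such a geodesic exists since $X$ is a geodesic space. Part (iii) gives
\begin{equation*}
\partial_t \length\big(\hat P_t\delta_{\gamma^\cdot}\big)\big|_{t=0}=-\infty,
\end{equation*}
and since $\length(\hat P_0\delta_{\gamma^\cdot})=\length(\gamma^\cdot)=d(o,y)\in(0,\infty)$ is finite and positive, this forces
\begin{equation*}
\Theta^+(\gamma)=-\partial_t^-\log\big(\length(\hat P_t\delta_{\gamma^\cdot})\big)\big|_{t=0}=+\infty.
\end{equation*}
Indeed, writing $\ell(t):=\length(\hat P_t\delta_{\gamma^\cdot})$, near $t=0$ the chain rule (or an elementary $\liminf$ estimate, using $\log\ell(t)-\log\ell(0)\le \frac{1}{\ell(0)}(\ell(t)-\ell(0))+o(\ell(t)-\ell(0))$ together with the fact that $\ell$ is nonincreasing near $0$) yields $\partial_t^-\log\ell(t)|_{t=0}=-\infty$ whenever $\partial_t\ell(t)|_{t=0}=-\infty$ and $\ell(0)>0$. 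Hence $\Theta^+(\gamma)=\infty$ exceeds every finite $K$, so the cone admits no weak synthetic upper Ricci bound either.

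The only genuine subtlety — and the step I would be most careful about — is the passage from the derivative statement $\partial_t\ell(t)|_{t=0}=-\infty$ to the logarithmic-derivative statement $\partial_t^-\log\ell(t)|_{t=0}=-\infty$, since $\Theta^+$ is defined through the one-sided Dini derivative $\partial_t^-$ of $\log\ell$ rather than of $\ell$ itself; one must make sure no cancellation or sign issue arises from the $\liminf$ in $\partial_t^-$. This is harmless because $\ell(0)$ is a fixed positive number and $\log$ is smooth and increasing there, so divergence to $-\infty$ of the difference quotients of $\ell$ transfers verbatim to those of $\log\ell$; I would spell this out in one line. Everything else is a direct quotation of the Theorem, so no further work is needed.
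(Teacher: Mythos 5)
Your proposal is correct and follows exactly the route the paper leaves implicit: part (i) of the Theorem forces $\RIC^*(o)=\infty$ (by taking $y=o$ and $z\to o$, which is the clean way to phrase your ``sequence'' remark), so no finite $K$ can be a strong bound, and part (iii) together with the elementary observation that $\partial_t\ell(t)|_{t=0}=-\infty$ with $\ell(0)\in(0,\infty)$ implies $\partial_t^-\log\ell(t)|_{t=0}=-\infty$ forces $\Theta^+(\gamma)=\infty$ for any geodesic emanating from the vertex, killing the weak bound. The care you take with the chain rule for the Dini derivative is appropriate, and there is nothing missing.
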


\begin{remark} In contrast to 
$\RIC^+$ and $\RIC^-$, the quantity $\RIC^\flat$ 
will not be able to detect the positive curvature of the cone which is concentrated in one point, the vertex.
Indeed, 
  the equivalence $\RIC^\flat=\ric$ (Theorem \ref{thm-prec}) and the fact that no optimal transport will move mass through the vertex will imply
  $\RIC^\flat(x,y)=0$ for all $x,y\in X$.
\end{remark}

\begin{lemma}  \label{cone}
For the cone over the circle of length $\alpha<2\pi$
\begin{equation*}
W\big(\hat P_t\delta_x,  \hat P_t\delta_y\big)=\left\{
\begin{array}{ll}
d(x,y){-\sqrt{\pi t}}\cdot \frac2\alpha\sin\frac\alpha2 + {\it O}( t), \quad \mbox{if $x$ or $y$ is the vertex}\\
d(x,y)+{\it o}( t), \quad \mbox{else}.
\end{array}\right.
\end{equation*}
\end{lemma}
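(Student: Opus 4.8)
The plan is to reduce the statement to three ingredients: an explicit formula for $\hat P_t\delta_o$, the flat law of cosines on the cone, and the standard short-time expansion of the heat semigroup near a non-vertex point. Since $\alpha<2\pi$, the cone $(X,d,m)$ is a two-dimensional Alexandrov space of nonnegative curvature, hence $\mathrm{RCD}(0,2)$; in particular $\hat P_t$ is well defined and $W$-contractive. Writing $r(\cdot)=d(o,\cdot)$, the Laplacian on radial functions is $\partial_r^2+\frac1r\partial_r$, exactly as in the plane; so by rotational invariance of $\hat P_t\delta_o$, uniqueness of the (mass-conserving) heat flow, and the normalization $\alpha\int_0^\infty u_t(r)\,r\,dr=1$, the measure $\hat P_t\delta_o$ has $m$-density $u_t(r)=\frac{1}{2\alpha t}e^{-r^2/4t}$. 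Its moments are then elementary; writing $\angle_o(z,x)$ for the angle at the vertex between the rays $[o,z]$ and $[o,x]$, the ones we use are $\int r(z)^2\,d\hat P_t\delta_o(z)=4t$ and $\int r(z)\cos\angle_o(z,x)\,d\hat P_t\delta_o(z)=\frac{2\sqrt\pi}{\alpha}\sin\frac\alpha2\cdot\sqrt t$ for fixed $x\ne o$.

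Suppose first that $x$ or $y$ is the vertex, say $y=o$, and set $r=d(x,o)$. Since $\alpha<2\pi$ no geodesic passes through $o$, so $d$ obeys the flat law of cosines $d(z,w)^2=r(z)^2+r(w)^2-2\,r(z)r(w)\cos\angle_o(z,w)$ for all $z,w$. For the upper bound I would use the product coupling $\hat P_t\delta_o\otimes\hat P_t\delta_x$: inserting the law of cosines, expanding $r(w)=r+O(\sqrt t)$ and $\cos\angle_o(z,w)=\cos\psi_z+O(\sqrt t/r)$ about $w=x$ (where $\psi_z$ is the angle of $z$ relative to the ray through $x$), and using that $\hat P_t\delta_x$ has variance $O(t)$ and barycenter $x+o(\sqrt t)$ -- via the expansion $\int f\,d\hat P_t\delta_x=f(x)+t\Delta f(x)+O(t^2)$ applied to a cutoff normal coordinate at the regular point $x$ -- the cross terms contribute only $O(t)$ and one gets $W(\hat P_t\delta_o,\hat P_t\delta_x)^2\le r^2-\frac{4\sqrt\pi r}{\alpha}\sin\frac\alpha2\sqrt t+O(t)$. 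For the lower bound the law of cosines gives, for \emph{any} coupling, $d(z,w)^2\ge r(w)^2-2\,r(z)r(w)\cos\angle_o(z,w)$; integrating, restricting to the set $\{d(x,w),r(z)\le t^{1/4}\}$ (whose complement carries coupling-mass $O(e^{-c/\sqrt t})$ by Gaussian tail bounds), using the same expansions, Cauchy--Schwarz on the cross terms, and the barycenter estimate, one obtains the reverse inequality up to $O(t)$. Hence $W(\hat P_t\delta_o,\hat P_t\delta_x)^2=r^2-\frac{4\sqrt\pi r}{\alpha}\sin\frac\alpha2\sqrt t+O(t)$, and a square root yields the claimed $d(x,o)-\sqrt{\pi t}\cdot\frac2\alpha\sin\frac\alpha2+O(t)$.

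Now suppose $x,y\ne o$. The upper bound $W(\hat P_t\delta_x,\hat P_t\delta_y)\le d(x,y)$ follows from $\CD(0,\infty)$-contractivity applied to $\delta_x,\delta_y$. For the lower bound, near $(x,y)$ the function $(z,w)\mapsto d(z,w)$ is the minimum of the (at most two, since $\alpha<2\pi$) smooth length functions attached to the minimal geodesics between $x$ and $y$, each of which is geodesically convex because $X$ is flat off $o$; hence $d$ lies above a subtangent at $(x,y)$, namely $d(z,w)\ge d(x,y)+\langle s_x,\exp_x^{-1}z\rangle+\langle s_y,\exp_y^{-1}w\rangle$ with $|s_x|,|s_y|\le1$. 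Squaring, integrating against the optimal coupling on $\{|\exp_x^{-1}z|,|\exp_y^{-1}w|\le t^{1/4}\}$, discarding the (nonnegative) remainder, and using the barycenter estimates at $x$ and at $y$ together with the Gaussian tail bound, one gets $W(\hat P_t\delta_x,\hat P_t\delta_y)^2\ge d(x,y)^2-O(t^2)$, hence $W(\hat P_t\delta_x,\hat P_t\delta_y)=d(x,y)+o(t)$. This argument is insensitive to whether $x\in\mathrm{Cut}(y)$, which for $\alpha<2\pi$ happens precisely when $x,y$ lie on opposite rays and the minimum above has two branches.

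I expect the only genuinely delicate point to be the $O(t)$ error in the vertex case. The triangle inequality only gives $|W(\hat P_t\delta_o,\hat P_t\delta_x)-W(\hat P_t\delta_o,\delta_x)|\le W(\hat P_t\delta_x,\delta_x)$, which is of order $\sqrt t$ and would destroy the $\sqrt t$-coefficient we are computing; one must instead exploit that the short-time spreading of $\hat P_t\delta_x$ about $x$ is isotropic and centred (barycenter $x+o(\sqrt t)$), so that it couples to the essentially radial bulk transport towards $o$ only at order $t$. This is exactly why the two quantitative inputs -- the exact, curvature-free cone law of cosines and the heat-kernel expansion at a regular point -- must enter; everything else is routine bookkeeping of Gaussian tails.
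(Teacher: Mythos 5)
Your strategy -- the exact radial Gaussian for $\hat P_t\delta_o$, the cone law of cosines, and a coupling computation -- is the same one the paper uses, but your execution of the vertex case is in fact more rigorous. The paper writes down a specific monotone angular coupling, asserts without justification that it is optimal (and that the $z_2$-contribution is $O(t)$), and evaluates its cost; you instead obtain the upper bound from the product coupling and the lower bound by discarding the nonnegative $r(z)^2$-term in the law of cosines and bounding the cross term over \emph{all} couplings via Cauchy--Schwarz. That cleanly separates the two inequalities, and the computation $\int r\,d\hat P_t\delta_o=\sqrt{\pi t}$, $\frac1\alpha\int_0^\alpha\cos\min(\phi,\alpha-\phi)\,d\phi=\frac2\alpha\sin\frac\alpha2$ matches the paper's integrals. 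The only bookkeeping you should double-check is that the Cauchy--Schwarz bound $\int r(z)\,|w-x|\,dq\le\sqrt{4t}\sqrt{O(t)}=O(t)$ really does absorb the first-order term in the $w$-expansion for an arbitrary coupling; it does, because $|\nabla_w(r(w)\cos\angle_o(z,w))|$ is bounded uniformly in $z$, so the barycenter estimate is not even needed on the lower-bound side.

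There is, however, a genuine gap in the non-vertex branch: the sentence ``This argument is insensitive to whether $x\in\mathrm{Cut}(y)$'' is wrong, and the subtangent inequality you invoke fails precisely there. When $x$ and $y$ lie on opposite rays at angular distance exactly $\alpha/2$, the distance near $(x,y)$ is $d=\min(\ell_1,\ell_2)$ with two smooth branches whose gradients at $(x,y)$, $(s_x^{(1)},s_y^{(1)})$ and $(s_x^{(2)},s_y^{(2)})$, are \emph{distinct}. A minimum of two affine functions meeting at a point has no affine minorant through that point, so the bound $d(z,w)\ge d(x,y)+\langle s_x,\exp_x^{-1}z\rangle+\langle s_y,\exp_y^{-1}w\rangle$ with a \emph{single} pair $(s_x,s_y)$ simply does not hold. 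The most one can write is $d\ge\min(L_1,L_2)=d(x,y)+\tfrac12\langle v_1+v_2,u\rangle-\tfrac12|\langle v_1-v_2,u\rangle|$, and against any coupling the first term is killed by the barycenter estimate while the second contributes $-\Theta(\sqrt t)$, not $O(t^2)$. In fact choosing the coupling that anti-correlates the transverse $\mathcal{N}(0,2t)$-fluctuations at $x$ and $y$ gives $W(\hat P_t\delta_x,\hat P_t\delta_y)\le d(x,y)-c\sqrt t+O(t)$ with $c>0$ proportional to $|v_1-v_2|=2\cos\frac\alpha4$; this is exactly the mechanism the paper itself flags in the remark about antipodal points on the flat torus. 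So your proof establishes the stated $o(t)$-expansion only for non-vertex pairs with a unique geodesic. (To be fair, the paper's own proof of this case -- ``follows from Riemannian calculations and the fact that the punctured cone is flat'' -- is equally silent on the cut locus, so this is as much an imprecision of the Lemma as of your argument; but you made an explicit claim about the cut locus which does not hold.)
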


\begin{proof}
The claim for $x,y$ away from the vertex follows from Riemannian calculations and the fact that the punctured cone is flat.

Now let us assume that $x$ is in the vertex.
We introduce coordinates on $X$ (ignoring some double labelling) such that
$$X=\{(z_1,z_2)\in\R^2: \cot\frac\alpha2 \cdot |z_2|\le z_1\}=\{(r\cos\phi,r\sin\phi): r\in\R_+, \phi\in [-\frac\alpha2,\frac\alpha2]\}$$
and $x=(0,0), y=(R,0)$. 
Then  $$\hat P_t\delta_x(dz)=\frac1{2\alpha t}e^{-\frac{r^2}{4t}}r\,dr\,d\phi$$ 
for $z=(r\cos\phi,r\sin\phi)$ (`exact Gaussian')
whereas $\hat P_t\delta_y$ for $t\to0$ can be approximated by a Gaussian
$$\hat P_t\delta_y(dz)=\frac1{4\pi t}e^{-\frac{r^2}{4t}}r\,dr\,d\psi+{\it O}(t)$$ 
for $z=(R+r\cos\psi,r\sin\psi)$.
For the optimal coupling of them (for fixed $R>0$ and $t\approx 0$) the $z_2$-distributions will be irrelevant (more precisely, their contribution will be ${\it O}(t)$).
The optimal coupling of the $z_1$-distributions is given by pairing $((R+r)\cos\psi,(R+r)\sin\psi)$ with
$(r\cos(\frac\alpha{2\pi}\psi),r\sin(\frac\alpha{2\pi}\psi))$ for $r>0$ and $\psi\in [-\pi,\pi]$. Therefore
\begin{eqnarray*}
W^2\big( \hat P_t\delta_x,  \hat P_t\delta_y\big)&=&\frac1{4\pi t}\int_0^\infty
\int_{-\pi}^\pi \big| R+r\cos\psi-r\cos (\frac\alpha{2\pi}\psi)\big|^2d\psi
\, e^{-\frac{r^2}{4t}}r\,dr+{\it O}(t)\\
&=&
\frac1{4\pi t}\int_0^\infty
\int_{-\pi}^\pi \big[ R^2-2Rr\cos (\frac\alpha{2\pi}\psi)\big]d\psi
\, e^{-\frac{r^2}{4t}}r\,dr+{\it O}(t)\\
&=& R^2-2R\sqrt{\pi t} \cdot \frac2\alpha\sin\frac\alpha2+{\it O}(t)
\end{eqnarray*} and thus
\begin{eqnarray*}
W\big( \hat P_t\delta_x,  \hat P_t\delta_y\big)
&=& R-\sqrt{\pi t} \cdot \frac2\alpha\sin\frac\alpha2+{\it O}(t).
\end{eqnarray*} 
\end{proof}

In a follow-up paper together with Erbar \cite{ErSt} we could deduce a far reaching generalization of the previous Theorem.

\begin{theorem} 
Assume that $(X,d,m)$ satisfies the curvature-dimension condition RCD$(K',N')$ for some finite numbers $K',N'$ and that it is the $N$-cone over some mm-space $(X',d',m')$. Then 
\begin{itemize}
\item[(i)] either  $\RIC^\pm(o,y)=\infty$  for all $y$ and $\RIC^\pm(\gamma)=\infty$ for each geodesic emanating from the tip 
\item[(ii)] or $N$ is an integer and $(X,d,m)$ is isomorphic to $(\R^{N+1}, |\,.\,|, dx)$. 
\end{itemize}
In particular, (up to isomorphism) the only Ricci bounded $N$-cone among all mm-spaces is $(\R^{N+1}, |\,.\,|, dx)$. 
\end{theorem}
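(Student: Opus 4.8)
The plan is to reduce the claimed dichotomy to a sharp short-time expansion of $W\big(\hat P_t\delta_o,\hat P_t\delta_y\big)$, generalising the explicit computation behind Lemma~\ref{cone}, and then to feed the vanishing of its leading correction into a rigidity theorem for RCD cones.

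First I would pin down the heat flow issuing from the tip. Write the $N$-cone $X=C_N(X')$ in polar coordinates $(r,\xi)\in(0,\infty)\times X'$. The dilations $D_\lambda\colon(r,\xi)\mapsto(\lambda r,\xi)$ multiply $d$ by $\lambda$ and $m$ by $\lambda^{N+1}$, hence intertwine the heat semigroups via $\hat P_t\circ(D_\lambda)_*=(D_\lambda)_*\circ\hat P_{t/\lambda^2}$; since $(D_\lambda)_*\delta_o=\delta_o$ and $\hat P_t\delta_o$ is invariant under all isometries of the link, an expansion over the eigenfunctions of $-\Delta_{X'}$ — only the constant link-mode survives against the point source at the tip — forces $\hat P_t\delta_o$ to be the self-similar radial Gaussian $\hat P_t\delta_o=\frac{c_N}{m'(X')}\,t^{-(N+1)/2}e^{-r^2/4t}\,m$. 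From $\Delta\, d(o,\cdot)=N/d(o,\cdot)$ and $|\nabla d(o,\cdot)|\equiv1$ one then gets $\int d(o,\cdot)^2\,d\hat P_t\delta_o=2(N+1)t$ exactly, while the radial marginal of $\hat P_t\delta_o$ has first moment $\beta_N\sqrt t$ with $\beta_N=2\,\Gamma(\tfrac{N+2}2)/\Gamma(\tfrac{N+1}2)>0$.

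Next I would estimate $W\big(\hat P_t\delta_o,\hat P_t\delta_y\big)$ for $y=(R,\xi_y)$ with $R=d(o,y)>0$. On the one hand, the $1$-Lipschitz function $u\colon(r,\xi)\mapsto r\cos\!\big(d_{X'}(\xi,\xi_y)\wedge\pi\big)$ fed into Kantorovich duality gives a lower bound $W\big(\hat P_t\delta_o,\hat P_t\delta_y\big)\ge R-c(y)\sqrt t+o(\sqrt t)$; on the other hand the cone law of cosines $d^2\big((r_1,\xi_1),(r_2,\xi_2)\big)=r_1^2+r_2^2-2r_1r_2\cos\!\big(d_{X'}(\xi_1,\xi_2)\wedge\pi\big)$ together with a coupling that matches the (universal, $\R^{N+1}$-like) radial profiles of the two blobs and compresses the local spread of $\hat P_t\delta_y$ around $y$ onto the angular range of the link — exactly the coupling used to prove Lemma~\ref{cone} — gives the matching upper bound, so that $c(y)\ge0$ is the genuine leading coefficient. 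At a $y$ with Euclidean tangent cone one finds $c(y)=\beta_N\, m'(X')^{-1}\!\int_{X'}\cos\!\big(d_{X'}(\xi,\xi_y)\wedge\pi\big)\,dm'(\xi)$, whereas at a $y$ lying over a singular point of the link (where the tangent cone is itself a nontrivial metric cone) the same analysis localised at $y$ produces a strictly positive $\sqrt t$-defect coming from the singularity of $\hat P_t\delta_y$ itself. The $o(\sqrt t)$ remainders are controlled uniformly by the Gaussian heat-kernel bounds available under RCD$(K',N')$.

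Granting this expansion, the dichotomy follows: whenever $c(y)>0$ one has $\tfrac1t\log\big(W(\hat P_t\delta_o,\hat P_t\delta_y)/d(o,y)\big)\to-\infty$ as $t\to0$, hence $\RIC^\pm(o,y)=\infty$; applying the estimate to the first sub-arc of a geodesic $\gamma$ with $\gamma^0=o$, and observing that the remaining sub-arcs contribute only $O(t)$ to $\length(\hat P_t\delta_{\gamma^\cdot})$ as in part (iii) of the cone example, gives $\RIC^\pm(\gamma)=\infty$. It remains to prove that $c(y)=0$ for \emph{every} $y$ forces $X\cong(\R^{N+1},|\cdot|,dx)$, and this rigidity step is the main obstacle. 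Here I would invoke Ketterer's characterisation of RCD cones: the hypothesis that $C_N(X')$ is RCD$(K',N')$ forces it to be RCD$(0,N+1)$ with $(X',d',m')$ an RCD$(N-1,N)$ space of diameter $\le\pi$, and $C_N(X')\setminus\{o\}$ has a singular direction unless $X'$ is itself non-singular. Vanishing of the defect everywhere thus says $\int_{X'}\cos\!\big(d_{X'}(\cdot,\xi_y)\wedge\pi\big)\,dm'=0$ for all $\xi_y$ and that there are no singular directions; combined with the Obata/Lichnerowicz-type rigidity for RCD$(N-1,N)$-spaces — first nonzero eigenvalue equal to $N$ with distance-cosines as eigenfunctions, and the maximal-diameter rigidity, cf.\ Ketterer and Cavalletti--Mondino — this identifies $X'$ with the unit round sphere $S^N$, whence $N\in\N$ and $C_N(S^N)=(\R^{N+1},|\cdot|,dx)$. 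The concluding assertion, that the only synthetically Ricci-bounded $N$-cone is $\R^{N+1}$, is then immediate, since alternative~(i) is incompatible with any strong or weak synthetic upper Ricci bound. The delicate points are the uniform control of the remainders, the behaviour of $\hat P_t\delta_y$ over singular points of the link, and making the final sphere rigidity effective enough to conclude from the vanishing of $c$ alone.
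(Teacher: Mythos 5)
The theorem you are attempting to prove is not proven in this paper: the sentence immediately before it reads ``In a follow-up paper together with Erbar [ES17+] we could deduce a far reaching generalization of the previous Theorem,'' and the bibliography entry [ES17+] is the preprint \emph{Rigidity of Ricci-bounded cones}. There is therefore no proof here against which to compare your argument; what this paper does prove is only the special case of the flat cone over a circle (Lemma~\ref{cone}), whose explicit asymptotic $W\big(\hat P_t\delta_o,\hat P_t\delta_y\big)=R-\sqrt{\pi t}\,\frac{2}{\alpha}\sin\frac{\alpha}{2}+O(t)$ your formula $c(y)=\beta_N\, m'(X')^{-1}\!\int_{X'}\cos\big(d'(\cdot,\xi_y)\wedge\pi\big)\,dm'$ does reproduce for $N=1$, which is a good consistency check.

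Taking the proposal on its own terms, it has the right skeleton but two substantive gaps. First, the Kantorovich-dual step only delivers a \emph{lower} bound $W\ge R-\beta_N\bar c(\xi_y)\sqrt t+O(t)$; to conclude $\RIC^{\pm}(o,y)=\infty$ you need an \emph{upper} bound $W\le R-c\sqrt t$ with $c>0$. You assert that ``exactly the coupling used to prove Lemma~\ref{cone}'' supplies it, but that coupling rests on $\hat P_t\delta_y$ being asymptotically a Euclidean Gaussian blob at a manifold point $y$. In a general RCD cone the set of such regular $y$ is only a full-measure subset; at non-regular $y$ the leading behaviour of $\hat P_t\delta_y$ is governed by the tangent cone at $y$ and the coupling has to be reconstructed. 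You acknowledge a defect there, but you would also need to show this defect cannot cancel the gain from the tip, and that the transition is uniform enough to get both $\liminf$ and $\limsup$ versions of $\RIC$. Second, and more seriously, the rigidity step is underspecified. From $c(y)=0$ you extract only the moment condition $\int_{X'}\cos\big(d'(\cdot,\xi_y)\wedge\pi\big)\,dm'=0$ for all $\xi_y$; this says the cosine-distance functions are mean-zero, not that they are $\lambda_1$-eigenfunctions, and no diameter or spectral gap information falls out directly. Ketterer's characterisation does give you that $X'$ is RCD$(N-1,N)$ with $\mathrm{diam}\le\pi$, but the passage from the vanishing of all $\bar c(\xi_y)$ to the Lichnerowicz--Obata or maximal-diameter rigidity hypotheses is exactly the missing link, and the proposal does not close it.
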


\subsection{Synthetic Lower Ricci Bounds}

\begin{theorem} For any infinitesimally Hilbertian mm-space $(X,d,m)$ and any $p\in [1,\infty)$ the following are equivalent
\begin{enumerate}
\item $(X,d,m)$ satisfies RCD$(K,\infty)$
\item  for all $x,y\in X$ and all $t>0$
$$W(\hat P_t\delta_x, \hat P_t\delta_y)\le e^{-Kt} d(x,y)$$
\item  for all $\mu,\nu\in\Pz(X)$ and all $t>0$
$$W(\hat P_t\mu, \hat P_t\nu)\le e^{-Kt} W(\mu,\nu)$$
\item  for all $\mu,\nu\in\Pz(X)$,  all $t>0$ and all $q\in[1,\infty]$
$$W_q(\hat P_t\mu, \hat P_t\nu)\le e^{-Kt} W_q(\mu,\nu)$$
\item  for all curves $(\mu^a)_{a\in [0,1]}$ in $\Pz(X)$, all $t>0$ and all $q\in[1,\infty]$
$$\action_{p,q}(\hat P_t\mu^\cdot)\le e^{-Kpt} \action_{p,q}(\mu^\cdot)$$
\item $t\mapsto\length(\hat P_t\mu^\cdot)$ is absolutely upper continuous 
for all geodesics $(\mu^a)_{a\in [0,1]}$ in $\Pz(X)$ with
$$-\partial^+_t|_{t=0}\log\length(\hat P_t\mu^\cdot)\ge K$$ 
\item $t\mapsto W(\hat P_t\mu, \hat P_t\nu)$ is absolutely upper continuous 
for all $\mu,\nu\in\Pz(X)$
with $$-\partial^+_t|_{t=0}\log W(\hat P_t\mu, \hat P_t\nu)\ge K.$$
\end{enumerate}
\end{theorem}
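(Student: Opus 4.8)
The plan is as follows. The equivalence $(1)\Leftrightarrow(3)$ is exactly the contraction characterization of CD$(K,\infty)$ recalled in the introduction \cite{AGS15AnnProb}, so I would take it for granted and then close the two chains of implications
\[
(1)\ \Rightarrow\ (4)\ \Rightarrow\ (5)\ \Rightarrow\ (3)\qquad\text{and}\qquad (3)\ \Rightarrow\ (6)\ \Rightarrow\ (7)\ \Rightarrow\ (2)\ \Rightarrow\ (3),
\]
which together with $(1)\Leftrightarrow(3)$ yield all seven equivalences. The recurring elementary tools are the semigroup law $\hat P_{t+s}=\hat P_t\circ\hat P_s$; the inequality $\action_{p,q}(\omega^\cdot)\ge\length_{W_q}(\omega^\cdot)^p$ between $p$-action and length (both with respect to $W_q$), with equality along constant-speed curves; the $W$-continuity in $t$ of the dual heat flow; and the comparison principle that a continuous $g\ge0$ with $\limsup_{h\downarrow0}h^{-1}\big(g(t+h)-g(t)\big)\le -Kg(t)$ for all $t\ge0$ satisfies $g(t)\le e^{-Kt}g(0)$.

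The one substantial step is $(1)\Rightarrow(4)$: that RCD$(K,\infty)$ implies the $W_q$-contraction $W_q(\hat P_t\mu,\hat P_t\nu)\le e^{-Kt}W_q(\mu,\nu)$ for \emph{every} $q\in[1,\infty]$. I would derive it from the self-improvement of the Bakry--\'Emery condition (Savar\'e), which upgrades the $L^2$-estimate to the pointwise $L^1$-gradient estimate $|DP_tu|_*\le e^{-Kt}\,P_t|Du|_*$; since $P_t$ is a Markov kernel, Jensen's inequality turns this into $|DP_tu|_*\le e^{-Kt}\big(P_t|Du|_*^{q'}\big)^{1/q'}$ for all $q'\in[1,\infty]$, and Kuwada's gradient--transport duality (between $L^{q'}$-gradient estimates and $W_q$-contractions, $\tfrac1{q'}+\tfrac1q=1$) converts these into $(4)$.

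The remaining arrows are bookkeeping. $(4)\Rightarrow(5)$: apply the $W_q$-contraction to each increment of a partition of $\mu^\cdot$, raise to the power $p$, divide by $|a_i-a_{i-1}|^{p-1}$, sum, and take the supremum, picking up the factor $e^{-Kpt}$. $(5)\Rightarrow(3)$: run $(5)$ with $q=2$ along a constant-speed $W$-geodesic $(\mu^a)$ from $\mu$ to $\nu$, so that $\action_{p,2}(\mu^\cdot)=W(\mu,\nu)^p$ whereas $\action_{p,2}(\hat P_t\mu^\cdot)\ge\length(\hat P_t\mu^\cdot)^p\ge W(\hat P_t\mu,\hat P_t\nu)^p$; taking $p$-th roots gives $(3)$. $(3)\Rightarrow(6)$: applying $(3)$ to the increments of an arbitrary curve shows $\length(\hat P_t\omega^\cdot)\le e^{-Kt}\length(\omega^\cdot)$ for every curve; applying this in turn to $\hat P_s\mu^\cdot$ and using the semigroup law makes $t\mapsto e^{Kt}\length(\hat P_t\mu^\cdot)$ non-increasing, hence $t\mapsto\length(\hat P_t\mu^\cdot)$ absolutely upper continuous, and the bound $-\partial_t^+|_{t=0}\log\length(\hat P_t\mu^\cdot)\ge K$ is immediate. $(6)\Rightarrow(7)$: for a $W$-geodesic $(\mu^a)$ from $\mu$ to $\nu$ one has $W(\hat P_t\mu,\hat P_t\nu)\le\length(\hat P_t\mu^\cdot)$ with equality at $t=0$, which passes the derivative bound to $W$; reapplying $(6)$ to geodesics joining $\hat P_s\mu$ to $\hat P_s\nu$ yields $\limsup_{h\downarrow0}h^{-1}\big(W(\hat P_{s+h}\mu,\hat P_{s+h}\nu)-W(\hat P_s\mu,\hat P_s\nu)\big)\le -KW(\hat P_s\mu,\hat P_s\nu)$ for all $s$, and the comparison principle (with $W$-continuity) makes $t\mapsto e^{Kt}W(\hat P_t\mu,\hat P_t\nu)$ non-increasing, hence absolutely upper continuous. $(7)\Rightarrow(2)$: take $\mu=\delta_x$, $\nu=\delta_y$ and run the same reapplication-and-comparison argument to get $W(\hat P_t\delta_x,\hat P_t\delta_y)\le e^{-Kt}d(x,y)$. $(2)\Rightarrow(3)$: given $\mu,\nu$ and an optimal plan $\pi\in\Cpl(\mu,\nu)$, select measurably an optimal coupling $\sigma_{x,y}\in\Cpl(\hat P_t\delta_x,\hat P_t\delta_y)$ for $\pi$-a.e.\ $(x,y)$; then $\Sigma:=\int\sigma_{x,y}\,d\pi(x,y)$ couples $\hat P_t\mu$ and $\hat P_t\nu$ (using $\hat P_t\mu=\int\hat P_t\delta_x\,\mu(dx)$), whence
\[
W^2(\hat P_t\mu,\hat P_t\nu)\le\int d^2\,d\Sigma=\int W^2(\hat P_t\delta_x,\hat P_t\delta_y)\,d\pi\le e^{-2Kt}\int d^2\,d\pi=e^{-2Kt}W^2(\mu,\nu).
\]

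I expect $(1)\Rightarrow(4)$ to be the only genuinely delicate point, resting as it does on the self-improvement theorem for RCD$(K,\infty)$ and on Kuwada's duality rather than on bare manipulation; all the other implications are routine. The minor technicalities worth flagging are the measurable selection of optimal couplings in $(2)\Rightarrow(3)$, the careful treatment of one-sided upper derivatives together with the comparison principle in $(6)\Rightarrow(7)$ and $(7)\Rightarrow(2)$, and the existence of a constant-speed $W$-geodesic joining two given measures (used in $(5)\Rightarrow(3)$ and $(6)\Rightarrow(7)$, and available whenever $(X,d)$---hence $(\Pz(X),W)$---is a geodesic space).
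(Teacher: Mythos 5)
Your proposal is correct and rests on the same core ingredients as the paper: the Ambrosio--Gigli--Savar\'e contraction characterization as the anchor, Savar\'e's self-improvement (with Kuwada duality, which the paper subsumes into the citation of Savar\'e) for the $W_q$-contraction, the partition argument to pass from $W_q$-contraction to $p$-action contraction, the coupling argument for $(2)\Rightarrow(3)$, and the semigroup law plus integration/comparison to close the loop from $(7)$ back to $(2)$. The difference is mostly organizational: the paper anchors at $(1)\Leftrightarrow(2)$ and runs the single chain $(2)\Rightarrow(3)\Rightarrow(4)\Rightarrow(5)\Rightarrow(6)\Rightarrow(7)\Rightarrow(2)$, treating $(4)\Rightarrow(3)\Rightarrow(2)$ and $(5)\Rightarrow(6)\Rightarrow(7)$ as obvious, whereas you anchor at $(1)\Leftrightarrow(3)$, prove $(1)\Rightarrow(4)$ directly, and add the direct shortcuts $(5)\Rightarrow(3)$ (via the constant-speed $W$-geodesic so that $\action_{p,2}$ collapses to $W^p$) and $(3)\Rightarrow(6)$ (via increments and the semigroup law). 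Both shortcuts are correct and make your graph of implications a little more redundant than the paper's, but logically tighter at no extra cost. One point to keep in mind, which affects both your write-up and the paper's: the implication $(7)\Rightarrow(2)$ (your comparison-principle step, the paper's integration step) uses that $s\mapsto e^{Ks}W(\hat P_s\mu,\hat P_s\nu)$ satisfies $g(t)-g(0)\le\int_0^t\partial_s g\,ds$; a pointwise upper Dini bound plus continuity, or even monotonicity, does not by itself give this integral inequality (think of a Cantor-type monotone function), and the needed absolute upper continuity is exactly what statements $(6)$/$(7)$ carry as part of their hypothesis. When you derive $(6)$ and $(7)$ from $(3)$ you should make sure the a.c.\ is genuinely established (e.g.\ from a priori Wasserstein-Lipschitz regularity of the dual heat flow) rather than deduced from monotonicity alone.
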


Here $W_q$ denotes the $q$-Kantorovich-Wasserstein distance and $\action_{p,q}$ denotes the $p$-action w.r.t.\ $W_q$, that is,
$$\action_{p,q}(\mu^\cdot)=\sup_{(a_i)}\sum_i \frac1{|a_i-a_{i-1}|^{p-1}}W_q\big(\mu^{a^i}, \mu^{a^{i-1}}\big)^p$$
where the supremum is taken over all finite partitions $0=a_0<a_1<\ldots<a_n=1$.
\begin{proof}
Various of the implications are obvious: (4) $\Rightarrow$(3)$\Rightarrow$ (2) and
(5) $\Rightarrow$ (6)$\Rightarrow$(7).

(1) $\Leftrightarrow$  (2): This is the basic equivalence as deduced by Ambrosio, Gigli, Savar\'e \cite{AGS15AnnProb}.

(2) $\Rightarrow$ (3): Given $\mu,\nu$, choose a $W$-optimal coupling  $q$ of them. Then the measure $$q_t(dx',dy'):=\int\hat P_t\delta_x(dx')\hat P_t\delta_y(dy') q(dx,dy)$$ is a coupling of $\hat P_t\mu(dx')=\int \hat P_t\delta_x(dx')\mu(dx)$ and $\hat P_t\nu(dy')=\int \hat P_t\delta_y(dy')\nu(dy)$. Thus 
\begin{eqnarray*}W^2(\hat P_t\mu, \hat P_t\nu)&\le&\int d^2(x',y') \int\hat P_t\delta_x(dx')\hat P_t\delta_y(dy') q(dx,dy)\\
&\le& e^{-2Kt}\int d^2(x,y)q(dx,dy)=e^{-2Kt} W^2(\mu,\nu)
\end{eqnarray*}
where we made use of (2) for the last estimate.

(3) $\Rightarrow$ (4): This is the self-improvement property as deduced by Savar\'e \cite{Sav}.

(4) $\Rightarrow$ (5): Given the curve $(\mu^a)_{a\in [0,1]}$, numbers $t,p,q$ as above as well as any finite partition $0\le a_0<a_1,\ldots , a_n=1$ of the parameter interval $[0,1]$
\begin{eqnarray*}\sum_i \frac1{|a_i-a_{i-1}|^{p-1}}W_q\big(\hat P_t\mu^{a^i},\hat P_t\mu^{a^{i-1}}\big)^p&\le& e^{-Kpt}
\sum_i \frac1{|a_i-a_{i-1}|^{p-1}}W_q\big(\mu^{a^i},\hat \mu^{a^{i-1}}\big)^p\\
&\le& e^{-Kpt} \action_{p,q}(\mu^\cdot).\end{eqnarray*}
Passing to the supremum over all partitions, yields the claim.

(7) $\Rightarrow$ (2):  Integrating (7) from 0 to $t$, we to obtain (2):
$$e^{Kt}W(\hat P_t\mu, \hat P_t\nu)-W(\mu, \nu)\le\int_0^t
\partial_s\Big(e^{Ks}W(\hat P_s\mu, \hat P_s\nu)\Big)ds\le0.$$
\end{proof}

\begin{proposition} Assume that a priori $(X,d,m)$ is known to satisfy some RCD$(K',\infty)$-condition. Then in addition the following assertions are equivalent to the previous assertions (1) - (7):
\begin{enumerate}
\item[(8)] 
 $-\partial^+_t|_{t=0}\log\length(\hat P\delta_{\gamma^\cdot})\ge K$ for all geodesics $(\gamma^a)_{a\in [0,1]}$ in $X$
 \item[(9)] 
  $-\partial^+_t|_{t=0}\log W(\hat P_t\delta_x, \hat P_t\delta_y)\ge K$ for all $x,y\in X$.
\end{enumerate}
\end{proposition}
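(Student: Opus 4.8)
The plan is to show that, under the a priori assumption RCD$(K',\infty)$, the two new conditions (8) and (9) fit into the cycle of equivalences of the previous theorem by proving (2)$\Rightarrow$(8), (8)$\Rightarrow$(9) (which is trivial, taking the geodesic between $x$ and $y$), and finally (9)$\Rightarrow$(2). The only genuinely new content is the passage from the asymptotic, one-sided, infinitesimal inequality (9) back to the global contraction estimate (2); everything else is either immediate or already contained in the proof of the previous theorem.

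First I would record that (2)$\Rightarrow$(9) is immediate: from $W(\hat P_t\delta_x,\hat P_t\delta_y)\le e^{-Kt}d(x,y)$ we get $\log W(\hat P_t\delta_x,\hat P_t\delta_y)-\log d(x,y)\le -Kt$, hence dividing by $t>0$ and letting $t\downarrow0$ yields $\partial_t^+|_{t=0}\log W(\hat P_t\delta_x,\hat P_t\delta_y)\le -K$, which is (9); similarly (2)$\Rightarrow$(8) by taking a constant-speed geodesic joining $x$ to $y$ and using that the length of $\hat P_t\delta_{\gamma^\cdot}$ is controlled by the endpoint contraction applied to all consecutive pairs of a partition, exactly as in the computation already displayed for the "weak bound" Theorem. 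And (8)$\Rightarrow$(9) holds because for given $x,y$ the length of $\hat P_t\delta_{\gamma^\cdot}$ along the minimal geodesic $\gamma$ dominates $W(\hat P_t\delta_x,\hat P_t\delta_y)$ while at $t=0$ it equals $d(x,y)$, so the logarithmic derivative bound transfers.

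The main step is (9)$\Rightarrow$(2). Here the a priori bound RCD$(K',\infty)$ is used to guarantee that $t\mapsto W(\hat P_t\delta_x,\hat P_t\delta_y)$ is locally Lipschitz (indeed, by the already-established equivalence applied with $K'$, it is $\le e^{-K't}d(x,y)$ and one has comparable lower control), so that $h(t):=\log W(\hat P_t\delta_x,\hat P_t\delta_y)$ is absolutely continuous on $(0,\infty)$ with $h(0^+)=\log d(x,y)$. I would then want to upgrade the pointwise bound $\partial_t^+|_{t=0}h(t)\le -K$ at the single time $t=0$ to the bound $\partial_t^+ h(t)\le -K$ at every time $t>0$. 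This follows from the semigroup property: for fixed $s>0$, apply (9) not to $\delta_x,\delta_y$ but to the measures $\mu:=\hat P_s\delta_x$, $\nu:=\hat P_s\delta_y$ — wait, (9) is stated only for Dirac masses, so instead I would use the stability of the estimate under the representation $\hat P_s\mu=\int\hat P_s\delta_x\,\mu(dx)$ together with the gluing/coupling argument from the proof of (2)$\Rightarrow$(3) in the previous theorem: knowing the infinitesimal inequality for all pairs of Diracs and knowing a priori regularity, one obtains $\partial_t^+|_{t=s}\log W(\hat P_t\delta_x,\hat P_t\delta_y)\le -K$ for all $s$ and all $x,y$. Granting this, Gronwall (or the integration trick used for (7)$\Rightarrow$(2)) gives $e^{Kt}W(\hat P_t\delta_x,\hat P_t\delta_y)\le d(x,y)$, which is (2).

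I expect the delicate point to be exactly the shift of the infinitesimal inequality from time $0$ to an arbitrary time $s>0$ while staying within the class of Dirac initial data. The clean route is: from (9) for all Diracs plus the coupling argument one first deduces the integrated statement $W(\hat P_{t}\mu,\hat P_{t}\nu)\le$ something for general $\mu,\nu$ at small times, but to make the Gronwall argument rigorous one really needs the a priori RCD$(K',\infty)$ regularity to know that $t\mapsto W(\hat P_t\delta_x,\hat P_t\delta_y)$ is absolutely continuous and that its upper derivative at $t=0$ controls its upper derivative everywhere via the semigroup property $\hat P_{s+t}=\hat P_t\circ\hat P_s$ combined with the contraction-type coupling estimate. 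Once absolute continuity and the everywhere-bound $\partial_t^+h\le -K$ are in hand, the conclusion is the same one-line integration as in (7)$\Rightarrow$(2).
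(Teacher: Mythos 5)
Your proposal matches the paper's route: the forward implications are immediate (the paper uses (6)$\Rightarrow$(8)$\Rightarrow$(9), you use (2)$\Rightarrow$(8)$\Rightarrow$(9), which amounts to the same thing since (2)--(7) are already equivalent), and the substantive step is exactly (9)$\Rightarrow$(7), from which (7)$\Rightarrow$(2) of the previous theorem closes the cycle via the semigroup shift you describe. The one piece you gesture at but do not make explicit is the precise mechanism for passing the infinitesimal bound from Dirac pairs to arbitrary $\mu,\nu$: the paper disintegrates along a $W$-optimal coupling $q$ (so that $W^2(\hat P_t\mu,\hat P_t\nu)\le\int W^2(\hat P_t\delta_x,\hat P_t\delta_y)\,dq$) and then applies the reverse Fatou lemma, using the a priori RCD$(K',\infty)$ bound to produce the $q$-integrable majorant $e^{2(K-K')t}d^2(x,y)$; this is what makes ``the stability of the estimate under the representation $\hat P_s\mu=\int\hat P_s\delta_x\,\mu(dx)$'' rigorous, and it is the only point at which the a priori lower bound is genuinely used beyond supplying absolute continuity.
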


\begin{proof}
The implications 
(6) $\Rightarrow$(8)$\Rightarrow$ (9) are obvious. 

(9) $\Rightarrow$(7): Given $\mu,\nu$, choose a $W$-optimal coupling  $q$ of them. Without restriction, we may assume $\int d^2(x,y)q(dx,dy)<\infty$.
Then by Fatou's lemma (since the following integrand  is bounded from above by $e^{2(K-K')t}d^2(x,y)$ which is integrable w.r.t.\ $q$)
\begin{eqnarray*}
\partial^+_t|_{t=0}\big[e^{2Kt}W^2(\hat P_t\mu, \hat P_t\nu)\big]&\le&
\limsup_{t\to0}\int \frac1t \Big[\big[e^{2Kt}W^2(\hat P_t\delta_x, \hat P_t\delta_y)\big]-d^2(x,y)\Big]dq(dx,dy)\\
&\le&\int \limsup_{t\to0}\frac1t \Big[\big[e^{2Kt}W^2(\hat P_t\delta_x, \hat P_t\delta_y)\big]-d^2(x,y)\Big]dq(dx,dy)
\le0.
\end{eqnarray*}
The absolute continuity of $t\mapsto W(\hat P_t\mu, \hat P_t\nu)$ follows from the RCD$(K',\infty)$-assumption.
\end{proof}

\section{Ricci Bounds for Weighted Riemannian Manifolds}

In this section, we will discuss the previously introduced synthetic concepts for Ricci bounds in the more restrictive setting of weighted Riemannian manifolds.
A \emph{weighted Riemannian manifold} is a triple $(M,g,f)$ where $M$ is a (finite dimensional, smooth) manifold equipped with a Riemannian metric tensor $g$ as well as with a smooth `weight' function $f:M\to\R$. It canonically defines a metric measure space $(X,d,m)$ with $X=M$ where $d=d_g$ is the Riemannian distance induced by $g$ and $m=e^{-f}\mbox{vol}_g$ is the Borel measure on $M$ with density $e^{-f}$ w.r.t.\ the Riemannian volume measure $\mbox{vol}_g$ induced by $g$. The induced mm-space is always infinitesimally Hilbertian.
The mm-space induced by $(M,g,f)$ satisfies a curvature-dimension condition RCD$(K,\infty)$ if and only if $\Ric_f\ge K\cdot g$ (in the sense of inequalities between quadratic forms on the tangent bundle of $M$) where 
 $$\Ric_f=\Ric+\Hess f$$ denotes the `canonical'  Ricci tensor on the weighted Riemannian manifold.
The `canonical' Laplacian will be  $\Delta_f=\Delta-\nabla f\cdot\nabla$.
For each $\mathcal C^1$-curve $(\gamma^a)_{a\in [b,c]}$ in $M$ we denote the 
the mean value of  
the normalized Ricci curvature along the curve
by
\begin{equation}\label{Kxy}\Ric_f(\gamma):=
\frac1{c-b} \int_{b}^c \Ric_f(\dot\gamma^a, \dot\gamma^a)/ |\dot\gamma^a|^{2}\,da. \end{equation} 

Throughout the sequel, we will assume that the mm-space induced by $(M,g,f)$ satisfies a curvature-dimension condition RCD$(K',N)$ for some pair of finite numbers $K',N$.

\subsection{Precise Estimates for Heat Flows}

\begin{theorem}\label{thm-sharp}
For all pairs of non-conjugate points $x,y\in M$
\begin{equation}\label{sharp}
\Ric_f(\gamma)\le \RIC^\flat(x,y)\le  \RIC^+(x,y)\le
\Ric_f(\gamma)+\sigma(\gamma)\cdot\tan^2\Big( \sqrt{\sigma(\gamma)} d(x,y)/2\Big)\end{equation}
where $\gamma=(\gamma^a)_{a\in[-r,r]}$ denotes the (unique) geodesic connecting $x$ and $y$, $\Ric_f(\gamma)$ as introduced in \eqref{Kxy} denotes 
 the average Ricci curvature on the way  from $x$ to $y$
and with
$\sigma(\gamma)$ denotes the maximal modulus of the Riemannian curvature along this geodesic.
\end{theorem}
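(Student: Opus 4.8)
The plan is to reduce the statement to a careful Taylor expansion of $W\big(\hat P_t\delta_x,\hat P_t\delta_y\big)$ in $t$ around $t=0$, where the heat flow out of a Dirac mass is well approximated on a Riemannian manifold by a Gaussian in normal coordinates. First I would fix the unique minimizing geodesic $\gamma=(\gamma^a)_{a\in[-r,r]}$ from $x$ to $y$ (possible since $x,y$ are non-conjugate and we may shrink to the minimizing segment) and introduce Fermi/Jacobi-type coordinates along $\gamma$. The heat kernel asymptotics give $\hat P_t\delta_x \approx$ a Gaussian of variance $\sim 2t$ centered at $x$, with the first correction governed by $\Delta_f$ acting on the distance function and by the weight $f$; the same at $y$. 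The key analytic input is that for small $t$ one can build a near-optimal coupling of $\hat P_t\delta_x$ and $\hat P_t\delta_y$ by transporting mass along the (perturbed) geodesics connecting corresponding points, and the quadratic cost of this coupling expands as $d(x,y)^2$ plus a term linear in $t$ whose coefficient is exactly $-2\,d(x,y)^2\,\Ric_f(\gamma)$ up to a correction controlled by how much the Jacobi fields along $\gamma$ fail to be affine — this is where $\sigma(\gamma)$ and the $\tan^2$ factor enter, via the comparison estimates for Jacobi fields on an interval of length $d(x,y)$ under a two-sided curvature bound $|{\rm sec}|\le\sigma(\gamma)$.

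The lower bound $\Ric_f(\gamma)\le \RIC^\flat(x,y)$ (which by the chain of obvious inequalities $\RIC^\flat\le\RIC^-\le\RIC^+$ recorded in the excerpt also needs the matching upper estimate on $\RIC^+$) I would obtain by producing, for \emph{any} $\mu$ supported in $B_\epsilon(x)$ and $\nu$ supported in $B_\epsilon(y)$, a coupling of $\hat P_t\mu$ and $\hat P_t\nu$ whose cost grows at least like $1-2t\,\Ric_f(\gamma)+o(t,\epsilon)$ relative to $W^2(\mu,\nu)$; equivalently, a lower Bakry–Émery-type bound on the derivative of $W^2$ under the flow, using the Kuwada-type duality between gradient estimates for $P_t$ and Wasserstein contraction/expansion rates, now run with the Ricci tensor evaluated pointwise along $\gamma$ rather than with a global bound. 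For the upper bound on $\RIC^+(x,y)$ I would use the explicit Gaussian coupling described above and estimate the error: the deviation of $\hat P_t\delta_x$ from the exact Gaussian is $O(t)$ in a sense strong enough for Wasserstein estimates, and the deviation of the transport cost from $d(x,y)^2(1-2t\,\Ric_f(\gamma))$ is bounded by $2t\,d(x,y)^2\cdot\sigma(\gamma)\tan^2(\sqrt{\sigma(\gamma)}\,d(x,y)/2)$ by integrating the Jacobi field comparison; this is the step to do with care since the constant must come out sharp, matching the cone/sphere model computations (cf.\ Lemma~\ref{cone}).

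Concretely the steps, in order, are: (1) set up normal coordinates along $\gamma$ and record the short-time expansion $\hat P_t\delta_z(dw)=(4\pi t)^{-n/2}e^{-d^2(z,w)/4t}\big(1+t\,h_z(w)+\dots\big)e^{-f(w)}{\rm vol}_g(dw)$ with $h_z$ expressible via $\Delta_f$ and scalar/Ricci terms, valid for $z=x,y$ and $w$ near $\gamma$; (2) reduce the Wasserstein cost to a one-dimensional problem along the geodesic direction, showing transverse fluctuations contribute only $O(t)$ after optimization — the relevant quantity is the average over the two Gaussians of $d^2$ between paired points; (3) compute the paired distance using the second variation of arc length / the index form, getting $d(x,y)^2 - 2t\int_{-r}^{r}\Ric_f(\dot\gamma,\dot\gamma)\,da + (\text{error})$; (4) bound the error above by the curvature term using Jacobi comparison on $[-r,r]$, and below by zero, yielding both inequalities in \eqref{sharp}; (5) for $\RIC^\flat$, upgrade the $\delta_x,\delta_y$ computation to general $\mu,\nu$ near $x,y$ via a coupling/convexity argument and let $\epsilon\to0$. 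The main obstacle I expect is step (2)–(4): controlling the transverse directions and extracting precisely the constant $\sigma(\gamma)\tan^2(\sqrt{\sigma(\gamma)}\,d(x,y)/2)$ rather than some cruder bound — this requires a genuinely sharp Jacobi field estimate (the model being constant curvature $\pm\sigma$), and making sure the $O(t)$ remainders from the heat kernel expansion and from the sub-optimality of the explicit coupling do not contaminate the coefficient of $t$.
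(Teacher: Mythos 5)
Your plan is a genuinely different route from the paper's, and it has a directionality gap that is not just a technicality. The bound $\RIC^+(x,y)\le \Ric_f(\gamma)+\sigma\tan^2(\cdots)$ requires a \emph{lower} bound on $W(\hat P_t\delta_x,\hat P_t\delta_y)$, since $\RIC^+(x,y)=-\liminf_{t\to0}\frac1t\log(W(\hat P_t\delta_x,\hat P_t\delta_y)/d(x,y))$. Exhibiting an explicit (Gaussian, geodesic-following) coupling and computing its cost gives an \emph{upper} bound on $W$, hence a lower bound on $\RIC^+$, which is the wrong inequality. To make your coupling computation deliver the upper bound on $\RIC^+$ you would have to prove that your ansatz coupling is optimal up to $o(t)$, and nothing in the proposal addresses that; it is exactly as hard as the original problem. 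The paper avoids this entirely by going through the Kantorovich \emph{dual}: it picks a family $\phi^a$ solving the Hamilton--Jacobi flow $\partial_a\phi^a=-\frac12|\nabla\phi^a|^2$ with $\nabla\phi^a(\gamma^a)=\dot\gamma^a$, notes that $(-2r\phi^{-r},2r\phi^r)$ is an admissible Kantorovich pair for $(\delta_x,\delta_y)$, and thereby gets a direct lower bound for $\partial_s W^2(\hat P_s\delta_x,\hat P_s\delta_y)|_{s=0}$ in terms of $\Delta_f\phi^{\pm r}$. Differentiating $\Delta_f\phi^a(\gamma^a)$ in $a$ and using the Bochner-type identity produces $\Ric_f(\nabla\phi^a,\nabla\phi^a)+\|\D^2\phi^a\|_{2,2}^2$, and the $\sigma\tan^2(\sqrt\sigma\, d/2)$ term then comes from normalizing $\D^2\phi^0(\gamma^0)=0$ and running the Riccati inequality $\partial_a h\le \sigma+h^2$ for the Hessian norm $h(a)=\|\D^2\phi^a\|_{2,2}(\gamma^a)$ along the flow. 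This is both sharper and far lighter on machinery than a parametrix expansion: no heat kernel asymptotics are needed at all, only $\partial_t\int\phi\,d\hat P_t\delta_z|_{t=0}=\Delta_f\phi(z)$.

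Your plan for the lower bound $\Ric_f(\gamma)\le \RIC^\flat(x,y)$ is at least pointed in the right direction (a coupling/contraction estimate does give a lower bound on $\RIC^\flat$), but ``Kuwada duality run with the Ricci tensor evaluated pointwise along $\gamma$'' is a substantial localization that you do not carry out, and it is not how the paper proceeds. The paper instead establishes the identity $\RIC^\flat(x,y)=\ric(x,y)$ (Theorem~\ref{thm-prec}, a separate and nontrivial RCD argument using the EVI and a decomposition of the optimal dynamical plan into geodesics that stay near $\gamma$ versus the rest), and then bounds $\ric(x,y)$ from below by $\Ric_f(\gamma)$ via the standard second-variation formula for the entropy along Wasserstein geodesics. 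Beyond the directionality issue, your step~(2) (``transverse fluctuations contribute only $O(t)$ after optimization'') is exactly where the parametrix route typically loses the sharp constant; as you yourself anticipate, this is the genuinely hard part of your approach, and the dual/Hopf--Lax route sidesteps it completely.
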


\begin{remark} Let $X$ be the flat torus. Then direct calculations (similar to those in Example \ref{cone}) yield that
$\RIC^\pm(x,y)=\infty$  for antipodal points $x,y$.
\end{remark}

\begin{proof}
{\bf (i) Upper estimate.} 
Given non-conjugate points $x,y\in M$, let $(\gamma^a)_{a\in[-r,r]}$  be the unit speed geodesic connecting them. 
Choose a smooth family of functions 
$\phi^a, a\in[-r,r],$ which satisfy the Hopf-Lax relation $\partial_a\phi^a=-\frac12|\nabla\phi^a|^2$ with 
$\nabla\phi^a(\gamma^a)=\dot\gamma^a$ for (some, hence all) $a\in[-r,r]$.

Then $(-2r\phi^{-r},2r\phi^r)$ is a pair of Kantorovich potentials for $\delta_x$ and $\delta_y$ and thus
\begin{eqnarray}
\frac12\partial^-_s W^2(\hat P_{s}\delta_{x}, \hat P_{s}\delta_{y})\big|_{s=0}
\nonumber
&\ge&\nonumber
\liminf_{s\searrow0}
\frac{2r}s\Big[\int  -\phi^{-r}d\hat P_{s}\delta_{x}+
\int  \phi^rd\hat P_{s}\delta_{y}-\int-\phi^{-r} d\delta_x-\int\phi^r d\delta_x\Big]\\
&=&\nonumber 2r\Big[-\Delta_f\phi^{-r}(x)+\Delta_f\phi^r(y)\big]\\
&=&\nonumber 2r\int_{-r}^r \partial_a\Big(\Delta_f\phi^a(\gamma^a)\Big)\,da\\
&=&\nonumber 2r\int_{-r}^r\Big(
-\frac12\Delta_f\big|\nabla \phi^a\big|^2+\nabla\Delta_f\phi^a\cdot\dot\gamma^a
\Big)\,da\\
&=&\nonumber-2r\int_{-r}^r\Big(
\Ric_f(\nabla\phi^a,\nabla\phi^a)+\big\|\D^2\phi^a\big\|_{2,2}^2\Big)(\gamma^a)\,da\\
&\ge&-\big(\Ric(x,y)+\sup_{a\in[-r,r]}h^2(a)\big)\cdot d^2(x,y)
\label{upper}
\end{eqnarray}
with
\begin{equation*}\label{small Hessian}
h(a):=\big\|\D^2\phi^a\big\|_{2,2}(\gamma^a)
\end{equation*} where $\D^2\phi$ denotes  the Hessian of $\phi$ and $\big\|\D^2\phi^a\big\|_{2,2}$ its Hilbert-Schmidt norm.

To estimate the latter, let us choose $\phi^0$  such that $\nabla\phi^0(\gamma^0)=\dot\gamma^0$
and $\D^2\phi^0(\gamma^0)=0$. This is indeed always possible. 
Now fix an orthonormal frame $(x_i)_{i=1,\ldots,n}$ along $\gamma$ and consider the evolution   of the Hessian $( \D_{ij}\phi^a)_{i,j=1,\ldots,n}$  under the Hopf-Lax flow:
\begin{eqnarray}\nonumber
 \partial_a\Big(\D_{ij}\phi^a(\gamma^a)\Big)
&=&
\Big[-\frac12\D_{ij}\big|\nabla \phi^a\big|^2+\nabla\D_{ij}\phi^a\cdot\dot\gamma^a\Big](\gamma^a)
\\
&=&\label{curv}
-\Big[
R(e_i,\nabla\phi^a,e_j,\nabla\phi^a)+
\sum_k\D_{ik}\phi^a\cdot \D_{jk}\phi^a\Big](\gamma^a).
\end{eqnarray}
Here 
$R(v,u,w,u)=g(\mathrm{Rm}(v,u)w, u)$ for $u,v,w\in TM$ with  $\mathrm{Rm}$ being the Riemannian curvature tensor. 

Multiplying this identity by $\D_{ij}\phi^a(\gamma^a)$, summing over $i,j$ and using the fact that the curvature is bounded by $\sigma$ yields for $h(a):=\Big(\sum_{i,j}\big(\D_{ij}\phi^a\big)^2\Big)^{1/2}(\gamma^a)$
\begin{eqnarray*}
\frac12\partial_a h^2(a)&=&\sum_{i,j}\D_{ij}\phi^a(\gamma^a)\cdot
 \partial_a\Big(\D_{ij}\phi^a(\gamma^a)\Big)\\
 &\le&\sigma\cdot h(a)+h^3(a)
\end{eqnarray*}
(cf. \cite{GaHuLa}, p. 213)
where we used the fact  that
$$\Big|\sum_{i,j,j}A_{ij} A_{jk} A_{ki}\Big| \le\Big(\sum_{i,j}A_{ij}^2\Big)^{3/2}$$
for each symmetric matrix. Dividing the previous differential inequality by $h$ yields the Riccati type  differential inequality
$$\partial_a h(a)\le \sigma+ h^2(a).$$
By assumption $h(0)=0$. Using the explicit solution for the corresponding differential \emph{equality} and Sturm's comparison principle we obtain
\begin{equation}
h(a)\le \sqrt\sigma\cdot \tan\big(\sqrt\sigma\cdot |a|\big)
\end{equation}
for all $a\in\R$ which -- together with \eqref{upper} -- proves the claim.

{\bf (ii) Lower estimate.} 
Theorem \ref{thm-prec} in the more general context of RCD-spaces will
yield the equivalence 
$\RIC^\flat=\ric$
where
\begin{eqnarray*}
\ric(x,y)&:=&\lim_{\epsilon\to0} \inf\Big\{
\frac1{W^2(\rho^0,\rho^1)}\cdot\Big[ \partial^-_a S(\rho^a)\big|_{a=1}-\partial_a^+ S(\rho^a)\big|_{a=0}
\Big]: \ \big(\rho^a\big)_{a\in[0,1]} \mbox{ $W$-geodesic},
\nonumber\\
&&\qquad\qquad S(\rho^0)<\infty, \ S(\rho^1)<\infty, \  \supp[\rho^0]\subset B_\epsilon(x), \  \supp[\rho^1]\subset B_\epsilon(y)\Big\}.
\end{eqnarray*}
Thus it remains to prove that $\ric(x,y)\ge \Ric_f(\gamma)$ for non-conjugate points $x,y$ with connecting geodesic $\gamma$.

Given non-conjugate points $x^0,x^1\in X$ with minimizing geodesic $(x^a)_{a\in[0,1]}$ and a number $\delta>0$, one can choose $\epsilon>0$ such that
$$\Ric_f(\dot\gamma^a,\dot\gamma^a)\ge 
\Ric_f(\dot x^a,\dot x^a)-\delta\qquad(\forall a)$$
for all $d$-geodesics $(\gamma^a)_{a\in[0,1]}$ with $\gamma^0\in B_\epsilon(x^0)$ and
$\gamma^1\in B_\epsilon(x^1)$.
Thus for each $W$-geodesic curve $\big(\rho^a\big)_{a\in[0,1]}$ with
$\supp[\rho^0]\subset B_\epsilon(x), \supp[\rho^1]\subset B_\epsilon(y)$
by standard calculations for first and second derivatives of $a\mapsto S(\rho^a_t)$
(cf. \cite{Sturm04}, \cite{AGS14Duke}, \cite{KoSt})
\begin{eqnarray*}
 \partial_a^- S(\rho^a_t)\big|_{a=1}-\partial_a^+ S(\rho^a_t)\big|_{a=0}
 &\ge&\int_\Gamma\int_0^1 \Ric_f(\dot\gamma^a,\dot\gamma^a)\,da\,d\Lambda(\gamma)\\
 &\ge&\int_0^1 \Ric_f(\dot x^a,\dot x^a)\,da-\delta\\
 &\ge& \Ric_f(x^\cdot)\cdot d^2(x^0,x^1)-\delta\\
 &\ge& \Ric_f(x^\cdot)\cdot W^2(\rho^0,\rho^1)-2\delta.
 \end{eqnarray*}
 Here
  $\Lambda_t$ denotes the probability measure on the space $\Gamma$ of all geodesics $\gamma:[0,1]\to X$ such that
 $\rho_t^a=(\pi^a)_\sharp \Lambda_t$
 where $\pi^a:\gamma\mapsto\gamma^a$ is the evaluation map.
 Since $\delta>0$ was arbitrary this proves $\ric(x^0,x^1)\ge \Ric_f(x^\cdot)$ which is the claim.
\end{proof}

\begin{remark}
A careful look on the previous argument allows to replace the maximum of the modulus of the curvature (along the minimizing geodesic from $x$ to $y$) in the definition of  $\sigma(\gamma)$ by
$$\sigma(\gamma)=\max_{a} \Big(\sum_{i,j=1}^n \big|R (\dot\gamma, e_i,\dot\gamma, e_j)\big|^2\Big)^{1/2}(\gamma^a)
=\max_{a} \Big(\sum_{i=1}^n \big|\mathrm{Rm} (\dot\gamma, e_i)\dot\gamma\big|^2\Big)^{1/2}(\gamma^a).$$
\end{remark}

\subsection{Speed and Action}

For various applications we have to tighten up the asymptotic estimates in the previous Theorem to estimates which hold locally uniformly in time.

\begin{theorem}\label{uni-lower}
For each compact subset $M_0\subset M$ and each $\delta>0$ there exist $t_0>0$ and $\epsilon>0$ such that for all $x,y\in M_0$ with $d(x,y)\le \epsilon$
\begin{equation}
e^{-(\Ric_f(x,y)+\delta)t}\cdot d(x,y)\le W\Big(\hat P_t\delta_x,\hat P_t\delta_y\Big)\le e^{(-\Ric_f(x,y)+\delta)t}\cdot d(x,y)
\end{equation}
with $\Ric_f(x,y):= \Ric_f(\gamma^\cdot)$
as in Theorem \ref{thm-sharp} being the average of the Ricci curvature along the  geodesic $\gamma$ connecting $x$ and $y$.
\end{theorem}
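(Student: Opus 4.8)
I would prove Theorem~\ref{uni-lower} by upgrading the pointwise asymptotics of Theorem~\ref{thm-sharp} to an estimate that is uniform over a compact set, using a compactness/continuity argument together with the explicit correction term already available. The key observation is that the correction term $\sigma(\gamma)\cdot\tan^2\big(\sqrt{\sigma(\gamma)}\,d(x,y)/2\big)$ in \eqref{sharp} is of order $O(d(x,y)^2)$ as $x,y$ come together, so for $x,y$ in a fixed compact set $M_0$ and $d(x,y)\le\epsilon$ with $\epsilon$ small, this term is dominated by any prescribed $\delta$. Thus the \emph{pointwise} statement already gives, for each fixed pair $(x,y)$, a rate arbitrarily close to $\Ric_f(x,y)$; what must be added is uniformity of the $t_0$ and of the approach to the limit as $t\to0$.

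\textbf{Key steps.} First I would fix a relatively compact open neighborhood $M_1$ of $M_0$ with $\overline{M_1}$ compact, and restrict attention to pairs $x,y\in M_0$ with $d(x,y)\le\epsilon_0$ where $\epsilon_0$ is chosen (using the bounded geometry / RCD$(K',N)$ hypothesis and compactness) so small that (a) the minimizing geodesic between any such $x,y$ is unique, stays in $M_1$, and contains no conjugate points, and (b) $\sigma(\gamma)\le\Sigma$ for a uniform constant $\Sigma$ depending only on $M_1$. Then the correction term is bounded by $\Sigma\tan^2(\sqrt\Sigma\,\epsilon_0/2)$, which can be made $\le\delta/2$ by shrinking $\epsilon_0$. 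Second, I would revisit the proof of the upper estimate in Theorem~\ref{thm-sharp}: the crucial identity there expresses $\tfrac12\partial_s^- W^2(\hat P_s\delta_x,\hat P_s\delta_y)|_{s=0}$ via Kantorovich potentials $\phi^a$ and the Bochner-type integral $-2r\int_{-r}^r\big(\Ric_f(\nabla\phi^a,\nabla\phi^a)+\|\mathrm D^2\phi^a\|_{2,2}^2\big)(\gamma^a)\,da$. I would argue that this differential identity, together with the heat kernel bounds valid under RCD$(K',N)$, actually yields a quantitative \emph{time-uniform} statement: for $t\in(0,t_0)$, $W^2(\hat P_t\delta_x,\hat P_t\delta_y)$ differs from $d^2(x,y)$ by $-2t\big(\Ric_f(x,y)+O(\epsilon_0^2)\big)d^2(x,y)+o(t)$ with the $o(t)$ uniform in $x,y\in M_0$ — the uniformity coming from smoothness of all data on the compact set $\overline{M_1}$ and from the fact that the Hopf–Lax flow, the potentials, and their Hessians depend continuously (indeed smoothly) on the endpoints, hence equicontinuously over the compact parameter range. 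Third, exponentiating: from $W^2(\hat P_t\delta_x,\hat P_t\delta_y)\le (1-2(\Ric_f(x,y)-\delta)t)\,d^2(x,y)$ for $t\le t_0$ one deduces $W(\hat P_t\delta_x,\hat P_t\delta_y)\le e^{(-\Ric_f(x,y)+\delta)t}d(x,y)$ after possibly shrinking $t_0$, and symmetrically for the lower bound using the matching lower asymptotic from Theorem~\ref{thm-sharp}.

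\textbf{Main obstacle.} The delicate point is establishing that the $o(t)$ error — i.e.\ the gap between the one-sided derivative at $t=0$ and the actual finite-$t$ behavior — is uniform over $x,y\in M_0$. Theorem~\ref{thm-sharp} as stated is purely infinitesimal (it controls $\partial_t^\pm$ at $t=0$), so one genuinely needs a second-order-in-$t$ control, not just the limit. I expect this to require either (i) a Taylor expansion of $t\mapsto W^2(\hat P_t\delta_x,\hat P_t\delta_y)$ to order $t^2$ with remainder controlled by sup-norms of the curvature, $f$, and finitely many derivatives on $\overline{M_1}$ (which are finite by compactness), or (ii) an argument showing $t\mapsto e^{(\Ric_f(x,y)-\delta)t}W(\hat P_t\delta_x,\hat P_t\delta_y)$ is monotone on $(0,t_0)$ via a differential inequality holding for \emph{all} small $t$, not just at $t=0$ — this latter route parallels the proof of (7)$\Rightarrow$(2) in the synthetic lower-bound theorem and is probably cleaner. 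A further technical nuisance is handling the dependence of the unique connecting geodesic (and its freedom in the choice of $\phi^0$ with vanishing Hessian at $\gamma^0$) continuously in $(x,y)$; this is routine on the conjugate-point-free compact region but must be stated carefully so that all constants in the Riccati comparison $h(a)\le\sqrt\sigma\tan(\sqrt\sigma|a|)$ are uniform.
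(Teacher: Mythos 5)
Your plan captures the correct diagnosis --- the pointwise result of Theorem~\ref{thm-sharp} must be upgraded to a quantitative finite-$t$ estimate, uniform over the compact set --- but the execution diverges from what actually works, and there are two genuine gaps.

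First, your proposal treats the two sides of the two-sided bound symmetrically, both as uniformizations of Theorem~\ref{thm-sharp}. But the two inequalities in \eqref{sharp} are proved by entirely different mechanisms, and only one of them can be fed into the Hopf--Lax machinery you describe. The \emph{upper} estimate $\RIC^+\le\Ric_f+\sigma\tan^2(\cdot)$ is the Hopf--Lax/Bochner calculation, and upgrading it gives the \emph{lower} bound $W\ge e^{-(\Ric_f+\delta)t}d$ (note: your third step has the inequality reversed --- a lower bound on $\partial_s^- W^2|_{s=0}$ yields $W\gtrsim$, not $W\lesssim$). The \emph{lower} estimate $\Ric_f\le\RIC^\flat$ in \eqref{sharp}, which is what you would need for the other side, is established via convexity properties of the entropy along $W$-geodesics ($\ric$), not via Kantorovich potentials under the heat flow, so there is nothing to ``revisit'' in the Hopf--Lax computation. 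The paper instead obtains the upper bound $W\le e^{(-\Ric_f+\delta)t}d$ from the integrated Gronwall estimate $W^2(\mu_t,\nu_t)\le e^{-2(\kappa-2\delta)(t-s)}W^2(\mu_s,\nu_s)$ in the proof of $\RIC^\flat=\ric$ (Theorem~\ref{thm-prec}), combined with $\Ric_f\le\ric$. That estimate already holds on a full time interval $(0,t_\epsilon)$, so no further uniformization is needed; a ``symmetric'' route through Theorem~\ref{thm-sharp} alone would not get you there.

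Second, for the direction you \emph{do} attack via Hopf--Lax, the appeal to ``equicontinuity'' and ``sup-norms of the curvature, $f$, and finitely many derivatives on $\overline{M_1}$'' does not close the argument. The Taylor remainder for $\tfrac1s\big[P_s\phi^{\pm r}(\cdot)-\phi^{\pm r}(\cdot)\big]-\Delta_f\phi^{\pm r}(\cdot)$ is governed by higher derivatives of the Kantorovich potentials $\phi^a$, which are not fixed geometric data: they depend on the pair $(x,y)$ and are only defined implicitly through the Hopf--Lax flow $\partial_a\phi^a=-\tfrac12|\nabla\phi^a|^2$. The crucial step the paper supplies is a quantitative third-derivative bound $|\mathcal D^3\phi^a|\le Cr$ for $|a|\le r$, obtained by normalizing $\phi^0=\tfrac12 x_1^2$ (so $\mathcal D^3\phi^0=0$), writing down the coupled nonlinear PDE system that $\mathcal D_{ijk}\phi^a$ satisfies under the Hopf--Lax flow (with coefficients bounded on the compact set by the curvature tensor and its first derivative), and invoking short-time existence. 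This yields a uniform $-Cr\cdot s$ error term in the finite-$s$ version of the Bochner inequality. Without this concrete mechanism, ``smooth dependence on endpoints'' remains an assertion rather than a proof, precisely because there is a choice to make in normalizing $\phi^0$ and the uniformity of the resulting $\mathcal D^3$-bound is exactly what has to be established.
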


\begin{proof} (i)
The upper estimate will follow 
from the fact that $\Ric_f(x,y)\le\ric(x,y)$ (proof of ``lower estimate'' in previous Theorem) and 
from the estimate \eqref{ppp} in the proof of Theorem \ref{thm-prec} below.

(ii) For the lower estimate, we have to improve our ``upper estimate'' in  Theorem \ref{thm-sharp}. Let us first prove that for  each compact subset $M_0\subset M$ and each $\delta>0$ there exist constants $C$ and $\epsilon>0$ such that for each unit speed geodesic $(\gamma^a)_{a\in [-r,r]}$ of length $2r\le \epsilon$ in $M_0$
there exists a
 family of functions 
$\phi^a, a\in[-r,r],$ which satisfy the Hopf-Lax relation $\partial_a\phi^a=-\frac12|\nabla\phi^a|^2$ with 
$\nabla\phi^a(\gamma^a)=\dot\gamma^a$ for  $a\in[-r,r]$ and 
\begin{equation}
\label{third order}
|\mathcal D^3 \phi|(a)\le C\qquad \mbox{for }a\in\{-r,r\}.\end{equation}

Indeed, as before we  fix an orthonormal frame $(x_i)_{i=1,\ldots,n}$ along $\gamma$ with $e_1=\dot\gamma$ and
put $\phi^0=\frac12 x_1^2$. For each $i,j,k$ we consider the evolution of the third derivatives $( \D_{ijk}\phi^a)_{i,j=1,\ldots,n}$  under the Hopf-Lax flow which amounts to a coupled system of nonlinear PDEs with smooth coefficients (depending on $\phi, \D\phi, \D^2\phi$, the Riemannian curvature tensor and its first derivative).
These coefficients are uniformly bounded on the compact set $M_0$.
Since the initial condition of theses coupled system at $a=0$ is $ \D_{ijk}\phi^0=0$, short time existence implies that there exist $\epsilon>0$ and $C$ such that $|\mathcal D^3 \phi|(a)\le Cr$ for $a\in [-r,r]$ provided  $r\le \epsilon/2$.

(iii) With the uniform estimate \eqref{third order} at hands, we will improve the upper estimate in  Theorem \ref{thm-sharp}. Note that for all $s>0$
\begin{eqnarray*}
\frac1{2s}\Big[ W^2(\hat P_{s}\delta_{x}, \hat P_{s}\delta_{y})-d^2(x,y)\Big]
\nonumber
&\ge&\nonumber
\frac{2r}s\Big[\int  -\phi^{-r}d\hat P_{s}\delta_{x}+
\int  \phi^rd\hat P_{s}\delta_{y}-\int-\phi^{-r} d\delta_x-\int\phi^r d\delta_x\Big]\\
&\ge&\nonumber 2r\Big[-\Delta_f\phi^{-r}(x)+\Delta_f\phi^r(y)-Cr\cdot s\big]\\
&\ge& -\big[\Ric_f(x,y)+h^2+C\cdot s/2\big]\cdot d^2(x,y)
\end{eqnarray*}
uniformly in $x,y\in M_0$ provided $d(x,y)\le\epsilon$ where $x=\gamma^{-r}$ and $y=\gamma^r$.
Here $$h=\sup_{x,y\in M_0}\sqrt{\sigma(x,y)}\cdot \tan\big( \sqrt{\sigma(x,y)}\cdot\epsilon\big)$$
with $\sigma(x,y)$ as before denoting the maximum modulus of the Riemannian curvature on the way from $x$ to $y$.
Thus the claim follows: for sufficiently small $\epsilon$ and  $s_0$ and all $s\in (0,s_0)$
\begin{equation*}
W\Big(\hat P_s\delta_x,\hat P_s\delta_y\Big)\ge e^{-(\Ric_f(x,y)+\delta)s}\cdot d(x,y).
\end{equation*}
\end{proof}

\begin{theorem}\label{ptw-sharp}
For each $v=(x,\xi)\in TM$
\begin{equation}\label{geo-sharp} 
-\partial_t\log\speed(\hat P_t\delta_{\gamma^a})\big|_{a=0,t=0}=\Ric_f(v,v)/|v|^2
\end{equation}
and
\begin{equation}\label{geo-sharpII} 
-\partial_t\speed(\hat P_t\delta_{\gamma^a})\big|_{a=0,t=0}=\Ric_f(v,v)/|v|
\end{equation}
where $\gamma^a=\exp_x(a\xi)$ for $a\in[-r,r]$ with some $r>0$ and where $$\speed(\mu^a)=\limsup_{b\to a}\frac1{|b-a|}W(\mu^a,\mu^b)
$$
denotes the metric speed of a curve $(\mu^a)_{a\in[-r,r]}$ in $(\Pz(X),W)$.
\end{theorem}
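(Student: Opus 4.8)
The plan is to reduce everything to the uniform two-sided estimate of Theorem~\ref{uni-lower}. First note that at $t=0$ the curve $a\mapsto \delta_{\gamma^a}$ is, for $|a|$ small, the image under $x\mapsto\delta_x$ of a constant-speed minimizing geodesic of speed $|\xi|=|v|$, so its metric speed at $a=0$ equals $|v|$; more generally, since $\gamma^0=x$,
\begin{equation*}
\speed\big(\hat P_t\delta_{\gamma^\cdot}\big)\big|_{a=0}=\limsup_{b\to0}\frac1{|b|}W\big(\hat P_t\delta_{x},\hat P_t\delta_{\gamma^b}\big)
\end{equation*}
for every $t\ge0$. It therefore suffices to control the right-hand side for small $t>0$.

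Fix $\delta>0$ and let $M_0\subset M$ be a compact ball around $x$; Theorem~\ref{uni-lower} provides $t_0>0$ and $\epsilon>0$ such that the two-sided estimate stated there holds for all points of $M_0$ at distance $\le\epsilon$. For $|b|$ small enough, $\gamma^b\in M_0$, the sub-arc of $a\mapsto\exp_x(a\xi)$ joining $x$ to $\gamma^b$ is the unique minimizing geodesic between them, hence $d(x,\gamma^b)=|b|\,|v|\le\epsilon$ and, by continuity of the Bakry-Emery tensor along this curve, $\Ric_f(x,\gamma^b)\to \Ric_f(v,v)/|v|^2$ as $b\to0$. Choosing $|b|$ so small that $\big|\Ric_f(x,\gamma^b)-\Ric_f(v,v)/|v|^2\big|\le\delta$ and applying Theorem~\ref{uni-lower}, we obtain for all $t\in(0,t_0)$
\begin{equation*}
e^{-(\Ric_f(v,v)/|v|^2+2\delta)t}\,|v|\ \le\ \frac1{|b|}W\big(\hat P_t\delta_{x},\hat P_t\delta_{\gamma^b}\big)\ \le\ e^{(-\Ric_f(v,v)/|v|^2+2\delta)t}\,|v|.
\end{equation*}
Since both bounds are independent of $b$, passing to the $\limsup$ over $b\to0$ gives the very same sandwich for $\speed\big(\hat P_t\delta_{\gamma^\cdot}\big)\big|_{a=0}$.

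Both conclusions now follow by elementary calculus. Taking logarithms and recalling $\speed\big(\hat P_t\delta_{\gamma^\cdot}\big)\big|_{a=0,t=0}=|v|$, the difference quotient $\tfrac1t\big[\log\speed\big(\hat P_t\delta_{\gamma^\cdot}\big)\big|_{a=0}-\log|v|\big]$ lies between $-\Ric_f(v,v)/|v|^2-2\delta$ and $-\Ric_f(v,v)/|v|^2+2\delta$ for all $t\in(0,t_0)$; letting $t\searrow0$ and then $\delta\to0$ yields \eqref{geo-sharp}. Likewise, writing $e^{ct}=1+ct+{\it o}(t)$ in the sandwich shows that $\tfrac1t\big[\speed\big(\hat P_t\delta_{\gamma^\cdot}\big)\big|_{a=0}-|v|\big]$ is squeezed between $\big(-\Ric_f(v,v)/|v|^2\mp2\delta\big)|v|+{\it o}(1)$, and $t\searrow0$ followed by $\delta\to0$ gives \eqref{geo-sharpII}.

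The only substantive ingredient is Theorem~\ref{uni-lower}: it is precisely what guarantees that the correction term $\sigma(\gamma)\tan^2\!\big(\sqrt{\sigma(\gamma)}\,d(x,y)/2\big)$ from the pointwise estimate of Theorem~\ref{thm-sharp} is $O(d(x,y)^2)$ and hence absorbed into $\delta$ once $x$ and $\gamma^b$ are close, and that the bounds are uniform in time near $t=0$ — without this uniformity one could not differentiate at $t=0$. The interchange of $\limsup_{b\to0}$ with the $t$-differentiation is harmless exactly because the estimates of Theorem~\ref{uni-lower} are uniform over $M_0$, so the $b$-dependence disappears before any limit in $t$ is taken.
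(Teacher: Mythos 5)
Your proof is correct and takes essentially the same route as the paper's: both reduce to the uniform two-sided estimate of Theorem~\ref{uni-lower} (valid locally uniformly in time and in the endpoints), use continuity of $\Ric_f$ along the geodesic to replace the averaged curvature $\Ric_f(x,\gamma^b)$ by $\Ric_f(v,v)/|v|^2$ up to $\delta$, pass the two-sided exponential bound from the Wasserstein distance to the metric speed, and differentiate at $t=0$. The paper states the squeeze for all $a\in[-r,r]$ rather than only $a=0$ and dispatches \eqref{geo-sharpII} with the words ``by chain rule,'' but the substance is identical to your expansion $e^{ct}=1+ct+o(t)$.
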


\begin{proof} Given $v\in TM$ and $\delta>0$, put $\kappa=\Ric_f(v,v)/|v|^2$. Then for some $r>0$ and all $a\in [-r,r]$
$$\kappa+\delta\ge \Ric_f(\dot\gamma^a,\dot\gamma^a)/|\dot\gamma^a|^2
\ge \kappa-\delta.$$
Therefore, for sufficiently small $t_0$ and all $t\le t_0$
$$e^{-(\kappa+2\delta)t}\cdot d(\gamma^a,\gamma^b)\le
W(\mu_t^a,\mu_t^b)\le e^{(-\kappa+2\delta)t}\cdot d(\gamma^a,\gamma^b)$$
where $\mu_t^a= \hat P_t\delta_{\gamma^a}$, and thus
$$e^{-(\kappa+2\delta)t}\cdot \speed(\gamma^a)\le \speed(\mu_t^a)\le e^{(-\kappa+2\delta)t}\cdot \speed(\gamma^a).$$
This implies
$$\kappa+2\delta\ge -\partial_t^\pm\log \speed(\mu_t^a)\big|_{t=0}\ge \kappa-2\delta$$
and thus (since $\delta>0$ was arbitrary)
$$ -\partial_t\log \speed(\gamma^a)\big|_{t=0}= \kappa=\Ric_f(v,v)/|v|^2.$$
The result for $\partial_t \speed(\mu_t^a)$ follows by chain rule.
\end{proof}

\begin{corollary}\label{thm-curves-sharp}
For each $\mathcal C^1$-curve $(\gamma^a)_{a\in[0,1]}$  in $M$ and each $p\in[1,\infty)$
\begin{equation}\label{curves-sharp-II} 
-\frac1p\partial_t\action_p(\hat P_t\delta_{\gamma^\cdot})\big|_{t=0}= \Ric^{(p)}_f(\gamma)
\end{equation}
with $
\Ric^{(p)}_f(\gamma):= \int_{0}^1 \Ric_f(\dot\gamma^a, \dot\gamma^a)\, |\dot\gamma^a|^{p-2}\,da
$. In particular, 
\begin{equation}\label{curves-sharp-IIa} 
-\frac12\partial_t\action_2(\hat P_t\delta_{\gamma^\cdot})\big|_{t=0}=  \int_{0}^1 \Ric_f(\dot\gamma^a, \dot\gamma^a)\,da.
\end{equation}

Moreover, for each $\mathcal C^1$-curve parametrized with constant speed
\begin{equation}\label{curves-sharp-II} 
-\frac1p\partial_t\log\action_p(\hat P_t\delta_{\gamma^\cdot})\big|_{t=0}= \Ric_f(\gamma)
\end{equation}
with $\Ric_f(\gamma):=
\Ric^{(0)}_f(\gamma)= \int_{0}^1 \Ric_f(\dot\gamma^a, \dot\gamma^a)\, |\dot\gamma^a|^{-2}\,da
$ being the mean value of   $\Ric_f(\dot\gamma^a, \dot\gamma^a)/ |\dot\gamma^a|^2$,
the normalized Ricci curvature along the curve.
\end{corollary}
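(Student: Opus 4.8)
The plan is to reduce Corollary~\ref{thm-curves-sharp} to the pointwise statement of Theorem~\ref{ptw-sharp} by a Riemann-sum argument, exactly parallel to the proof that any strong upper Ricci bound is a weak one. First I would fix a $\mathcal C^1$-curve $(\gamma^a)_{a\in[0,1]}$ and, writing $\mu_t^a:=\hat P_t\delta_{\gamma^a}$, observe that the $p$-action is a $\limsup$ over partitions of Riemann sums $\sum_i |a_i-a_{i-1}|^{1-p}\,W(\mu_t^{a_i},\mu_t^{a_{i-1}})^p$. For each fixed partition, Theorem~\ref{uni-lower} (applied on a compact neighbourhood $M_0$ of the image of $\gamma$) gives two-sided bounds $e^{-(\Ric_f(\gamma^{a_i},\gamma^{a_{i-1}})\pm\delta)t}\,d(\gamma^{a_i},\gamma^{a_{i-1}})\lessgtr W(\mu_t^{a_i},\mu_t^{a_{i-1}})$, uniformly in $t\in(0,t_0)$ once the partition is fine enough that all the $d(\gamma^{a_i},\gamma^{a_{i-1}})\le\epsilon$. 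Linearising $e^{\mp pst}=1\mp pst+o(t)$ and multiplying by $|a_i-a_{i-1}|^{1-p}d(\gamma^{a_i},\gamma^{a_{i-1}})^p$, the Riemann sum becomes $\action_p(\gamma^\cdot\text{-polygon}) \mp pt\sum_i \Ric_f(\gamma^{a_i},\gamma^{a_{i-1}})\,|a_i-a_{i-1}|^{1-p}d(\gamma^{a_i},\gamma^{a_{i-1}})^p + o(t)$.

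Next I would let the mesh of the partition go to $0$. By $\mathcal C^1$-regularity, $d(\gamma^{a_i},\gamma^{a_{i-1}})/|a_i-a_{i-1}|\to|\dot\gamma^{a}|$ and $\Ric_f(\gamma^{a_i},\gamma^{a_{i-1}})\to \Ric_f(\dot\gamma^a,\dot\gamma^a)/|\dot\gamma^a|^2$ uniformly (here one uses that averaging $\Ric_f$ along a short geodesic converges to the pointwise normalized value, which is the continuity used already in the lower-estimate argument), so $\sum_i \Ric_f(\gamma^{a_i},\gamma^{a_{i-1}})\,|a_i-a_{i-1}|^{1-p}d(\cdot)^p \to \int_0^1 \Ric_f(\dot\gamma^a,\dot\gamma^a)|\dot\gamma^a|^{p-2}\,da=\Ric^{(p)}_f(\gamma)$ and the leading Riemann sum tends to $\action_p(\gamma^\cdot)=\int_0^1|\dot\gamma^a|^p\,da$. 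Hence for every $\delta>0$ and all small $t$ one gets $\action_p(\gamma^\cdot)-(\Ric^{(p)}_f(\gamma)+\delta)\,pt+o(t)\le \action_p(\mu_t^\cdot)\le \action_p(\gamma^\cdot)-(\Ric^{(p)}_f(\gamma)-\delta)\,pt+o(t)$; differentiating at $t=0$ and letting $\delta\to0$ yields $-\frac1p\partial_t\action_p(\hat P_t\delta_{\gamma^\cdot})|_{t=0}=\Ric^{(p)}_f(\gamma)$, which is \eqref{curves-sharp-II} in its first form, with \eqref{curves-sharp-IIa} the case $p=2$. For the constant-speed statement one divides by $\action_p(\gamma^\cdot)=|\dot\gamma|^p$ (a positive constant) before differentiating; since $|\dot\gamma^a|\equiv|\dot\gamma|$, one has $\Ric^{(p)}_f(\gamma)/|\dot\gamma|^p=\int_0^1\Ric_f(\dot\gamma^a,\dot\gamma^a)|\dot\gamma^a|^{-2}\,da=\Ric_f(\gamma)$, giving the logarithmic form.

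The main obstacle is the interchange of the two limits: the $\limsup$ over partitions defining $\action_p$ and the $\partial_t|_{t=0}$. The two-sided uniform bound of Theorem~\ref{uni-lower} is precisely what makes this legitimate: for each fixed sufficiently fine partition the estimates hold uniformly on $(0,t_0)$, so the lower bound on $\action_p(\mu_t^\cdot)$ coming from a single partition already produces the correct $\liminf$-type bound on $-\frac1p\partial_t^-\log\action_p$, while the upper bound requires a little more care because $\action_p(\mu_t^\cdot)$ is a supremum over \emph{all} partitions, not just the one we chose; here one uses that, along a short geodesic segment, $W(\mu_t^{a_i},\mu_t^{a_{i-1}})\le e^{(-\Ric_f+\delta)t}d(\gamma^{a_i},\gamma^{a_{i-1}})\le e^{Ct}d(\gamma^{a_i},\gamma^{a_{i-1}})$ with $C$ uniform on $M_0$, so $\action_p(\mu_t^\cdot)\le e^{Cpt}\action_p(\gamma^\cdot)$ controls the contribution of \emph{any} partition and allows one to pass to the refined bound. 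A minor secondary point is ensuring the $o(t)$ errors from linearising the exponentials are uniform in $i$ after summation; this follows from the compactness of $M_0$ and finiteness of the partition, so it is routine once the structure above is in place.
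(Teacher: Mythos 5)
Your proposal is correct, and the $\le$-direction (lower bound on $\action_p(\hat P_t\delta_{\gamma^\cdot})$, yielding $-\frac1p\partial_t^-\action_p\le\Ric_f^{(p)}(\gamma)$) is essentially the paper's argument: apply the lower estimate of Theorem~\ref{uni-lower} along a fine partition, linearise via $e^{-s}\ge 1-s$, and pass the Riemann sums to integrals using the $\mathcal C^1$-regularity.

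For the $\ge$-direction you take a genuinely different route. You apply the \emph{upper} estimate of Theorem~\ref{uni-lower} symmetrically, keeping the $O(t^2)$ linearisation error uniform in the partition (which works because $\gamma([0,1])$ is compact, so $\Ric_f(\gamma^{a_{i-1}},\gamma^{a_i})$ is uniformly bounded for fine partitions). The paper instead works at the level of metric speed: it uses the identity $\action_p(\hat P_t\delta_{\gamma^\cdot})=\int_0^1\speed(\hat P_t\delta_{\gamma^a})^p\,da$ for absolutely continuous curves, a reverse-Fatou step dominated via $\speed(\hat P_t\delta_{\gamma^a})\le e^{-K't}\speed(\gamma^a)$ (the a priori RCD assumption), and the exact pointwise speed-derivative of Theorem~\ref{ptw-sharp}. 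Your approach has the virtue of being elementary and symmetric --- both inequalities come from the same partition-sum mechanism --- at the cost of needing the uniform quadratic remainder; the paper's avoids any quantitative error estimate but invokes the metric-speed representation of $\action_p$ and dominated convergence.

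One step in your upper-bound argument should be made precise: you observe that Theorem~\ref{uni-lower} only applies once $d(\gamma^{a_{i-1}},\gamma^{a_i})\le\epsilon$, while $\action_p(\mu_t^\cdot)$ is a supremum over \emph{all} partitions, and you try to resolve this with the crude bound $\action_p(\mu_t^\cdot)\le e^{Cpt}\action_p(\gamma^\cdot)$. That bound does not by itself "allow one to pass to the refined bound." The clean resolution is the monotonicity of $p$-action partition sums under refinement (a consequence of H\"older: $(d_1+d_2)^p\le d_1^p/s^{p-1}+d_2^p/(1-s)^{p-1}$ for $s\in(0,1)$), which implies the supremum coincides with the limit along fine partitions; restricting to fine partitions therefore loses nothing, and the estimate from Theorem~\ref{uni-lower} bounds the full supremum. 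With that replacement your argument is complete.
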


\begin{proof} 
The results concerning $\partial_t\log\action_p(\,.\,)$ for curves with constant speed are immediate consequences of the previous results concerning $\partial_t\action_p(\,.\,)$ since in this case $\action_p(\gamma^\cdot)= |\dot\gamma^a|^{p}$.

For the  $\le$-estimate in the first assertion, we apply  the previous Theorem.
Given $\delta>0$, $M_0$, $t_0$ and $\epsilon>0$ as above,  for every rectifiable curve $\gamma$ within $M_0$ and for all sufficiently fine,  finite partitions $0=a_0<a_1<\ldots<a_n=1$ and all $t\le t_0$ we have 
$$W\big(\mu_t^{a_i}, \mu_t^{a_{i-1}}\big)\ge e^{-\big(\Ric_f(\gamma^{a_{i-1}},\gamma^{a_{i}})+\delta\big)t}\cdot d\big(\gamma^{a_i}, \gamma^{a_{i-1}}\big).$$
Thus
\begin{eqnarray*}
\action_p(\mu_t^\cdot)&\ge&
\sum_i \frac1{|a_i-a_{i-1}|^{p-1}}W^p\big(\mu_t^{a_i}, \mu_t^{a_{i-1}}\big)\\
&\ge&
\sum_i \frac1{|a_i-a_{i-1}|^{p-1}}\Big[1-\Big( \Ric_f(\gamma^{a_{i-1}},\gamma^{a_{i}}) +\delta\Big)pt\Big]\cdot d^p\big(\gamma^{a_i}, \gamma^{a_{i-1}}\big)\\
&\ge&
\sum_i \frac1{|a_i-a_{i-1}|^{p-1}}\Big[1-\Big(\Ric_f(\dot\gamma^{a_i}, \dot\gamma^{a_i})/|\dot\gamma^{a_i}|^2 +2\delta\Big)pt\Big]\cdot d^p\big(\gamma^{a_i}, \gamma^{a_{i-1}}\big)
\end{eqnarray*}
since we assumed the curve to be  $\mathcal C^1$.
Passing to finer and finer partitions this implies
\begin{eqnarray*}
\action_p(\mu_t^\cdot)&\ge&
\action_p(\gamma^\cdot)\cdot [1-2\delta pt]  -pt\cdot \int_0^1 \Ric_f(\dot\gamma^a, \dot\gamma^a)\, |\dot\gamma^a|^{p-2}\,da\\
&=&
\action_p(\gamma^\cdot)\cdot [1-2\delta pt]-\Ric_f^{(p)}(\gamma)
\end{eqnarray*}
Since $\delta>0$ was arbitrary this proves the the upper estimate. Note that in the constant speed case, the last inequality can be rewritten as
\begin{eqnarray*}
\action_p(\mu_t^\cdot)&\ge&
\action_p(\gamma^\cdot)\cdot [1-2\delta pt+\Ric_f^{(0)}(\gamma)]
\end{eqnarray*}

For the $\ge$-estimate we use the fact that $\action_p(\gamma)=\int_0^1|\dot\gamma^a|^p da$ for each absolutely continuous curve.
Thus
\begin{eqnarray*}
\limsup_{t\to0}
\frac1t \Big[\action_p\big(\hat P_t\delta_{\gamma^\cdot}\big)-\action_p\big(\gamma^\cdot\big)\Big]
&=&\limsup_{t\to0}\int_0^1
\frac1t \Big[\speed\big(\hat P_t\delta_{\gamma^a}\big)^p-\speed\big(\gamma^a\big)^p\Big]da\\
&\le&\int_0^1\limsup_{t\to0}
\frac1t \Big[\speed\big(\hat P_t\delta_{\gamma^a}\big)^p-\speed\big(\gamma^a\big)^p\Big]da\\
&\le&p\int_0^1\Ric_f(\dot\gamma^a, \dot\gamma^a)\cdot \speed\big(\gamma^a\big)^{p-2}da
\end{eqnarray*}
where application of Fatou's lemma in the first inequality is justified since 
$$\speed\big(\hat P_t\delta_{\gamma^a}\big)\le e^{-K't}\speed\big(\gamma^a\big)$$
thanks to our a priori assumption $\Ric_f\ge K'$. The last inequality in the previous chain follows from  Theorem \ref{ptw-sharp} from above.
\end{proof}

\subsection{Upper Ricci Bounds}

\begin{theorem} For any weighted Riemannian manifold  $(M, g,f)$ satisfying some RCD$(K',N)$-condition and any number $\kappa\in\R$  the following assertions are equivalent:
\begin{itemize}
\item[(i)] $\Ric_f\le\kappa\cdot g$ on $TM$

\item[(ii)] For every $x\in X$ 
$$\limsup_{y,z\to x}\RIC^+(y,z)\le \kappa$$ 
\item[(iii)] For every $x\in X$ 
$$\limsup_{y,z\to x}\RIC^\flat(y,z)\le \kappa$$

\item[(iv)] For every geodesic $a\mapsto \gamma^a$ in $M$ there exists $p\in[1,\infty)$ such that
\begin{equation}\label{act-upper}
-\frac1p\partial_t^-\log\action_p(\hat P_t\delta_{\gamma^\cdot})\big|_{t=0}\le\kappa.
\end{equation}

\item[(v)] For every 
curve $a\mapsto \gamma^a$ in $M$, every $p\in[1,\infty)$ and every $\delta>0$ there exists $t_0>0$ such that for  all $t\in (0,t_0)$
\begin{equation}
\action_p(\hat P_t\delta_{\gamma^\cdot})\ge e^{-(\kappa+\delta) t}\cdot \action_p(\gamma^\cdot)
\end{equation}
\end{itemize}
\end{theorem}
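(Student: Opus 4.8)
The plan is to run two short cycles of implications, $(i)\Rightarrow(ii)\Rightarrow(iii)\Rightarrow(i)$ and $(i)\Rightarrow(v)\Rightarrow(iv)\Rightarrow(i)$, using the sharp estimates of Theorem~\ref{thm-sharp}, Theorem~\ref{uni-lower} and Corollary~\ref{thm-curves-sharp} as the only analytic input; everything else is bookkeeping. For the chain $(i)\Leftrightarrow(ii)\Leftrightarrow(iii)$ fix $x\in M$. If $y,z$ lie in a small ball around $x$ they are non-conjugate and joined by a unique short geodesic $\gamma_{yz}$, and Theorem~\ref{thm-sharp} gives
$$\Ric_f(\gamma_{yz})\ \le\ \RIC^\flat(y,z)\ \le\ \RIC^+(y,z)\ \le\ \Ric_f(\gamma_{yz})+\sigma(\gamma_{yz})\tan^2\!\Big(\sqrt{\sigma(\gamma_{yz})}\,d(y,z)/2\Big).$$
Since $g$ and $f$ are smooth, $\sigma(\gamma_{yz})$ is bounded on a compact neighbourhood of $x$ while $d(y,z)\to0$, so the correction term is $o(1)$ as $y,z\to x$; and, by smoothness together with continuous dependence of short geodesics on their endpoints, $\Ric_f(\gamma_{yz})\to\Ric_f(v,v)/|v|^2$ whenever $y,z\to x$ along the ray $a\mapsto\exp_x(av)$. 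Hence all three $\limsup_{y,z\to x}$ of the quantities in the display coincide and equal $\RIC^*(x)=\sup\{\Ric_f(v,v)/|v|^2:v\in T_xM\}$ (the consequence of Theorem~\ref{thm-sharp} recorded above), so $(ii)\Leftrightarrow(iii)\Leftrightarrow\big(\RIC^*(x)\le\kappa\ \text{for all }x\big)\Leftrightarrow(i)$.

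For the chain $(i)\Leftrightarrow(v)\Leftrightarrow(iv)$: for $(i)\Rightarrow(v)$, let $\gamma$ be a $\mathcal C^1$-curve with image in a compact $M_0$ and fix $p$ and $\delta>0$. Theorem~\ref{uni-lower} provides $t_0,\epsilon>0$ with $W(\hat P_t\delta_u,\hat P_t\delta_w)\ge e^{-(\Ric_f(u,w)+\delta)t}d(u,w)$ for $u,w\in M_0$, $d(u,w)\le\epsilon$, $t<t_0$; by $(i)$ this is $\ge e^{-(\kappa+\delta)t}d(u,w)$. Raising to the $p$-th power, summing over a sufficiently fine partition of $[0,1]$ and passing to the supremum over partitions produces the exponential lower bound on $\action_p(\hat P_t\delta_{\gamma^\cdot})$ asserted in $(v)$ — precisely the discretization already carried out in the proof of Corollary~\ref{thm-curves-sharp}, and $\delta>0$ was arbitrary. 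The step $(v)\Rightarrow(iv)$ is immediate: applied to a geodesic $\gamma$ with $p=1$ (so $\action_1=\length$), $(v)$ gives, after taking logarithms, dividing by $t>0$ and letting $t\downarrow0$, that $-\partial_t^-\log\length(\hat P_t\delta_{\gamma^\cdot})|_{t=0}\le\kappa+\delta$ for every $\delta$, hence $\le\kappa$, which is $(iv)$ with $p=1$. Finally, for $(iv)\Rightarrow(i)$: by Corollary~\ref{thm-curves-sharp}, for a constant-speed curve the limit $-\tfrac1p\partial_t\log\action_p(\hat P_t\delta_{\gamma^\cdot})|_{t=0}=\Ric_f(\gamma)$ exists and is independent of $p$, so $(iv)$ forces $\Ric_f(\gamma)\le\kappa$ for every geodesic $\gamma$; specializing to short geodesics through $x$ in direction $v$, as above, yields $\Ric_f(v,v)/|v|^2\le\kappa$ and hence $(i)$.

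The substantial analytic content lies entirely in the inputs — the two-sided asymptotics of Theorem~\ref{thm-sharp} and their locally-uniform-in-time refinement in Theorem~\ref{uni-lower} — which are taken as already established, so within this proof the main (mild) obstacle is purely the limiting arguments ``$\sigma(\gamma_{yz})\tan^2(\cdots)=o(1)$'' and ``$\Ric_f(\gamma_{yz})\to\Ric_f(v,v)/|v|^2$'' as $y,z\to x$ (equivalently, as the geodesic shrinks to a point in a fixed direction). Both are routine consequences of smoothness of $(g,f)$ and continuous dependence of short geodesics on their endpoints, and I do not anticipate any genuine difficulty beyond this.
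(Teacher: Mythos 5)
Your proposal is correct and follows the same route as the paper: the paper explicitly proves only (i) $\Rightarrow$ (v) via Theorem~\ref{uni-lower} and dismisses the remaining implications with the remark that they follow from the earlier sharp estimates (Theorem~\ref{thm-sharp} and Corollary~\ref{thm-curves-sharp}); your two cycles $(i)\Rightarrow(ii)\Rightarrow(iii)\Rightarrow(i)$ and $(i)\Rightarrow(v)\Rightarrow(iv)\Rightarrow(i)$ simply spell out that bookkeeping using exactly those lemmas.
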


According to various implications proven so far, there are plenty of further assertions which are equivalent to the previous ones.
For instance,
``$\limsup_{y,z\to x}\RIC^-(y,z)\le \kappa$ for all $x$''
which is weaker than (ii) and stronger than (iii)  or ``\eqref{act-upper} for every geodesic $\gamma^\cdot$ and \emph{every} $p\in[1,\infty)$'' 
which is weaker than (v) and stronger than (iv).

\begin{proof} It remains to prove (i) $\Rightarrow$ (v). Given the curve $\gamma$ and $\delta>0$, according to Theorem \ref{uni-lower}
the upper Ricci bound (i) implies that there exists $t_0>0$ such that
\begin{equation*}
W\Big(\hat P_t\delta_x,\hat P_t\delta_y\Big)\ge e^{-(\kappa+\delta) t}\cdot d(x,y)
\end{equation*}
uniformly for all $x,y$ in the range of $\gamma$ and all $t\in (0,t_0)$. Therefore, for each partition $0=a_0<a_1<\ldots<a_k=1$ we have
\begin{equation*}
\sum_i  \frac1{|a_{i-1}-a_i|^{p-1}}W\Big(\hat P_t\delta_{\gamma^{a_{i-1}}},\hat P_t\delta_{\gamma^{a_{i}}}\Big)^p\ge e^{-(\kappa+\delta) pt}\cdot \sum_i  \frac1{|a_{i-1}-a_i|^{p-1}} d({\gamma^{a_{i-1}}},{\gamma^{a_{i}}})^p
\end{equation*}
and thus
$\action_p(\hat P_t\delta_{\gamma^\cdot})\ge e^{-(\kappa+\delta) pt}\cdot \action_p(\gamma^\cdot)$.
\end{proof}

\section{Ricci Bounds in Terms of Convexity/Concavity of the Entropy}
Let us briefly recall the Lott-Sturm-Villani definition of synthetic lower bounds for the Ricci curvature \cite{SturmActa1, LV1}. 
The synthetic lower bound CD$(K,\infty)$  is defined as the weak $K$-convexity of the Boltzmann entropy $S$ on the geodesic space $(\Pz(X),W)$. For infinitesimally Hilbertian spaces, this can be rephrased as follows \cite{Sturm-srf}: for each geodesic  $\big(\rho^a\big)_{a\in[0,1]}$ with $S(\rho^0), S(\rho^1)<\infty$, the function $a\mapsto S(\rho^a)$ is upper semicontinuous  on
$[0, 1]$, absolutely continuous on $(0, 1)$, and satisfies
\begin{eqnarray}\label{ent-end}\frac1{W^2(\rho^0,\rho^1)}\cdot\Big[ \partial^-_a S(\rho^a)\big|_{a=1}-\partial_a^+ S(\rho^a)\big|_{a=0}
\Big]\ge K
\end{eqnarray}
where $$ \partial^-_a S(\rho^a)\big|_{a=1}=\liminf_{a\nearrow1}\frac1{1-a}\big[S(\rho^1)-S(\rho^{a}\big], \qquad\partial^+_a S(\rho^a)\big|_{a=0}=\limsup_{a\searrow0}\frac1a\big[S(\rho^a)-S(\rho^{0}\big].$$
The synthetic upper Ricci bounds to be presented below will reverse this inequality: instead of $K$-convexity we will request $\kappa$-concavity.
However, in the case of upper bounds, this $\kappa$-concavity property will be requested only for
$W$-geodesics with endpoints supported in arbitrarily small neighborhoods of given points.

\begin{definition}
Given a mm-space $(X,d,m)$ be define the function
$\ric: X\times X\to [-\infty,\infty]$ by
\begin{eqnarray}
\ric(x,y)&:=&\lim_{\epsilon\to0}\  \inf\Big\{
\frac1{W^2(\rho^0,\rho^1)}\cdot\Big[ \partial^-_a S(\rho^a)\big|_{a=1}-\partial_a^+ S(\rho^a)\big|_{a=0}
\Big]: \ \big(\rho^a\big)_{a\in[0,1]} \mbox{ $W$-geodesic},
\nonumber\\
&&\qquad\qquad S(\rho^0)<\infty, \ S(\rho^1)<\infty, \  \supp[\rho^0]\subset B_\epsilon(x), \  \supp[\rho^1]\subset B_\epsilon(y)\Big\}.
\label{ric}
\end{eqnarray}
\end{definition}

\begin{theorem}\label{thm-prec} For each mm-space $(X,d,m)$ satisfying an RCD$(-K,N)$-condition (for some finite numbers $K,N$) and for each pair of points $x,y\in X$
$$ \RIC^\flat(x,y)= \ric(x,y).$$
\end{theorem}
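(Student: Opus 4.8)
The plan is to establish the two inequalities $\RIC^\flat(x,y)\le\ric(x,y)$ and $\ric(x,y)\le\RIC^\flat(x,y)$ separately, in both cases exploiting the interplay between the dual heat flow $\hat P_t$ on $(\Pz(X),W)$ and the entropy $S$, which is the object shared by the two quantities. The common mechanism is the \emph{Benamou--Brenier / Otto calculus} picture on RCD-spaces: along the dual heat flow, $\frac{d}{dt}W^2(\hat P_t\mu,\hat P_t\nu)$ can be controlled from both sides by second-order information on $S$ evaluated along the $W$-geodesic joining $\mu$ to $\nu$. Concretely, for an optimal coupling/Kantorovich potential pair $(\phi,\psi)$ between $\mu$ and $\nu$ one has the first-variation identity $\frac12\partial_t^+W^2(\hat P_t\mu,\hat P_t\nu)\big|_{t=0}\ge -\int\Delta_f\phi\,d\mu-\int\Delta_f\psi\,d\nu$ (Kantorovich duality plus the fact that $\partial_t\hat P_t\mu=\Delta_f^*\hat P_t\mu$ weakly), while the right-hand side is, up to sign, exactly $\partial_a^+S(\rho^a)|_{a=0}-\partial_a^-S(\rho^a)|_{a=1}$ for the connecting $W$-geodesic $(\rho^a)$, by the known first-derivative formula $\partial_aS(\rho^a)=-\int\Delta_f\phi^a\,d\rho^a$ with $(\phi^a)$ a Hopf--Lax evolution. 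This gives, after taking $-\partial_t^+\log$ and dividing by $W^2$, a clean link between $\RIC^\flat$ and $\ric$ for each fixed pair $(\mu,\nu)$; the $\inf$ over $\epsilon$-neighborhoods and the $\lim_{\epsilon\to0}$ then match up on the two sides.

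For the direction $\RIC^\flat(x,y)\ge\ric(x,y)$ I would argue as follows. Fix $\epsilon>0$ and take $\mu,\nu$ with supports in $B_\epsilon(x)$, $B_\epsilon(y)$ and finite entropy; let $(\rho^a)$ be the (unique, by RCD) $W$-geodesic between them. Using the EVI/contraction estimate available under RCD$(-K,N)$ one controls $t\mapsto W(\hat P_t\mu,\hat P_t\nu)$ as a locally absolutely continuous function, and the first-variation computation above yields
\begin{equation*}
-\partial_t^+\log W(\hat P_t\mu,\hat P_t\nu)\big|_{t=0}
\ge \frac{1}{W^2(\rho^0,\rho^1)}\Big[\partial_a^-S(\rho^a)\big|_{a=1}-\partial_a^+S(\rho^a)\big|_{a=0}\Big].
\end{equation*}
Taking the infimum over all admissible $\mu,\nu$ and then $\epsilon\to0$ gives $\RIC^\flat(x,y)\ge\ric(x,y)$. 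The key technical input here is the estimate labelled \eqref{ppp} (referenced in the proof of Theorem \ref{uni-lower}), which presumably quantifies exactly this one-sided bound $\frac12 W^2(\hat P_s\mu,\hat P_s\nu)-\frac12 W^2(\mu,\nu)\le s\cdot(\text{entropy-concavity defect})+o(s)$ uniformly, so I would cite it.

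For the reverse direction $\RIC^\flat(x,y)\le\ric(x,y)$ one needs, given an arbitrary $W$-geodesic $(\rho^a)$ with endpoints in the $\epsilon$-neighborhoods realizing (nearly) the infimum in $\ric$, to produce initial data $\mu\subset B_{\epsilon'}(x)$, $\nu\subset B_{\epsilon'}(y)$ for which $-\partial_t^+\log W(\hat P_t\mu,\hat P_t\nu)$ is at most the entropy-defect of $(\rho^a)$. The natural choice is $\mu=\rho^0$, $\nu=\rho^1$ themselves, and then one needs a \emph{lower} bound on $\frac{d}{dt}W^2(\hat P_t\rho^0,\hat P_t\rho^1)$ matching the upper bound — this is the self-improved / EVI-type second-order estimate, essentially Kuwada-type duality between the heat flow gradient estimate and Wasserstein contraction, applied not to the distance from $\delta_x$ but between the two evolving measures. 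Concretely, using that $(\phi^a)$ extends the Kantorovich potentials and the Bochner/$\Gamma_2$ inequality only to the extent of RCD$(-K,N)$ (no upper curvature bound is assumed here, which is why this is subtle), one shows $\frac12\partial_t^-W^2(\hat P_t\rho^0,\hat P_t\rho^1)\big|_{t=0}\le -\int\Delta_f\phi^0 d\rho^0-\int\Delta_f\phi^1 d\rho^1$, i.e. the opposite inequality, so that in fact $\partial_t W^2$ exists and equals the entropy second difference. Passing to $-\partial_t^+\log$, dividing by $W^2(\rho^0,\rho^1)$, taking inf over $(\rho^a)$ and $\epsilon\to0$ closes the loop.

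\smallskip
\noindent\textbf{Main obstacle.} The hard part is the \emph{lower} bound on $\partial_t W^2(\hat P_t\mu,\hat P_t\nu)$ in terms of the entropy, i.e. the direction $\RIC^\flat\le\ric$, because it requires a two-sided first-order expansion of the Wasserstein distance along the heat flow with only a \emph{lower} Ricci bound RCD$(-K,N)$ in force. One cannot invoke any global smoothing or upper-curvature input; instead one must (i) justify that $t\mapsto W(\hat P_t\mu,\hat P_t\nu)$ is differentiable at $t=0^+$ with the derivative given by the Kantorovich dual expression — this needs the Hopf--Lax/subsolution structure of the potentials $\phi^a$ together with regularity of $\hat P_t$ on $\Pz(X)$ — and (ii) interchange $\partial_t$ with the disintegration $\Lambda$ of the $W$-geodesic over the geodesic space $\Gamma$, which is where an RCD-flavored Fatou/dominated-convergence argument (bounding the difference quotients by $e^{2Kt}$ times an integrable function, exactly as in the proof of the Proposition following Theorem on synthetic lower bounds) is indispensable. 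Making this limit interchange rigorous, and reconciling the $\liminf$ in $\ric$ with a genuine two-sided derivative, is the technical crux; everything else is bookkeeping with the definitions and the $\epsilon\to0$ limits.
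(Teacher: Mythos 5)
Your high-level framework (Otto calculus, Kantorovich duality for $\partial_t W^2$, first-derivative formula for $S$ along $W$-geodesics) is the right background, but the two crucial mechanisms that make the paper's proof work are both missing, and the steps you propose in their place would not go through as stated.

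\smallskip
\emph{The direction $\RIC^\flat\ge\ric$.} You write the first-variation inequality directly at $t=0$ and then ``take the infimum''. The obstruction is that the computation $-\frac12\partial_s^+W^2(\mu_s,\nu_s)\ge\partial_a^-S(\rho_s^a)|_{a=1}-\partial_a^+S(\rho_s^a)|_{a=0}$ is only available at positive time $s=t>0$ (where $\mu_t,\nu_t$ have the needed regularity), and there the $W$-geodesic $(\rho_t^a)$ connects $\hat P_t\mu$ and $\hat P_t\nu$, whose supports are \emph{no longer} contained in $B_\epsilon(x)$ and $B_\epsilon(y)$. So the entropy-defect appearing on the right-hand side cannot simply be compared with $\ric(x,y)$, which is an infimum over geodesics with \emph{localized} endpoints. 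This is the heart of the matter, and it is exactly where the paper introduces its key device: decompose the optimal dynamical plan $\Lambda_t$ into the part $\Lambda_{t,\epsilon}$ supported on geodesics going from $B_\epsilon(x)$ to $B_\epsilon(y)$ and the complementary part $\overline\Lambda_{t,\epsilon}$; invoke essential non-branching to split the entropy accordingly; estimate the localized part by $\ric(x,y)$ and the remainder by the a priori $\mathrm{CD}(-K,\infty)$ bound; and then use the upper Gaussian heat kernel and volume growth estimates available under $\mathrm{RCD}(-K,N)$ to show that $(1-\lambda_{t,\epsilon})W^2(\overline\rho^0_{t,\epsilon},\overline\rho^1_{t,\epsilon})\to0$ as $t\to0$. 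This is where the finite dimension $N$ enters essentially, something your sketch never uses. Your citation of estimate \eqref{ppp} is circular: that estimate is the \emph{output} of this decomposition argument, not an independently available input.

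\smallskip
\emph{The direction $\RIC^\flat\le\ric$.} Here you propose a two-sided first-order expansion of $t\mapsto W^2(\hat P_t\rho^0,\hat P_t\rho^1)$ at $t=0$, and you yourself flag that justifying the lower bound on $\partial_t W^2$ in an $\mathrm{RCD}$ space with only a lower Ricci bound is ``the technical crux''. That is correct, but you then leave precisely this crux unproved; the standard $\mathrm{RCD}$ calculus gives only the one-sided inequality, and the equality you invoke is not available. The paper avoids this entirely by a different argument (its Theorem~\ref{thm-prec-3}, proved under the weaker hypothesis $\mathrm{RCD}(-K,\infty)$): one does \emph{not} differentiate $W^2(\hat P_t\mu^0,\hat P_t\mu^1)$ at $t=0$. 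Instead, one takes \emph{interior} points $\mu^a,\mu^{1-b}$ of the $W$-geodesic, whose supports are still inside $B_\epsilon(x),B_\epsilon(y)$ for $a,b$ small, applies the $\RIC^\flat$-contraction to the pair $(\mu^a,\mu^{1-b})$, and controls the two end pieces of the geodesic via the EVI$(-K,\infty)$ inequalities anchored at $\mu^0$ and $\mu^1$. Adding the three resulting inequalities with weights $\tfrac1{2at},\tfrac1{2bt},\tfrac1{2(1-a-b)t}$, letting $t\to0$ and then $a,b\to0$, extracts precisely the one-sided entropy derivatives appearing in $\ric$. This midpoint/EVI trick is an essential idea of the proof and has no counterpart in your sketch.

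\smallskip
In short: the Kantorovich-duality picture is a reasonable starting point, but without the $\Lambda_t$-decomposition plus Gaussian estimates for the $\ge$-direction and the interior-point/EVI argument for the $\le$-direction, the proposal does not constitute a proof, and the steps you label ``one shows'' are exactly where the real difficulty lies.
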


\begin{proof}
The $\le$-inequality will be proven in a slightly more general context as the next Theorem. 

To prove the $\ge$-inequality, fix $x,y\in X$, $\delta>0$ and put $\kappa=\ric(x,y)$.
For $\epsilon>0$ small (to be made more precise in the sequel), let $\mu_0,\nu_0$ be given with 
$\supp[\mu_0]\subset B_{\epsilon/2}(x), \supp[\nu_0]\subset B_{\epsilon/2}(y)$.

For $t>0$  let
 $(\rho_t^a)_{a\in[0,1]}$ denote the $W$-geodesic connecting  $\mu_t=\hat P_t\mu=u_t\,m_0$ and $\nu_t=\hat P_t\nu_0=v_t\,m$. Denote the associated conjugate Kantorovich potentials by $\phi_t$ and $\psi_t$. Then by standard calculations in RCD$(-K,\infty)$-spaces (due to \cite{AGS14Duke}, cf. Chapter 4)
 \begin{eqnarray}\nonumber
 -\frac12 \partial_s^+ W^2(\mu_s,\nu_s)\big|_{s=t-}
 &\ge&\nonumber
 \E(\phi_t,u_t)+\E(\psi_t,v_t)\\
 &\ge &
 \partial_a^- S(\rho^a_t)\big|_{a=1}-\partial_a^+ S(\rho^a_t)\big|_{a=0}.\label{s1}
 \end{eqnarray}
 
 Let $\Lambda_t$ denote the probability measure on the space $\Gamma$ of all geodesics $\gamma:[0,1]\to X$ such that
 $\rho_t^a=(\pi^a)_\sharp \Lambda_t$
 where $\pi^a:\gamma\mapsto\gamma^a$ is the evaluation map. Put
 $\Gamma_\epsilon=\big\{\gamma\in\Gamma: \gamma^0\in B_\epsilon(x), \gamma^1\in B_\epsilon(y)\big\}$,
 $\lambda_{t,\epsilon}=\Lambda_t(\Gamma_\epsilon)$ and
 $$\Lambda_{t,\epsilon}\big(\,  . \,\big)=\frac1{\lambda_{t,\epsilon}}\cdot \Lambda_t\big(\, .  \,\cap \Gamma_\epsilon\big)\quad\mbox{
 as well as}\quad
 \overline\Lambda_{t,\epsilon}\big(\, . \, \big)=\frac1{1-\lambda_{t,\epsilon}}\cdot \Lambda_t\big(\, . \, \cap \complement\Gamma_\epsilon\big).$$
 Moreover, put 
  $\rho_{t,\epsilon}^a=(\pi^a)_\sharp \Lambda_{t,\epsilon}$ and 
 $\overline\rho_{t,\epsilon}^a=(\pi^a)_\sharp \overline\Lambda_{t,\epsilon}$.
 Then
 $\Lambda_t=\lambda_{t,\epsilon}\cdot \Lambda_{t,\epsilon}+(1-\lambda_{t,\epsilon})\cdot \overline\Lambda_{t,\epsilon}$ as well as
  $\rho^a_t=\lambda_{t,\epsilon}\cdot \rho^a_{t,\epsilon}+(1-\lambda_{t,\epsilon})\cdot \overline\rho^a_{t,\epsilon}$.
 Optimality of $(\rho_t^a)_{a\in[0,1]}$ implies
  $$W^2\big(\rho^0_t,\rho^1_t\big)=\lambda_{t,\epsilon}\cdot W^2\big(\rho^0_{t,\epsilon},\rho^1_{t,\epsilon}\big)
  +(1-\lambda_{t,\epsilon})\cdot W^2\big(\overline\rho^0_{t,\epsilon},\overline\rho^1_{t,\epsilon}\big).$$
  Furthermore, the essential non-branching property of RCD$(-K,\infty)$-spaces implies that for all $a\in (0,1)$ the measures $\rho^a_{t,\epsilon}$ and $\overline\rho^a_{t,\epsilon}$ are supported by disjoint sets. Therefore
  $$S(\rho^a)=\lambda_{t,\epsilon}\cdot S(\rho^a_{t,\epsilon})+(1-\lambda_{t,\epsilon})\cdot S(\overline\rho^a_{t,\epsilon})+\lambda_{t,\epsilon}\log\lambda_{t,\epsilon}+(1-\lambda_{t,\epsilon})\log(1-\lambda_{t,\epsilon})$$
  whereas for $a=0$ and $a=1$
   $$S(\rho^a)\ge\lambda_{t,\epsilon}\cdot S(\rho^a_{t,\epsilon})+(1-\lambda_{t,\epsilon})\cdot S(\overline\rho^a_{t,\epsilon})+\lambda_{t,\epsilon}\log\lambda_{t,\epsilon}+(1-\lambda_{t,\epsilon})\log(1-\lambda_{t,\epsilon}).$$
Thus
\begin{eqnarray}\nonumber
\partial_a^- S(\rho^a_t)\big|_{a=1}-\partial_a^+ S(\rho^a_t)\big|_{a=0}
&\ge&
\lambda_{t,\epsilon}\Big[
\partial_a^- S(\rho^a_{t,\epsilon})\big|_{a=1}-\partial_a^+ S(\rho^a_{t,\epsilon})\big|_{a=0}\Big]\\
&&
+(1-\lambda_{t,\epsilon})\Big[
\partial_a^- S(\overline\rho^a_{t,\epsilon})\big|_{a=1}-\partial_a^+ S(\overline\rho^a_{t,\epsilon})\big|_{a=0}\Big].\label{s2}
 \end{eqnarray}
 
 By construction, the $W$-geodesic $(\rho_{t,\epsilon}^a)_{a\in[0,1]}$ is supported by $d$-geodesics $(\gamma^a)_{a\in[0,1]}$ emanating  in $B_\epsilon(x)$ and terminating 
  in $B_\epsilon(y)$. Thus choosing $\epsilon>0$ small enough, by the very definition of $\kappa=\ric(x,y)$ we can ensure that
 \begin{equation}\label{s3}
 \partial_a^- S(\rho^a_{t,\epsilon})\big|_{a=1}-\partial_a^+ S(\rho^a_{t,\epsilon})\big|_{a=0}\ge
 (\kappa-\delta)\cdot W^2\big(\rho^0_{t,\epsilon},\rho^1_{t,\epsilon}\big).
 \end{equation}

 On the other hand, for the geodesic $(\overline\rho_{t,\epsilon}^a)_{a\in[0,1]}$ we make use of the a priori assumption CD$(-K,\infty)$ which yields
  \begin{equation}\label{s4}
 \partial_a^- S(\overline\rho^a_{t,\epsilon})\big|_{a=1}-\partial_a^+ S(\overline\rho^a_{t,\epsilon})\big|_{a=0}\ge
 -K\cdot W^2\big(\overline\rho^0_{t,\epsilon},\overline\rho^1_{t,\epsilon}\big).
 \end{equation}
Summing up the estimates \eqref{s1} - \eqref{s4} we thus obtain
\begin{eqnarray}\nonumber
 -\frac12 \partial^+_s W^2(\mu_s,\mu_s)\big|_{s=t-}
 &\ge&\nonumber
\lambda_{t,\epsilon}
 (\kappa-\delta)\cdot W^2\big(\rho^0_{t,\epsilon},\rho^1_{t,\epsilon}\big)
 -(1-\lambda_{t,\epsilon})
 K\cdot W^2\big(\overline\rho^0_{t,\epsilon},\overline\rho^1_{t,\epsilon}\big)\\
 &=&
 (\kappa-\delta)\cdot W^2\big(\mu_{t},\nu_{t}\big)
 -(1-\lambda_{t,\epsilon})
 (K+\kappa-\delta)\cdot W^2\big(\overline\rho^0_{t,\epsilon},\overline\rho^1_{t,\epsilon}\big).
 \end{eqnarray}
 It remains to control the second term in the last line.
 
 On RCD$(-K,N)$-spaces (due to controlled growth of doubling and Poincar\'e constants) we have  upper Gaussian estimates for the heat kernel and for the volume growth which allow to estimate
 \begin{eqnarray*}
 1-\lambda_{t,\epsilon}&\le&\sup_{x'\in B_{\epsilon/2}(x)}
 \int_{\complement B_\epsilon(x)}p_t(x',z)\,dm(z)+
 \sup_{y'\in B_{\epsilon/2}(y)}
 \int_{\complement B_\epsilon(y)}p_t(y',z)\,dm(z)\\
 &\le&
 \frac C{m(B_{\sqrt t}(x))}\int_\epsilon^\infty e^{-\frac{r^2}{Ct}}\cdot |\partial_r m(B_r(x)|\,dr\\
 &&\qquad\quad+
 \frac C{m(B_{\sqrt t}(y))}\int_\epsilon^\infty e^{-\frac{r^2}{Ct}}\cdot |\partial_r m(B_r(y)|\,dr\\
 &\le&
 \frac{2C}{\sqrt t}\int_\epsilon^\infty e^{-\frac{r^2}{Ct}+\sqrt{|K|N}r}\cdot \Big(\frac {r}{\sqrt t}\Big)^{N-1}\,dr\
 \le\ C'e^{-\epsilon^2/(C't)}
 \end{eqnarray*}
 uniformly in $x,y\in X$ and $t\in(0,1]$ (see e.g. \cite{Sturm-DirII}, \cite{LierlSa}) as well as
 \begin{eqnarray*}
 \inf_{x'\in B_{\epsilon/2}(x)}\int_{\complement B_\epsilon(x)}p_t(x',z)\,dm(z)
 &\ge&
 \frac 1{C\, m(B_{\sqrt t}(x))}\int_\epsilon^{2\epsilon} e^{-C\frac{r^2}{t}}\cdot |\partial_r m(B_r(x)|\,dr.
 \end{eqnarray*}
Moreover, 
  \begin{eqnarray*}
  \limsup_{t\searrow 0} W\big(\overline\rho^0_{t,\epsilon},\overline\rho^1_{t,\epsilon}\big)
 &\le&
  \limsup_{t\searrow 0} W\big(\overline\rho^0_{t,\epsilon},\delta_x\big)+d(x,y)+
   \limsup_{t\searrow 0} W\big(\delta_y,\overline\rho^1_{t,\epsilon}\big)
  \end{eqnarray*}
  and
  \begin{eqnarray*}
  W^2\big(\overline\rho^0_{t,\epsilon},\delta_x\big)
  &\le&
  \frac1{\int\int_{\complement B_\epsilon(x)}p_t(x',z)\,dm(z)d\mu_0(x')}
  \int\int_{\complement B_\epsilon(x)}d^2(x',z)p_t(x,z)\,dm(z)d\mu_0(x')\\
  &\le&(1+\epsilon/2)^2+
  \frac{\sup_{x'\in B_{\epsilon/2}(x)}\int_{\complement B_{1}(x)}d^2(x,z)p_t(x',z)\,dm(z)}{\inf_{x'\in B_{\epsilon/2}(x)} \int_{\complement B_\epsilon(x)}p_t(x',z)\,dm(z)}
  \\
   &\le&2+C^2\frac{
   \int_{1}^\infty r^2\cdot e^{-\frac{r^2}{Ct}}e^{\sqrt{|K|N}r}\cdot |\partial_r m(B_r(x)|\,dr
   }{
   \int_\epsilon^{2\epsilon} e^{-C\frac{r^2}{t}}\cdot |\partial_r m(B_r(x)|\,dr
   }\\
   &\le&2+C'\frac{
   \int_1^\infty r^{N+1}\cdot e^{-\frac{r^2}{Ct}}e^{\sqrt{|K|N}r}\,dr
   }{\epsilon^{N-1}\cdot
   \int_\epsilon^{2\epsilon} e^{-C\frac{r^2}{t}}\,dr
   }\ \le \ 2+\overline C \frac{\sqrt t}{\epsilon^N}
   \end{eqnarray*}
   for some constant $\overline C$ and all $t\le1$
   provided $\epsilon\le \frac1{ 2 C}$.
 
   Thus
   $$(1-\lambda_{t,\epsilon})
 \cdot W^2\big(\overline\rho^0_{t,\epsilon},\overline\rho^1_{t,\epsilon}\big)\to0$$
 for $t\to0$.
 Finally, note that   
 $\liminf_{t\to0}W^2\big(\rho^0_{t},\rho^1_{t}\big)>0$. Hence, for some $t_\epsilon>0$ and all  $t\in (0,t_\epsilon)$
 \begin{eqnarray*}
 -\frac12 \partial^+_s W^2(\mu_s,\nu_s)\big|_{s=t-}
 &\ge&
 (\kappa-2\delta)\cdot W^2\big(\mu_{t},\nu_{t}\big).
 \end{eqnarray*}
 Integrating this w.r.t.\ $t$ yields
 \begin{eqnarray}\label{ppp}W^2(\mu_t,\nu_t)\le e^{-2(\kappa-2\delta)(t-s)}
 W^2(\mu_s,\nu_s)\end{eqnarray}
 for all $0<s<t<t_\epsilon$. Letting first $s$ go to 0 and then $t$ go to 0 yields \eqref{ppp} (with right instead of left derivative) for $t=0$. More precisely,
 \begin{eqnarray*}
 -\frac12 \partial_t^+ W^2(\mu_t,\nu_t)\big|_{t=0}
 &\ge&
 (\kappa-2\delta)\cdot W^2\big(\mu_{t},\nu_{t}\big).
 \end{eqnarray*}
 Since $\delta>0$ can be chosen arbitrarily small, this proves 
 \begin{eqnarray*}
 -\frac12 \partial_t^+ W^2(\mu_t,\nu_t)\big|_{t=0}
 &\ge&
 \kappa\cdot W^2\big(\mu_{t},\nu_{t}\big).
 \end{eqnarray*}
 Now letting first  $\epsilon\to0$ and then $\delta\to0$,
this yields the claim.
\end{proof}

\begin{theorem}\label{thm-prec-3} For each mm-space $(X,d,m)$ satisfying some RCD$(-K,\infty)$-condition and for each pair of points $x,y\in X$
$$ \RIC^\flat(x,y)\le \ric(x,y).$$
\end{theorem}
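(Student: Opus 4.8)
The plan is to establish the estimate at a fixed scale $\epsilon$ and then let $\epsilon\to0$. Write $\RIC^\flat_\epsilon(x,y)$ and $\ric_\epsilon(x,y)$ for the infima occurring in the definitions of $\RIC^\flat(x,y)$ and $\ric(x,y)$ before the limit in $\epsilon$ is taken; it suffices to show $\RIC^\flat_\epsilon(x,y)\le\ric_\epsilon(x,y)$ for all small $\epsilon>0$. Fix such an $\epsilon$ (and assume $x\ne y$ and $\epsilon$ small enough that $W(\rho^0,\rho^1)\ge d(x,y)-2\epsilon>0$ for all admissible endpoints; if no admissible geodesic exists then $\ric_\epsilon=+\infty$ and there is nothing to prove). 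Let $(\rho^a)_{a\in[0,1]}$ be an arbitrary $W$-geodesic with $S(\rho^0),S(\rho^1)<\infty$, $\supp[\rho^0]\subset B_\epsilon(x)$ and $\supp[\rho^1]\subset B_\epsilon(y)$, and set $\mu:=\rho^0$, $\nu:=\rho^1$. These are admissible competitors in the infimum defining $\RIC^\flat_\epsilon(x,y)$, so everything reduces to the single inequality
\begin{equation*}
-\partial_t^+\log W\big(\hat P_t\mu,\hat P_t\nu\big)\big|_{t=0}\ \le\ \frac1{W^2(\rho^0,\rho^1)}\Big[\partial_a^-S(\rho^a)\big|_{a=1}-\partial_a^+S(\rho^a)\big|_{a=0}\Big],
\end{equation*}
which, after passing to the infimum over all such $(\rho^a)$, gives $\RIC^\flat_\epsilon(x,y)\le\ric_\epsilon(x,y)$ and hence the theorem.

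To prove this inequality I would run the computation behind \eqref{s1}, but evaluated at the initial time $t=0$ rather than at an interior time $t>0$ --- this is exactly where the required reversal of the inequality comes from. Since $t\mapsto W(\hat P_t\mu,\hat P_t\nu)$ is continuous at $0$ (by the RCD$(-K,\infty)$-contraction and the $W$-continuity of $\hat P_t$) and $W(\mu,\nu)>0$, the chain rule reduces the claim to $-\tfrac12\,\partial_t^+W^2(\hat P_t\mu,\hat P_t\nu)|_{t=0}\le\partial_a^-S(\rho^a)|_{a=1}-\partial_a^+S(\rho^a)|_{a=0}$. Choose conjugate Kantorovich potentials $(\phi,\psi)$ for the optimal coupling of $\mu$ and $\nu$ (the one supported by $(\rho^a)$), so that $\phi\oplus\psi\le\tfrac12 d^2$, $\tfrac12 W^2(\mu,\nu)=\int\phi\,d\mu+\int\psi\,d\nu$, and $\phi,\psi$ are Lipschitz since $\mu,\nu$ have bounded supports. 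For every $t>0$ the pair $(\phi,\psi)$ remains admissible in the Kantorovich dual for $\hat P_t\mu,\hat P_t\nu$, whence, using $\int\phi\,d\hat P_t\mu=\int P_t\phi\,d\mu$,
\begin{equation*}
\tfrac12 W^2\big(\hat P_t\mu,\hat P_t\nu\big)-\tfrac12 W^2(\mu,\nu)\ \ge\ \int (P_t\phi-\phi)\,d\mu+\int (P_t\psi-\psi)\,d\nu .
\end{equation*}
Dividing by $t$ and letting $t\searrow0$, the right-hand side tends to $-\E(\phi,u)-\E(\psi,v)$ (with $u,v$ the densities of $\mu,\nu$), so $-\tfrac12\,\partial_t^+W^2(\hat P_t\mu,\hat P_t\nu)|_{t=0}\le\E(\phi,u)+\E(\psi,v)$. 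Finally, the first-order differentiability of $S$ along $W$-geodesics with finite-entropy endpoints in RCD$(-K,\infty)$-spaces (the same calculus that yields \eqref{s1}, \cite{AGS14Duke}) gives $\E(\phi,u)+\E(\psi,v)=\partial_a^-S(\rho^a)|_{a=1}-\partial_a^+S(\rho^a)|_{a=0}$, and combining the last two displays proves the claim. The conceptual point to stress is that here the potentials are frozen at the \emph{smaller} time $0$ and tested against the \emph{later} measures $\hat P_t\mu,\hat P_t\nu$: optimality of $(\phi,\psi)$ then bounds the difference quotient of $W^2$ from \emph{below}, hence $-\tfrac12\partial_t^+W^2$ from \emph{above}; freezing at the larger time, as in the proof of Theorem \ref{thm-prec}, produces the opposite inequality. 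In particular only the $\infty$-part of the curvature-dimension condition is used, which is why the dimension parameter may be dropped here.

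The step I expect to be the main obstacle is the passage $t\searrow0$ in the last display together with the identification of $\E(\phi,u)+\E(\psi,v)$ with the entropy endpoint derivatives, since $\mu,\nu$ are only assumed to have finite entropy: their densities need not be square-integrable and $\phi,\psi$ need not lie in the domain of the generator, so $\E(\phi,u)$ must be read through the signed-measure Laplacian of the $c$-concave potential $\phi$ and the weak form of the semigroup ($\int(P_t\phi-\phi)\,d\mu=\int\phi\,(P_t u-u)\,dm$). I would deal with this by the usual RCD$(-K,\infty)$ regularization: first prove the estimate when $\mu,\nu$ have bounded densities of bounded support (where all objects are classical), then approximate a general admissible pair by $\hat P_s\mu,\hat P_s\nu$ and let $s\searrow0$, using the $W$-contraction of the heat flow, lower semicontinuity and $(-K)$-convexity of $S$, and upper semicontinuity of the relevant one-sided derivatives --- precisely the toolbox behind \eqref{s1}. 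No Gaussian heat-kernel or volume-growth bounds (the only place where $N<\infty$ entered in Theorem \ref{thm-prec}) are needed, so the argument works under RCD$(-K,\infty)$ alone.
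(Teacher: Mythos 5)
Your approach is genuinely different from the paper's, and the difference is instructive. The paper proves $\RIC^\flat\le\ric$ by a \emph{three-piece EVI argument}: fixing the original geodesic $(\mu^a)$, it flows only the two interior pieces $\mu^a$ and $\mu^{1-b}$ under $\hat P_t$, uses the EVI$(-K,\infty)$ inequality on the two short end segments from the fixed endpoints $\mu^0,\mu^1$ to the flowed interior points, and uses the $\RIC^\flat$-bound on the long middle segment; the key point is that this decomposition never requires identifying the endpoint entropy derivatives $\partial_a^-S|_{a=1}-\partial_a^+S|_{a=0}$ with $\E(\phi,u)+\E(\psi,v)$. Your proposal instead runs Kantorovich duality with potentials frozen at $t=0$, which correctly gives
$-\tfrac12\partial_t^+W^2(\hat P_t\mu,\hat P_t\nu)\big|_{t=0}\le\E(\phi,u)+\E(\psi,v)$,
and then tries to close the argument by the identity $\E(\phi,u)+\E(\psi,v)=\partial_a^-S(\rho^a)|_{a=1}-\partial_a^+S(\rho^a)|_{a=0}$.

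That last step is the gap. The ``calculus behind \eqref{s1}'' that you invoke gives exactly the opposite inequality to the one you need: \eqref{s1} reads
$\E(\phi_t,u_t)+\E(\psi_t,v_t)\ \ge\ \partial_a^-S(\rho^a_t)|_{a=1}-\partial_a^+S(\rho^a_t)|_{a=0}$,
and this is the direction used in the paper to prove $\RIC^\flat\ge\ric$ (Theorem~\ref{thm-prec}). Chaining your Kantorovich estimate with \eqref{s1} therefore produces two upper bounds on $-\tfrac12\partial_t^+W^2|_{t=0}$ and on $\partial^-S|_1-\partial^+S|_0$ by the same quantity $\E(\phi,u)+\E(\psi,v)$, which does not compare them to each other. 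You would need the genuine equality, or at least the reverse inequality
$\E(\phi,u)+\E(\psi,v)\le\partial_a^-S(\rho^a)|_{a=1}-\partial_a^+S(\rho^a)|_{a=0}$,
and neither is claimed in the paper nor supplied in your argument. Moreover, the regularization you propose to rescue the degenerate case (replacing $\mu,\nu$ by $\hat P_s\mu,\hat P_s\nu$) destroys the two constraints that matter: the regularized measures no longer have support in $B_\epsilon(x),B_\epsilon(y)$, and the $W_2$-geodesic between $\hat P_s\mu$ and $\hat P_s\nu$ is \emph{not} the heat-flowed image of $(\rho^a)$, so its endpoint entropy derivatives need not converge to those of the original geodesic as $s\searrow0$. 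The paper's EVI decomposition is precisely the device that makes such identifications unnecessary; as written, your proof requires a first-variation equality for the entropy along Wasserstein geodesics that remains unproved and is at least as hard as the theorem itself.
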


\begin{proof}
Fix $x,y\in X$ and $\delta>0$.
For $\epsilon>0$ small (to be specified  in the sequel), let $\mu^0,\mu^1$ be given with 
$\supp[\mu^0]\subset B_{\epsilon/2}(x), \supp[\mu^1]\subset B_{\epsilon/2}(y)$.
Let $(\mu^a)_{a\in[0,1]}$ denote the connecting geodesic and put
$$\mu^a_t=\hat P_t\mu^a.$$

For $a,b$ small enough, $\supp[\mu^a]\subset B_{\epsilon}(x), \supp[\mu^{1-b}]\subset B_{\epsilon}(y)$.
Thus for $t$ sufficiently small we can estimate in terms of $\kappa:=\RIC^\flat(x,y)$
\begin{equation}\label{mid}
W^2(\mu_t^a,\mu_t^{1-b})\le e^{2(\delta-\kappa)t}\cdot W^2(\mu^a,\mu^{1-b})
\end{equation}
(provided we had chosen $\epsilon$ approriately).

Moreover, applying the EVI$(-K,\infty)$-property of the dual heat flow yields
\begin{equation}\label{evi-0}
e^{-Kt}\,W^2(\mu^0,\mu^a_t)-W^2(\mu^0,\mu^a)\le 2\frac{1-e^{-Kt}}K \cdot\big[S(\mu^0)-S(\mu^a_t)\big]
\end{equation}
as well as
\begin{equation}\label{evi-1}
e^{-Kt}\,W^2(\mu^1,\mu^{1-b}_t)-W^2(\mu^1,\mu^{1-b})\le 2\frac{1-e^{-Kt}}K \cdot\big[S(\mu^1)-S(\mu^{1-b}_t)\big].
\end{equation}
Multiplying inequality \eqref{evi-0} by $\frac1{2at}$, inequality \eqref{evi-1} by $\frac1{2bt}$, inequality \eqref{mid} by $\frac1{2(1-a-b)t}$,  adding up the three resulting inequalities yields, and passing to the limit $t\to0$ yields
\begin{eqnarray*}
\big[(1-a-b)(\kappa-\delta)-(a+b)\frac K2\big]\cdot W^2(\mu^0,\mu^1)
\le \frac1a\big[S(\mu^0)-S(\mu^a_t)\big] +\frac1b\big[S(\mu^1) -S(\mu^{1-b}_t)\big].
\end{eqnarray*}
Thus for $a,b\to 0$ we obtain
$$\kappa-\delta\le\frac1{W^2(\mu^0,\mu^1)}\cdot\Big[ \partial^-_a S(\rho^a_t)\big|_{a=1}-\partial^+_a S(\rho^a_t)\big|_{a=0}
\Big]$$
which is the claim (since $\delta>0$ was arbitrary).
\end{proof}

\section{A Stable Upper Bound}
Our previous characterization of upper Ricci bounds in terms of the function $\ric$ was based on a partial reversal of the characterization of lower Ricci bounds \eqref{ent-end}.
Now we will formulate an alternate characterization of upper Ricci bounds
based on a partial reversal of the characterization of lower Ricci bounds \eqref{ent-mid}.
Recall that the latter characterization is stable under convergence of  mm-spaces provided the mid-point convexity of the entropy is properly formulated: given the endpoints, the midpoint (or the connecting geodesic) can be chosen freely.

 To obtain stability for the synthetic upper Ricci bound, the crucial trick now is to formulate the mid-point concavity of the entropy  in such a way that the midpoint and one endpoint will (essentially) be given and the other endpoint can be chosen freely.
 
 The subsequent results will hold in the general context of metric measure spaces. Neither infinitesimal Hilbertianity nor any kind of curvature-dimension condition is requested. To simplify the subsequent presentation, we assume that the measure $m$ of a mm-space always has full support.

\begin{definition}
We say that  the mm-space $(X,d,m)$ has  robust synthetic upper Ricci bound $K$ if there exists an upper semicontinuous function $\overline K:(0,\infty)\to(-\infty,\infty]$ with $\lim_{r\to 0}\overline K(r)=K$  such that for all $r>0$ with $\overline K(r)<\infty$,  all $x,y\in X$    with $d(x,y)=r$, and all $\mu^0\in\Pz(X)$ with $W\big(\mu^0, \delta_x\big)<r^4$  there exists a $W$-geodesic $(\mu^a)_{a\in[-1,1]}$ with $W\big(\mu^1, \delta_y\big)\le r^2$
 and
\begin{equation}\label{robust}
S(\mu^1)-2S(\mu^0)+S(\mu^{-1})\le \overline K(r)\cdot r^2.
\end{equation}
The function $\overline K$ is called the approximation function.
\end{definition}

\begin{theorem}
Let $(M,g,f)$ be a weighted Riemannian manifold  with Riemannian curvature bounded 
by $\sigma$ and injectivity radius bounded from below by $\delta>0$.

Then the mm-space $(X,d,m)$  induced by $(M,g,f)$ has a robust synthetic upper Ricci bound $K$ if and only if $$\Ric_f(\xi,\xi)\le K\cdot |\xi|^2\qquad(\forall \xi\in TM).$$
In this case, the approximation function can be chosen as 
$$\overline K(r)=K+\sigma^2\, r^2$$
for  all $r\in (0,R)$ with  suitable $R=R(\sigma,\delta)>0$ 
and $\overline K(r)=\infty$ else.
\end{theorem}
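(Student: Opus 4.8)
The plan is to prove both implications through the second-order expansion of the entropy along a $W$-geodesic, namely $S(\mu^1)-2S(\mu^0)+S(\mu^{-1})=\int_{-1}^1(1-|a|)\,\partial_a^2 S(\mu^a)\,da$, combined with the Bochner--Otto identity $\partial_a^2 S(\mu^a)=\int_M\bigl(\|\D^2\phi^a\|_{2,2}^2+\Ric_f(\nabla\phi^a,\nabla\phi^a)\bigr)\,d\mu^a$ for the Kantorovich potentials $\phi^a$ associated with the geodesic, where only ``$\ge$'' is needed when $\phi^a$ is merely $c$-concave (cf.\ \cite{Sturm04}, \cite{AGS14Duke}, \cite{KoSt}); here $\int_M|\nabla\phi^a|^2\,d\mu^a=W(\mu^0,\mu^1)^2=:v^2$ does not depend on $a$ since the metric speed of a geodesic is constant.

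\emph{The implication $\Ric_f\le K\cdot g\Rightarrow$ robust bound.} Fix $r<R(\sigma,\delta)$ small, $x,y$ with $d(x,y)=r$, and $\mu^0$ with $W(\mu^0,\delta_x)<r^4$; one may assume $S(\mu^0)<\infty$. I would build the required geodesic exactly as in the proof of Theorem~\ref{thm-sharp}: let $\xi\in T_xM$ be the unit vector with $\exp_x(r\xi)=y$, pass to normal coordinates at $x$ with $e_1=\xi$, and take $\phi^0$ to be a globally defined, uniformly semiconcave function with $|\nabla\phi^0|\le 2r$ and $\|\D^2\phi^0\|\le C(\sigma,\delta)\,r$ that coincides near $x$ with $r\,x_1$, so that $\nabla\phi^0(x)=r\xi$ and $\D^2\phi^0(x)=0$. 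For $r$ small the Hopf--Lax flow $(\phi^a)_{a\in[-1,1]}$ has no shocks and the maps $F^a(z)=\exp_z(a\nabla\phi^0(z))$, which move points by at most $2r<\delta$, are optimal, so $\mu^a:=(F^a)_\sharp\mu^0$ is a $W$-geodesic with midpoint $\mu^0$; since $F^1(x)=y$ and $F^1$ is $2$-Lipschitz, $W(\mu^1,\delta_y)\le 2W(\mu^0,\delta_x)<2r^4\le r^2$. Now $\Ric_f(\nabla\phi^a,\nabla\phi^a)\le K|\nabla\phi^a|^2$ gives $\int_{-1}^1(1-|a|)\int\Ric_f(\nabla\phi^a,\nabla\phi^a)\,d\mu^a\,da\le Kv^2=Kr^2+O(r^{10})$ (using $|\nabla\phi^0|^2=r^2g^{11}=r^2+O(\sigma r^2 d(\cdot,x)^2)$ and $W(\mu^0,\delta_x)<r^4$), while the Riccati inequality from the proof of Theorem~\ref{thm-sharp}, which here reads $\partial_a h\le\sigma r^2+h^2$, $h(0)=0$ (the factor $r^2=|\nabla\phi^0|^2$ appearing because $\nabla\phi^0$ now has length $\approx r$ in place of $1$), yields $\|\D^2\phi^a\|_{2,2}\le r\sqrt\sigma\,\tan(a r\sqrt\sigma)+O(\sigma r\,d(\cdot,x))$ and hence $\int_{-1}^1(1-|a|)\int\|\D^2\phi^a\|_{2,2}^2\,d\mu^a\,da\le\tfrac23\sigma^2r^4+O(r^{10})$. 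Adding, $S(\mu^1)-2S(\mu^0)+S(\mu^{-1})\le Kr^2+\sigma^2r^4=(K+\sigma^2r^2)r^2$ for $r<R$, which is \eqref{robust} with $\overline K(r)=K+\sigma^2r^2$.

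\emph{The converse.} Assume the robust bound with approximation function $\overline K$ and fix a unit $\xi\in T_xM$. For small $r$ put $y=\exp_x(r\xi)$ and apply the robust bound to a fixed smooth $\mu^0$ supported in $B_{r^5}(x)$ with $S(\mu^0)<\infty$: it produces a $W$-geodesic $(\mu^a)_{a\in[-1,1]}$ with $W(\mu^1,\delta_y)\le r^2$ and $S(\mu^1)-2S(\mu^0)+S(\mu^{-1})\le\overline K(r)r^2$; since $S(\mu^0)<\infty$ this forces $S(\mu^{\pm1})<\infty$, so $\mu^a$ is absolutely continuous for $a\in(-1,1)$ ($M$ being non-branching) and the Bochner inequality yields
\[
\overline K(r)r^2\ \ge\ \int_\Gamma\Bigl[\int_{-1}^1(1-|a|)\,\Ric_f(\dot\gamma^a,\dot\gamma^a)\,da\Bigr]\,d\Lambda(\gamma),
\]
with $\Lambda$ the disintegration of the geodesic over geodesics $\gamma\colon[-1,1]\to M$, $\gamma^0\in B_{r^5}(x)$ ($\Lambda$-a.s.). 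I would split $\Lambda$ according to whether $d(\gamma^1,y)\le r^{3/2}$. On the ``good'' part, $\gamma^0,\gamma^1$ lie $r^5$-, resp.\ $r^{3/2}$-close to $x,y$, so (after rescaling) $\gamma$ is $O(\sqrt r)$-close in $C^1$ to $t\mapsto\exp_x(tr\xi)$ and $\int_{-1}^1(1-|a|)\Ric_f(\dot\gamma^a,\dot\gamma^a)\,da\ge(\Ric_f(\xi,\xi)-\omega(r))\,d(\gamma^0,\gamma^1)^2$ with $\omega(r)\to0$; on the ``bad'' part one uses the ambient lower bound $\Ric_f\ge K'\cdot g$ (valid by bounded geometry), and the estimate $\int d(\cdot,y)^2\,d\mu^1\le r^4$ bounds both the $\Lambda$-mass and the action of the exceptional geodesics by $O(r^3)$, the total action being $\int d(\gamma^0,\gamma^1)^2\,d\Lambda=W(\mu^0,\mu^1)^2=r^2+O(r^3)$. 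Combining, $\overline K(r)r^2\ge\Ric_f(\xi,\xi)\,r^2-o(r^2)$, so letting $r\to0$ gives $\Ric_f(\xi,\xi)\le\lim_{r\to0}\overline K(r)=K$, i.e.\ $\Ric_f\le K\cdot g$.

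\emph{Main obstacle.} The delicate point in both directions is that the data are only controlled in $W_2$: in the first direction $\mu^0$ need not be supported in a tiny ball around $x$, so one must arrange, via the global uniformly semiconcave extension of $\phi^0$ and the \emph{global} hypothesis $\Ric_f\le K\cdot g$, that the $O(r^8)$ of mass escaping the chart contributes only $O(r^{10})$ to the entropy second difference; in the second direction $\mu^1$ is not under our control and may carry a little mass far from $y$, so the geodesic produced by the robust bound is only ``mostly'' the expected one, and the quantitative splitting above — in which $\int d(\cdot,y)^2\,d\mu^1\le r^4$ controls simultaneously the mass and the action of the exceptional geodesics — is precisely what makes the direction-of-$\dot\gamma$ computation legitimate. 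Routine preliminaries to settle first are the convention for \eqref{robust} when $S(\mu^0)=\infty$ (one may then take $S(\mu^0)<\infty$, since otherwise the inequality is either vacuous or forces $S(\mu^{\pm1})=\infty$ as well) and the justification of the identity, resp.\ inequality, for $\partial_a^2 S(\mu^a)$ and of its integration against the kernel $(1-|a|)$.
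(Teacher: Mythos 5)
Your two implications follow the paper's split, but the methods differ in detail, and there are a few places where your accounting is looser than the paper's.

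\textbf{Forward direction ($\Ric_f\le Kg\Rightarrow$ robust bound).} Your construction is essentially the paper's, reparametrized to $[-1,1]$ with $|\nabla\phi^0|\approx r$ instead of $[-r,r]$ with $|\nabla\phi^0|\approx1$; the Bochner/Otto identity plus the Riccati control of $\D^2\phi^a$ is the same engine. Where you differ is in the error accounting. The paper splits $\mu^0=\hat\mu^0+\check\mu^0$ by the ball $B_{r^2/4}(x)$, exploits $\|\Hess\phi\|\le Cr$ only near the geodesic and $\le C$ globally, and bounds the far part by Chebyshev (mass $\le16r^4$); you instead assert a \emph{pointwise} bound $\|\D^2\phi^a\|\lesssim r\sqrt\sigma\tan(ar\sqrt\sigma)+O(\sigma r\,d(\cdot,x))$ and integrate it against $\mu^a$. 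This is plausible but requires care: the bound has to be propagated along the transport maps $F^a$ (i.e.\ $\|\D^2\phi^a\|(F^a(z))$ is controlled by $\|\D^2\phi^0\|(z)$ through the Riccati ODE, not by $\|\D^2\phi^0\|(F^a(z))$), and the $d(\cdot,x)$-linear term must be shown to vanish at $x$, which forces you to implicitly reintroduce a near/far split. Your constant $\tfrac23\sigma^2r^4$ is also not what a careful accounting gives (the kernel $\int_{-1}^1(1-|a|)a^2\,da=\tfrac16$, and there are cross-terms); that is harmless for the assertion ``robust bound with $\lim\overline K=K$'' but it does not literally produce $\overline K(r)=K+\sigma^2r^2$. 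Note the paper itself lands on $K+2\sigma^2r^2$, so the announced constant is cosmetic in both treatments. Your Lipschitz argument for $W(\mu^1,\delta_y)\le 2W(\mu^0,\delta_x)$ is clean and slightly more efficient than the paper's three-term estimate of $W^2(\mu^r,\delta_y)$.

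\textbf{Converse ($K$ robust $\Rightarrow\Ric_f\le Kg$).} Here you take a genuinely different and more direct route. The paper argues by contradiction: it chooses $\mu^0$ uniform on $B_{r^4}(x)$, asserts (from ``boundedness of the Riemannian curvature'') that $\supp\mu^r\subset B_{2r^2}(y)$, and then invokes the local $\mathrm{CD}(K+\epsilon,\infty)$ midpoint-convexity inequality to contradict \eqref{robust}. You instead disintegrate the geodesic produced by the robust bound over individual $d$-geodesics $\Lambda$, split $\Lambda$ by whether $d(\gamma^1,y)\le r^{3/2}$, use the second-order expansion on the ``good'' part and the a priori lower bound $\Ric_f\ge K'g$ on the ``bad'' part, and control both the bad mass and its contribution to the action by $O(r^3)$ from $\int d(\cdot,y)^2\,d\mu^1\le r^4$. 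This is worth highlighting: the paper's step $\supp\mu^r\subset B_{2r^2}(y)$ is not obviously forced by $W(\mu^r,\delta_y)\le r^2$ alone (a $W_2$-bound only controls second moments, so a little mass may escape), and your good/bad split handles exactly the exceptional geodesics that could evade that support claim. In other words, on this direction your blind proof is more cautious than the paper's, at the cost of being longer.

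\textbf{Minor points.} (i) The step ``$S(\mu^0)<\infty$ forces $S(\mu^{\pm1})<\infty$'' implicitly uses a uniform lower bound on the entropy of measures with bounded second moment; under bounded geometry this holds, but it should be said. (ii) Absolute continuity of interior points of a Wasserstein geodesic with finite-entropy endpoints is a fact about $\mathrm{RCD}$ spaces, not just non-branching; you want the former. (iii) The Otto identity $\partial_a^2S(\mu^a)=\int(\|\D^2\phi^a\|^2+\Ric_f(\nabla\phi^a,\nabla\phi^a))\,d\mu^a$ is an \emph{equality} only for the smooth transport you construct in the forward direction; for the converse it is the one-sided Bochner estimate, exactly as you note.
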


\begin{proof}
{\it  ``Only if''-implication.} Assume that $\Ric_f\le K\cdot g$ is not true. That is, $\Ric_f(\xi,\xi)\ge K+2\epsilon$
for some $x\in X$, some $\xi\in T_xM$ with $|\xi|=1$, and some $\epsilon>0$. Smoothness of the data implies that there exists $r_0>0$ such that
$\Ric_f(\dot\gamma,\dot\gamma)\ge K+\epsilon$ for all unit speed geodesics $(\gamma^a)_{[-r,r]}$ with $\gamma^0\in B_{r^4}(x)$ and $\gamma^r\in B_{2r^2}(\exp_x(r\xi))$ for $r<r_0$. 
Now consider any unit speed $W$-geodesic $(\mu^a)_{[-r,r]}$ with $\mu^0$ being the uniform distribution in $B_{r^4}(x)$ and $W(\mu^r,\delta_y)\le r^2$ where $y=\exp_x(r\xi)$. Boundedness of the Riemannian curvature implies that
$\supp[\mu^r]
\subset B_{2r^2}(\exp_x(r\xi))$
provided $r$ is small enough.
Thus 
$$2S(\mu^{0})\le S(\mu^{-r})+S(\mu^r)-\frac{K+\epsilon}4W^2(\mu^{-r},\mu^r).$$ 
This contradicts \eqref{robust}.

{\it  ``If''-implication.} Let $(M,g,f)$ be given with $K$, $\sigma$ and $\delta$ as above.
Then there exist constants $R=R(\sigma,\delta)>0$ and $C=C(\sigma,\delta)$ such that $\forall r<R, \forall x\in M, v\in T_xM$ with $|v|=1$: $\exists \phi\in {\mathcal C}^2_{comp}(M)$ with the following properties
\begin{enumerate}
\item 
$\supp[\phi]\subset B_{3r}(x)$,
$\nabla\phi(x)=v$,
$|\nabla \phi|\le 1$ on $M$,
$|\Hess \phi|\le C$ on $M$
\item
$|\Hess \phi|\le C r$ in the $r^2$-neighborhood of the geodesic $\big( \exp_x(s\nabla\phi)\big)_{s\in[-r,r]}$
\item for $s\in(-r,r)$,
the map $F_s: z\mapsto \exp_z(s\nabla\phi)$ maps 
$B_{r^2/4}(x)$ into $B_{r^2/2}\big(F_s(x)\big)$.
\end{enumerate}
Indeed, we can choose $R=(\frac{\delta\wedge 1}6)^{4}$
and $\phi$ to be a  smoothened and truncated modification (supported within $B_{3r}(x)$) of  the function
$\phi_0(z)=d(z,o_+)-d(z,o_-)$ where $o_\pm=\exp_x(\pm R^{1/4} v)$.
Observe that $\nabla\phi_0(x)=v$,
$|\nabla \phi_0|\le 1$ on $M$ and by comparison geometry
$$\sqrt\sigma\cot(\sqrt\sigma t_+(z))- \sqrt\sigma\coth(\sqrt\sigma t_-(z))\le \Hess \phi_0
\le \sqrt\sigma\coth(\sqrt\sigma t_+(z))- \sqrt\sigma\cot(\sqrt\sigma t_-(z))
$$ for all $z\in M$
where $t_\pm(z)=d(z,o_\pm)$.
Thus for $z\in B_{r^2}\big(\bigcup_{s\in[-r,r]}F_s(x)\big)$ and sufficiently small $R$ (i.e. $R<R_0(\sigma)$)
\begin{eqnarray*}|\Hess \phi_0|&\le& 
\Big|\frac1{t_+(z)}-\frac1{t_-(z)}\Big|+ \sigma \big|t_+(z)-t_-(z)\big|\\
&\le& 
\frac1{R^{1/4}+s-r^2}-\frac1{R^{1/4}+s+r^2}+ 2\sigma (r+ r^2)\\
&\le&
\frac{2r^2}{(R^{1/4}-r)^2-r^4}+ 2\sqrt\sigma (r+ r^2)\le C r.
\end{eqnarray*}
Finally note  that for $s>0$ the image of the ball $B_{r^2/4}(x)$ under $F_s$ is contained in 
the image under the map 
$z\mapsto \exp_z (s\nabla d(.,o_-))$
which in turn (again by comparison geometry) is contained in the ball around $F_s(x)$ with radius
$$\rho=\sinh(\sqrt\sigma( \sinh^{-1}(r^2/(4\sqrt\sigma)))\le r^2/2$$
for $r$ sufficiently small (i.e. $r<R_1(\sigma)$).

\medskip

Given a probability measure $\mu^0$ with $W\big(\mu^0,\delta_x)<r^4$, define the $W$-geodesic
$(\mu^a)_{a\in[-r,r]}$ by
$$\mu^a=\big( \exp(a\phi^0)\big)_* \,\mu^0.$$
We decompose this geodesic of probability measures into two geodesics of subprobability measures $\mu^a=\hat\mu^a+\check\mu^a=\big( \exp(a\phi^0)\big)_* \,\hat\mu^0+\big( \exp(a\phi^0)\big)_* \,\check\mu^0$ where
$\hat\mu^0=1_{B_{r^2/4}(x)}\mu^0$ and $\check\mu^0=1_{X\setminus B_{r^2/4}(x)}\mu^0$.

Then
$c:=\check\mu^r(X)=\check\mu^0(X)\le W^2\big(\mu^0,\delta_x)/(r^2/4)^2\le 16 r^4$
and 
the subprobability measure $\hat\mu^r$ will be supported in $B_{r^2/2}(y)$. 
Moreover, $W^2\big(\check\mu^r,c\delta_y)\le 3W^2\big(\check\mu^0,c\delta_x)+6cr^2\le 3r^8+6\cdot16 r^4\cdot r^2$
and thus
$$W^2\big(\mu^r,\delta_y)\le 3r^8+96 r^6+r^2/2\le r^2.$$

Moreover, following the calculations in the proof of Theorem \ref{thm-sharp}, we obtain for a.e.\ $s\in [0,r]$
\begin{eqnarray*}
\partial_a S(\hat\mu^a)\big|_{a=s}-\partial_a S(\hat\mu^a)\big|_{a=-s}
&=&
\int\Delta_f\phi^{-s}d\hat\mu^{-s}-\int\Delta_f\phi^{s}d\hat\mu^{s}\\
&=&\int_{-s}^s\int\Big[
\Ric_f(\nabla\phi^a,\nabla\phi^a)+\big\|\D^2\phi^a\big\|_{2,2}^2\Big]d\hat\mu^a\,da\\
&\le& 2s\cdot\big(K+\sigma\tan^2(\sqrt\sigma (s+c_0))\big)\cdot \hat\mu^0(X)
\end{eqnarray*}
with $c_0:=r^2/\sigma$ such that
$\big\|\D^2\phi^0\big\|_{2,2}\le\sqrt\sigma\tan(\sqrt\sigma \, c_0)$ on 
 $B_{r^2}\big(\bigcup_{s\in[-r,r]}F_s(x)\big)$.
 
 Similarly,
 \begin{eqnarray*}
\partial_a S(\check\mu^a)\big|_{a=s}-\partial_a S(\check\mu^a)\big|_{a=-s}
&\le& 2s\cdot\big(K+\sigma\tan^2(\sqrt\sigma (s+c_1))\big)\cdot \check\mu^0(X)
\end{eqnarray*}
with $c_1:=C/\sigma$ such that
$\big\|\D^2\phi^0\big\|_{2,2}\le\sqrt\sigma\tan(\sqrt\sigma \, c_1)$ on $M$.

Thus
\begin{eqnarray*}
S(\hat\mu^r)+S(\hat\mu^{-r})-2S(\hat\mu^0)&=&
\int_0^r\Big[
\partial_a S(\hat\mu^a)\big|_{a=s}-\partial_a S(\hat\mu^a)\big|_{a=-s}
\Big]ds\\
&\le&r^2\cdot\big(K+\sigma\tan^2(\sqrt\sigma (r+r^2/\sigma))+16r^4
\sigma\tan^2(\sqrt\sigma (r+C/\sigma))
\big)\\
&\le& r^2\cdot (K+2\sigma^2r^2)
\end{eqnarray*}
for $r<R_2(\sigma, C)$.
This proves the claim.
\end{proof}

\begin{theorem} Let $\overline K$  be given with $\lim_{r\to0}\overline K(r)=K$.
Assume that  $(X_n,d_n,m_n), n\in\N$, is a sequence of mm-spaces with normalized volume and the properties
\begin{itemize}
\item for each $n$, the mm-space $(X_n,d_n,m_n)$ has robust synthetic upper Ricci bound $K$ with approximation function $\overline K$;
\item for $n\to\infty$, the mm-spaces $(X_n,d_n,m_n)$ converge to a locally compact mm-space $(X,d,m)$ w.r.t. the metric $\mathbb D=\mathbb D_2$ introduced in \cite{SturmActa1}.
\end{itemize}
Then the mm-space $(X,d,m)$ 
 has robust synthetic upper Ricci bound $K$ with approximation function $\overline K$.
\end{theorem}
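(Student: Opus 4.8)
The plan is to transport the data given on $(X,d,m)$ back to the approximating spaces $(X_n,d_n,m_n)$, invoke the hypothesis there, and pass to the limit. The analytic ingredients needed --- a construction of recovery measures with controlled relative entropy, lower semicontinuity of the relative entropy, continuity of $W$, and compactness together with stability of Wasserstein geodesics, all under $\mathbb{D}_2$-convergence onto a locally compact limit --- are precisely those developed in \cite{SturmActa1}. Concretely, since $\lim_{r\to0}\overline K(r)=K$ is assumed, it suffices to verify the defining inequality for $(X,d,m)$ with the \emph{same} $\overline K$: fix $r>0$ with $\overline K(r)<\infty$, points $x,y\in X$ with $d(x,y)=r$, and $\mu^0\in\Pz(X)$ with $W(\mu^0,\delta_x)<r^4$, and produce a $W$-geodesic $(\mu^a)_{a\in[-1,1]}$ with $W(\mu^1,\delta_y)\le r^2$ and $S(\mu^1)-2S(\mu^0)+S(\mu^{-1})\le\overline K(r)\,r^2$. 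One may assume $S(\mu^0)<\infty$, so $\mu^0=\rho^0 m$.

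For each large $n$ fix, realizing $\mathbb{D}_2\big((X_n,d_n,m_n),(X,d,m)\big)\to0$, a metric $\hat d_n$ on $X_n\sqcup X$ extending $d_n$ and $d$ together with a coupling $q_n$ of $m_n$ and $m$ with $\int\hat d_n^2\,dq_n\to0$. Since $m$ has full support, one may pick $x_n,y_n\in X_n$ with $\hat d_n(x_n,x)\to0$ and $\hat d_n(y_n,y)\to0$; then $r_n:=d_n(x_n,y_n)\to r$, and after a small adjustment of $x_n,y_n$ one may assume $\overline K(r_n)<\infty$ (automatic when $\{\overline K<\infty\}$ is a neighbourhood of $r$, as in the cases of interest). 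Let $\mu^0_n$ be the push-forward to $X_n$ of the measure $\rho^0(x)\,q_n(dx_n,dx)$ on $X_n\times X$; then $\mu^0_n\ll m_n$ with a density obtained from $\rho^0$ by conditional expectation along $q_n$, so Jensen's inequality yields $S_n(\mu^0_n)\le S(\mu^0)$, where $S_n:=\Ent(\,\cdot\,|m_n)$, while $\mu^0_n\to\mu^0$ and $W_n(\mu^0_n,\delta_{x_n})\to W(\mu^0,\delta_x)$ (immediate for bounded $\rho^0$, the general case by truncation). In particular $W_n(\mu^0_n,\delta_{x_n})<r_n^4$ for $n$ large.

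The hypothesis on $(X_n,d_n,m_n)$ then produces a $W_n$-geodesic $(\mu^a_n)_{a\in[-1,1]}$ with $W_n(\mu^1_n,\delta_{y_n})\le r_n^2$ and
\[
S_n(\mu^1_n)-2S_n(\mu^0_n)+S_n(\mu^{-1}_n)\ \le\ \overline K(r_n)\,r_n^2 .
\]
Since $\tfrac12 W_n(\mu^{-1}_n,\mu^1_n)=W_n(\mu^0_n,\mu^1_n)\le W_n(\mu^0_n,\delta_{x_n})+d_n(x_n,y_n)+W_n(\delta_{y_n},\mu^1_n)$ stays bounded, the curves $a\mapsto\mu^a_n$ are equi-Lipschitz and, remaining within a bounded $W$-distance of the tight family $\{\mu^0_n\}$, uniformly tight with uniformly controlled second moments. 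By the stability of Wasserstein geodesics under $\mathbb{D}_2$-convergence onto the locally compact space $X$ \cite{SturmActa1}, a subsequence of $(\mu^a_n)_{a\in[-1,1]}$ converges uniformly to a constant-speed $W$-geodesic $(\mu^a)_{a\in[-1,1]}$ in $(\Pz(X),W)$, whose midpoint equals $\lim_n\mu^0_n=\mu^0$. Lower semicontinuity of $W$ gives $W(\mu^1,\delta_y)\le\liminf_n W_n(\mu^1_n,\delta_{y_n})\le\lim_n r_n^2=r^2$; lower semicontinuity of the relative entropy gives $S(\mu^{\pm1})\le\liminf_n S_n(\mu^{\pm1}_n)$; $S_n(\mu^0_n)\le S(\mu^0)$ holds by construction; and $\limsup_n\overline K(r_n)\,r_n^2\le\overline K(r)\,r^2$ by upper semicontinuity of $\overline K$ and $r_n\to r$. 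Hence
\[
S(\mu^1)-2S(\mu^0)+S(\mu^{-1})\ \le\ \liminf_n\big[S_n(\mu^1_n)-2S_n(\mu^0_n)+S_n(\mu^{-1}_n)\big]\ \le\ \overline K(r)\,r^2 ,
\]
which is \eqref{robust}. This shows that $(X,d,m)$ has robust synthetic upper Ricci bound $K$ with approximation function $\overline K$.

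Given the three facts quoted from \cite{SturmActa1}, the argument is essentially bookkeeping; the genuinely delicate point is that the asymmetric structure of the definition --- midpoint $\mu^0$ essentially prescribed, one endpoint $\mu^1$ prescribed up to $r^2$, the other endpoint $\mu^{-1}$ free --- must survive the passage to the limit. The free endpoint $\mu^{-1}_n$ has to be kept tight, which is what the geodesic-speed bound in the third step achieves, and the several powers of $r$ entering the thresholds ($r^4$, $r^2$, $r^2\overline K$) must be tracked carefully; it is for this reason that replacing $r_n\equiv r$ by $r_n\to r$ is harmless only because $\overline K$ is upper semicontinuous. (The only other point worth checking is the reduction to $S(\mu^0)<\infty$, which is routine.)
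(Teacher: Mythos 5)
Your proof is correct and follows essentially the same strategy as the paper's: transport $\mu^0$, $x$, $y$ back to the approximating spaces $X_n$, invoke the robust bound there to produce $W_n$-geodesics $(\mu^a_n)_{a\in[-1,1]}$, and pass to the limit using compactness of Wasserstein geodesics under $\mathbb D_2$-convergence together with lower semicontinuity of the entropy and of $W$. The only differences are cosmetic: you get the one-sided comparison $S_n(\mu^0_n)\le S(\mu^0)$ directly from Jensen applied to the disintegration of the optimal coupling $q_n$, whereas the paper invokes the transportation-map construction of \cite{SturmActa1} and asserts $S_n(\mu^0_n)\to S(\mu^0)$; and you collapse the paper's two-parameter limit (first $n\to\infty$ for fixed tolerance $\epsilon$, producing intermediate geodesics $\mu^{\epsilon,\cdot}$, then $\epsilon\to 0$) into a single limit with $r_n\to r$. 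One small remark: your parenthetical caveat about arranging $\overline K(r_n)<\infty$ is unnecessary, since upper semicontinuity of $\overline K$ together with $\overline K(r)<\infty$ already forces $\overline K<\infty$ on a neighbourhood of $r$, hence at $r_n$ for large $n$.
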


Recall that $\mathbb D$-convergence means that
 for each $n$ there exists an isometric embedding of $(X_n,d_n)$
and $(X,d)$ into a mm-space $(\hat X_n,\hat d_n)$ such that
$$\hat W_{n}(\hat m_n,\hat m)\to0$$
as $n\to\infty$ where $\hat W_n$ denotes the $L^2$-Kantorovich-Wasserstein distance derived from the metric $\hat d_n$ and $\hat m_n,\hat m$ denote the push forwards of the measures $m_n,m$ by the embedding maps.

In particular, recall that mGH-convergence implies $\mathbb D$-convergence.

\begin{proof}
Given $r,x,y$ and $\mu^0$ as requested on $X$, choose $\epsilon>0$ such that 
$W(\mu^0,\delta_x)\le (r-\epsilon)^4-4\epsilon$.
For each $n$, define the measure $\mu^0_n$ on $X_n$ by means of the transportation map from \cite{SturmActa1}. Then for $n$ large enough
$\hat W_n(\mu^0,\mu^0_n)\le\epsilon$.
To simplify notation, here and in the sequel we will identify the measures $\mu^0$ and $\mu^0_n$ with their push forwards $\hat\mu^0$ and $\hat\mu^0_n$ under the embedding maps.

Put $\delta_n=\mathbb D\big(X,X_n\big)$ and
$v=m(B_\epsilon(y)$.
Then
$$\inf_{(z,z_n)\in B_\epsilon(y)\times X_n} \hat d^2(z,z_n)\cdot v\le \delta_n^2.$$
Thus there exists $y_n\in X_n$ with $\hat d_n(y,y_n)\le \epsilon+\frac1{\sqrt v}\delta_n$.
And for sufficiently large $n$ therefore $\hat d_n(y,y_n)\le 2\epsilon$.
Similarly, there exists $x_n\in X_n$ with $\hat d_n(x,x_n)\le 2\epsilon$ for sufficiently large $n$.

Put $r_n=d_n(x_n,y_n)$. Then $r-4\epsilon\le r_n\le r+4\epsilon$ and thus
$$W_n\big(\mu^0_n,\delta_{x_n}\big)\le
\hat W_n\big(\mu^0_n,\mu^0\big)+W\big(\mu^0,\delta_{x}\big)+\hat d_n(x,x_n)\le
\epsilon+[(r-4\epsilon)^4-4\epsilon]+2\epsilon<r_n^4.$$
The assumption of robust curvature for $(X_n,d_n,m_n)$ thus implies that  there exists a $W_n$-geodesic $(\mu_n^a)_{a\in[-1,1]}$ in $\Pz(X_n)$,
depending on the choice of $\epsilon$ (but for the moment we suppress this dependence in the notation),
such that
\begin{equation}
\frac1{r_n^2}\Big(S_n(\mu_n^1)-2S_n(\mu_n^0)+S_n(\mu_n^{-1})\Big)\le \overline K(r_n).
\end{equation}
and
$W_n\big(\mu^1_n,\delta_{y_n}\big)\le r_n^2\le (r+4\epsilon)^2$
which implies
$$\hat W_n\big(\mu^1_n,\delta_{y}\big)\le  (r+4\epsilon)^2+2\epsilon.$$

By local comapctness of $X$ and $W$-boundedness of the  involved measures, for $n\to\infty$ the  geodesics $(\mu_n^a)_{a\in[-1,1]}$ in $\Pz(X_n)$ will converge (via transport maps and/or embedding maps) to a geodesic $(\mu^a)_{a\in[-1,1]}$ in $\Pz(X)$ with the orginally given $\mu^0$ as its midpoint and with 
$W\big(\mu^1, \delta_y\big)\le (r+4\epsilon)^2+2\epsilon$.
More precisely,  for each $\epsilon>0$ there exists such a geodesic
$(\mu^{\epsilon,a})_{a\in[-1,1]}$ in $\Pz(X)$. Passing again to a suitable subsequential limit we obtain a geodesic  $(\mu^a)_{a\in[-1,1]}$ in $\Pz(X)$ with the originally given $\mu^0$ as its midpoint and with 
$W\big(\mu^1, \delta_y\big)\le r^2$.

By construction, 
$S_n(\mu_n^0)\to S(\mu^0)$
and thus by lower semicontinuity of $n\mapsto S_n(\mu_n^{\epsilon,a})$ and 
 $\epsilon\mapsto S(\mu^{\epsilon,a})$ for  $a\in \{-1,1\}$
\begin{eqnarray*}
\frac1{r^2}\Big(S(\mu^1)-2S(\mu^0)+S(\mu^{-1})\Big)&\le&
\liminf_{\epsilon\to0}
\frac1{r^2}\Big(S(\mu^{\epsilon,1})-2S_n(\mu_n^0)+S(\mu^{\epsilon,-1})\Big)\\
&\le&
\liminf_{\epsilon\to0} \ \liminf_{n\to\infty}
\frac1{r_n^2}\Big(S_n(\mu_n^{\epsilon,1})-2S_n(\mu_n^0)+S_n(\mu_n^{\epsilon,-1})\Big)\\
&\le&\liminf_{n\to\infty} \overline K(r_n)\le \overline K(r).
\end{eqnarray*}
This proves the claim.
\end{proof}

\begin{remark} 
The Eguchi-Hanson metric \cite{Eguchi-Hanson} is a Ricci flat metric on $T^*S^2$, the cotangent bundle of the 2-sphere,  with the property that  blowdown sequences converge  
to $\R^4/\Z_2$, the orbifold obtained by identifying  antipodal points, or in other words, to the metric cone on $\R{\mathbb P}(3)$.
The convergence is in the pointed Gromov-Hausdorff sense and  away from the isolated point singularity it is also in the smooth Cheeger-Gromov sense.
The limit space is a  mm-space with a conical singularity and outside of which it is flat. 
According to \cite{ErSt}, no (weak or strong) synthetic upper Ricci bounds for this mm-space will exist. 

Moreover, the limit space also admits \emph{no robust upper bound} for the Ricci curvature.
Actually, the unbounded positive curvature in the singular points of the limit space, will not be detected by non-semiconcavity of the entropy  along optimal transports. However, the concept of robust Ricci bounds in addition requires a certain kind of extendability of geodesics which will be violated for the limit space. 

Thus this might look like a counterexample to our stability result for synthetic upper Ricci bounds.
Our stability result, however, requires that the approximating spaces satisfy  a  kind of extendability of geodesics (more precisely, extendability of Wasserstein geodesics) to a \emph{uniform extent} which will be violated in this case for geodesics through the origin.
\end{remark}

\end{document}